\documentclass[12pt]{amsart}

\usepackage[paperwidth=21cm,paperheight=29.7cm,left=2cm,top=2cm]{geometry}

\usepackage{mathscinet,amssymb,latexsym,stmaryrd,phonetic}
\usepackage{enumerate,mathrsfs,hyperref,cmll,color}
\usepackage{cite}
\usepackage{amsthm}
\usepackage{amsmath}

\usepackage{tikz}
\usetikzlibrary{matrix, arrows, patterns}

\newtheorem{theorem}{Theorem}[section]
\newtheorem{proposition}[theorem]{Proposition}
\newtheorem{lemma}[theorem]{Lemma}

\newtheorem{question}[theorem]{Question}
\newtheorem{corollary}[theorem]{Corollary}

\newtheorem{example}[theorem]{Example}
\newtheorem{definition}[theorem]{Definition}

\theoremstyle{definition}
\newtheorem{remark}[theorem]{Remark}

\makeatletter
\@namedef{subjclassname@2020}{\textup{2020} Mathematics Subject Classification}
\makeatother


\DeclareMathOperator{\Aut}{Aut}
\DeclareMathOperator{\MCG}{Map}
\DeclareMathOperator{\PMCG}{PMap}

\DeclareMathOperator{\Homeo}{Homeo}
\DeclareMathOperator{\supp}{supp}
\newcommand{\g}{\mathfrak{g}}

\newcommand{\R}{\mathbb{R}}

\newcommand{\Q}{\mathbb{Q}}
\newcommand{\Z}{\mathbb{Z}}
\newcommand{\N}{\mathbb{N}}

\newcommand{\bd}{\begin{displaymath}}
\newcommand{\ed}{\end{displaymath}}
\newcommand*{\pmap}[1]{\mathrm{PMap}(#1)}
\newcommand*{\pmapc}[1]{\mathrm{PMap}_{c}(#1)}

\parskip=3mm

\usepackage[utf8]{inputenc}

\title{Conjugacy classes of big mapping class groups}

\author[Hernández Hernández]{Jesús Hernández Hernández}
\address{Jesús Hernández Hernández \newline
Centro de Ciencias Matemáticas, UNAM Campus Morelia, Antigua Carretera a Pátzcuaro \#8701, Col.~Ex Hacienda San José de la Huerta, C.P.~58089, Morelia, Michoacán, México}
\email{jhdez@matmor.unam.mx}
\author[Hru\v{s}\'ak]{Michael Hru\v{s}\'ak}
\address{Michael Hru\v{s}\'ak \newline
Centro de Ciencias Matemáticas, UNAM Campus Morelia, Antigua Carretera a Pátzcuaro \#8701, Col.~Ex Hacienda San José de la Huerta, C.P.~58089, Morelia, Michoacán, México}
\email{michael@matmor.unam.mx}
\author[Morales]{Israel Morales}
\address{Israel Morales \newline
Instituto de Matemáticas, UNAM Unidad Oaxaca, Antonio de León \#2, altos, Col.~Centro, Oaxaca de Juárez, C.P.~68000, México}
\email{fast.imj@gmail.com}
\author[Randecker]{Anja Randecker}
\address{Anja Randecker \newline
Mathematisches Institut, Universit\"at Heidelberg, Im Neuenheimer Feld 205, 69120 Heidelberg, Germany}
\email{arandecker@mathi.uni-heidelberg.de}
\author[Sedano]{Manuel Sedano}
\address{Manuel Sedano \newline
Centro de Ciencias Matemáticas, UNAM Campus Morelia, Antigua Carretera a Pátzcuaro \#8701, Col.~Ex Hacienda San José de la Huerta, C.P.~58089, Morelia, Michoacán, México}
\email{msedano@matmor.unam.mx}
\author[Valdez]{Ferrán Valdez}
\address{Ferrán Valdez \newline
Centro de Ciencias Matemáticas, UNAM Campus Morelia, Antigua Carretera a Pátzcuaro \#8701, Col.~Ex Hacienda San José de la Huerta, C.P.~58089, Morelia, Michoacán, México}
\email{ferran@matmor.unam.mx}

\subjclass[2020]{57K20; 03E15; 20E45}

\begin{document}

\begin{abstract}
    We describe the topological behavior of the conjugacy action of the mapping class group of an orientable infinite-type surface $\Sigma$ on itself by proving that:
    \\
\begin{enumerate}
\item All conjugacy classes of $\MCG(\Sigma)$ are meager for every $\Sigma$,\\
\item $\MCG(\Sigma)$ has a somewhere dense conjugacy class if and only if $\Sigma$ has at most two maximal ends and no non-displaceable finite-type subsurfaces,\\
\item $\MCG(\Sigma)$ has a dense conjugacy class if and only if $\Sigma$ has a unique maximal end and no non-displaceable finite-type subsurfaces.\\
\end{enumerate}
Our techniques are based on model-theoretic methods developed by Kechris, Rosendal and Truss in~\cite{kechris-rosendal,truss}.
\end{abstract}

\maketitle

Let $\Sigma$ be an infinite-type\,\footnote{A surface is of finite type if its fundamental group is finitely generated and of infinite type if not.} surface and $\MCG^*(\Sigma)$ the group of homeomorphisms of $\Sigma$ modulo isotopy. This group is called the extended (big\,\footnote{This nomenclature is inspired by the fact that these groups are uncountable, in contrast to the countable $\MCG(S)$ when $S$ is a finite-type surface.}) mapping class group of $\Sigma$. Each $\MCG^*(\Sigma)$ acts on the curve graph $C(\Sigma)$, which is the countable graph whose vertices are (isotopy classes of essential) simple closed curves in $\Sigma$  and adjacency is defined by disjointness. Moreover, $\MCG^*(\Sigma)$ is isomorphic to $\Aut(C(\Sigma))$~\cite{Bavard-Dowdall-Rafi,hernandez-etal}, and thus $\MCG^*(\Sigma)$ is a Polish group with respect to the permutation topology. Let $\MCG(\Sigma)$ denote the index two subgroup of $\MCG^*(\Sigma)$ formed by orientation-preserving mapping classes. This group is called the (big) mapping class group of $\Sigma$.

The main contribution of this article is the topological description of the conjugacy action of $\MCG(\Sigma)$. First, we show that conjugacy classes of $\MCG(\Sigma)$ are, from a topological point of view, negligible, i.e.~\emph{meager} which means that a subset of a space is the countable union of nowhere dense subsets.

\begin{theorem}\label{THM:All_CC_are_meager}
 Let $\Sigma$ be an infinite-type surface. Then all conjugacy classes of $\MCG(\Sigma)$ are meager.
\end{theorem}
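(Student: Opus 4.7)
The plan is as follows. A conjugacy class $[f]\subseteq\MCG(\Sigma)$ is analytic (as the continuous image of $\MCG(\Sigma)$ under $h\mapsto hfh^{-1}$), hence has the Baire property, so it is meager if and only if it is not comeager in any non-empty open subset of $\MCG(\Sigma)$. Using the isomorphism $\MCG(\Sigma)=\Aut(C(\Sigma))$, basic open sets take the form $V(\phi)=\{g\in\MCG(\Sigma):g\supseteq\phi\}$, where $\phi$ is a finite partial isomorphism of the curve graph $C(\Sigma)$ realized as the restriction of some mapping class. It therefore suffices to exhibit, inside every non-empty $V(\phi)$, a dense $G_\delta$ subset disjoint from $[f]$.

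I would construct this dense $G_\delta$ by a Baire category argument tailored to $f$. Enumerate countably many ``density conditions'' $\{T_n\}_{n\in\N}$, each of the open form \emph{``$g$ sends a specified curve $c_n$ to a specified target $d_n$''}, with $c_n,d_n$ chosen recursively so as to be disjoint from all curves already constrained by $\phi$ or by earlier conditions and so that the enlarged partial isomorphism is still realizable by some mapping class. The infinite-type hypothesis on $\Sigma$ is essential here: since $\phi$ has only finite support, there is always room to select the new curves inside topologically rich subsurfaces of $\Sigma$ disjoint from the existing data, which guarantees that each $T_n$ defines a dense open subset of $V(\phi)$. The conditions $T_n$ are engineered so that any $g$ satisfying all of them exhibits an orbit pattern on $C(\Sigma)$---for instance a periodic orbit whose complementary components have a prescribed topological type, or a finite orbit with a specific combinatorial signature---which is a conjugation invariant \emph{not} realized by $f$; consequently the dense $G_\delta$ set $\bigcap_{n\in\N}\{g\in V(\phi):g \text{ satisfies } T_n\}$ is disjoint from $[f]$, and $[f]\cap V(\phi)$ is meager.

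The main obstacle is choosing the invariant encoded by $(T_n)_{n\in\N}$ uniformly in $f$. For mapping classes whose dynamics on the curve graph are very rich, the obvious local invariants are already realized by $f$ somewhere in $\Sigma$; one must then appeal to more global features, such as the action of $f$ on the space of ends or the presence of $f$-invariant non-displaceable finite-type subsurfaces, and combine local gadgets placed in pairwise disjoint infinite-type subsurfaces of $\Sigma$ so as to obstruct simultaneous realization by any conjugate of $f$. This is where the Kechris--Rosendal--Truss machinery~\cite{kechris-rosendal,truss} is essential: in their model-theoretic framework, the statement is equivalent to the failure, at every partial isomorphism, of the Weak Amalgamation Property for the class of finite partial isomorphisms of $C(\Sigma)$, and that framework organizes the case analysis on the dynamical structure of $f$ needed to carry out the construction above.
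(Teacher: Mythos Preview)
Your reduction to the Kechris--Rosendal criterion is the right move, and the paper uses it too: by Theorem~\ref{Theorem:Criteria}, it suffices to show that the class $\mathcal F_p$ (finite partial isomorphisms of the full curve graph) fails the \emph{local} Weak Amalgamation Property. But your last paragraph misreads what that entails. Once one passes to local WAP, there is no longer any individual $f$ in the picture and hence no ``case analysis on the dynamical structure of $f$'' to organize. What must be shown is a purely combinatorial statement about $\mathcal F_p$: for every partial isomorphism $(h_0,S_0)$ and every extension $(h_1,S_1)$, one can produce two further extensions $(h_2,S_2)$, $(h_3,S_3)$ that admit no common amalgam over the base. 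Your proposal never supplies this obstruction; the ``density conditions $T_n$'' and the suggested invariants (periodic orbits, combinatorial signatures) remain schematic, and you yourself flag the ``main obstacle'' without resolving it.

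The missing idea is concrete and does not involve $f$ at all. Given $(h_1,S_1)$, the paper enlarges $S_1$ slightly into an infinite-type complementary region, takes a pants decomposition $M$ of the new piece, and sets $h_2=h_1\circ\tau_M^2$ and $h_3=h_1\circ\tau_M^3$ on $S_2=S_3=S_1\cup V$. Any amalgam $(h_4,S_4)$ would have to carry copies of $M$ that either coincide or intersect; in the first case $h_4$ would be simultaneously conjugate to a square and a cube of a Dehn twist on the same curve, and in the second the twist powers force nonzero intersection of $h_4(\delta_2)$ with $h_4(M_2)$, both contradictions. This Dehn-twist-powers trick is the entire content of the proof and is what your outline is lacking.
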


We actually prove a more general statement (see Theorem~\ref{THM:All_CC_are_meager_full_statement}): The statement above remains valid if one exchanges $\MCG(\Sigma)$ by any closed subgroup $\Gamma$ of $\MCG(\Sigma)$ which contains, for every essential simple closed curve $\alpha\subset\Sigma$, a non-trivial power of a Dehn twist  w.r.t.~$\alpha$. For a more detailed discussion about Dehn twists, see Section~\ref{SSEC:Dehn-Twists}.

Since comeager implies dense but not the other way around, we investigate which mapping class groups have dense conjugacy classes. Our main result in this vein is a complete characterization of mapping class groups having dense conjugacy classes. Two topological properties of the surface $\Sigma$ are involved in this characterization. First, the Mann-Rafi (partial) order on the space of ends of the surface. Roughly speaking, an end $x$ is smaller than~$y$ in this order if every neighbourhood of $y$ contains a copy of a neighbourhood of $x$. The second property is the existence of a (finite-type) \emph{non-displaceable} subsurface, \emph{i.e.}\ $S\subset\Sigma$ such that for every $f\in\Homeo(\Sigma)$, we have that $S\cap f(S)\neq\emptyset$.

\begin{theorem}[Characterization of dense conjugacy classes]\label{THM:Charact-Dense-Conjugacy-Classes-simple-version}
Let $\Sigma$ be an infinite-type surface. Then $\MCG(\Sigma)$ has a dense conjugacy class if and only if $\Sigma$ has no non-displaceable finite-type subsurfaces and a unique maximal end.
\end{theorem}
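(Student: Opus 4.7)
The plan is to invoke the Kechris--Rosendal--Truss framework of~\cite{kechris-rosendal,truss}. Since $\MCG(\Sigma)\cong \Aut(C(\Sigma))$ is a non-archimedean Polish group, having a dense conjugacy class is equivalent to a joint embedding property (JEP) for the class of \emph{partial mapping classes}: pairs $(\Phi,\phi)$ with $\Phi$ a finite induced subgraph of $C(\Sigma)$ and $\phi$ a partial graph isomorphism on $\Phi$ realized by some element of $\MCG(\Sigma)$. Basic open sets in $\MCG(\Sigma)$ are precisely the sets of mapping classes extending a fixed such $(\Phi,\phi)$, so verifying density of some conjugacy class reduces to showing that any two such pairs admit a common extension.

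\textbf{Necessity.} Assume $\MCG(\Sigma)$ has a dense conjugacy class. I first rule out a non-displaceable finite-type subsurface $S$. Choosing a pants decomposition of $S$ yielding a finite multicurve $\Phi$, I would construct two partial mapping classes on $\Phi$ whose joint realization would require a single mapping class to both preserve (the combinatorial data of) $S$ and to map $S$ off itself---contradicting non-displaceability. Next I rule out multiple maximal ends: if $e_1\neq e_2$ are both maximal, the Mann--Rafi order furnishes topologically non-interchangeable punctured neighborhoods, and partial mapping classes prescribing incompatible rotational behavior near each $e_i$ cannot be jointly realized, since a single mapping class cannot interchange non-equivalent maximal-end types.

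\textbf{Sufficiency.} Assume $\Sigma$ has a unique maximal end $e^*$ and no non-displaceable finite-type subsurfaces. Given $(\Phi_1,\phi_1), (\Phi_2,\phi_2)$ realized by $f_i\in\MCG(\Sigma)$ supported in finite-type subsurfaces $S_i$, the absence of non-displaceable subsurfaces lets each $S_i$ be pushed off itself. Combining with the self-similarity of neighborhoods of $e^*$ (guaranteed by its uniqueness among maximal ends in the Mann--Rafi order), one finds $h_1,h_2\in\MCG(\Sigma)$ such that $h_1(S_1), h_2(S_2)$ are disjoint and both contained in a common neighborhood of $e^*$. Disjoint supports commute, so $f:=(h_1 f_1 h_1^{-1})\circ(h_2 f_2 h_2^{-1})\in\MCG(\Sigma)$ is a single mapping class simultaneously realizing conjugates of $\phi_1$ and $\phi_2$, verifying JEP.

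\textbf{Main obstacle.} The hardest step will be the disjoint co-embedding of $S_1, S_2$ into a common neighborhood of $e^*$ in the sufficiency direction. This requires invoking both hypotheses at once---no non-displaceable subsurfaces to move supports freely, and the unique maximal end to fit the two finite-type pieces side-by-side---together with a precise understanding of which finite-type surfaces embed into a given neighborhood of $e^*$ via \emph{mapping classes} (respecting genus, punctures, and end-type data), not merely via homeomorphisms. A secondary subtlety is formalizing the Kechris--Rosendal criterion for partial mapping classes: a partial graph isomorphism of $C(\Sigma)$ need not be induced by a homeomorphism, so only the ``geometric'' partial automorphisms are admissible in the JEP argument, and one must check that they still provide a basis of the topology compatible with the criterion.
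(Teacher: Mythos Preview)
Your overall strategy (Kechris--Rosendal JEP framework) matches the paper's, but there are genuine gaps in both directions.

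\textbf{Sufficiency.} You assume the realizing mapping classes $f_i$ are ``supported in finite-type subsurfaces $S_i$''. This is exactly the crux, and it is not free: a generic element of $\MCG(\Sigma)$ inducing a given finite partial isomorphism has no reason to have compact support. The paper's actual hard work is the implication you skipped---showing that under the hypotheses (unique maximal end, no non-displaceable finite-type subsurfaces), every isomorphism between finite substructures of the full curve graph is induced by some homeomorphism whose support is bounded away from the maximal end. This is done by a case analysis on a single separating curve $\alpha$ versus its image $f(\alpha)$ (isotopic, intersecting, disjoint-nested, disjoint-unnested), using self-similarity of the end space to build a ``buffer'' curve $\beta$ so that one can interpolate between $f$ and the identity. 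You identified the wrong obstacle: once bounded support is in hand, the disjoint co-embedding is the easy part.

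\textbf{Necessity.} Both arguments are too vague to work as stated. For non-displaceable $S$: the paper uses a concrete obstruction---take $h=\tau_{\partial S}$ and $h'=\mathrm{Id}$ on $S$; any common $H$ would have to be simultaneously a nontrivial multitwist and the identity on the intersection $g(S)\cap g'(S)\neq\emptyset$. Your ``map $S$ off itself'' idea does not correspond to any partial mapping class one can actually write down. For multiple maximal ends: you treat only the case of two \emph{inequivalent} maximal ends, but the statement also excludes a single equivalence class of maximal ends with more than one element. When that class is finite of size $\geq 2$, the paper invokes the closed normal subgroup of finite index fixing those ends; when it is a Cantor set, one needs a separate argument (split the end space into three homeomorphic pieces and use an order-$3$ rotation versus the identity). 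Your sketch does not address the Cantor case at all.
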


As a matter of fact, the characterization given above is formulated in more technical terms. For a precise and complete statement, see Theorem~\ref{THM:Charact-Dense-Conjugacy-Classes}.

While attending the Big Surf(aces) Seminar, we learned that J.~Lanier and N.~Vlamis have also proved Theorems~\ref{THM:All_CC_are_meager} and~\ref{THM:Charact-Dense-Conjugacy-Classes-simple-version}, see~\cite{Lanier-Vlamis}.

\begin{remark}\label{Remark:countable-one-max-end}
When $\Sigma$ is a genus zero surface whose space of ends is countable, then $\Sigma$ has no (finite-type) non-displaceable subsurfaces and a unique maximal end if and only if its space of ends is homeomorphic to $\omega^\alpha+1$, where $\alpha$ is a countable ordinal. These are precisely the surfaces with countable space of ends for which $\MCG(\Sigma)$ is quasi-isometric (w.r.t.\ the word metric) to a point, compare with Theorem 1.5 in~\cite{mann-rafi}.
\end{remark}

Let $\PMCG(\Sigma)$denote the \emph{pure mapping class group}, that is, the subgroup formed by all orientation-preserving mapping classes which fix every end of the surfaces. Furthermore, let $\PMCG_c(\Sigma)$ denote the group generated by compactly supported mapping classes, respectively. The following result tells us that, except for the case when $\Sigma$ is an infinite-genus surface with one end (\emph{a.k.a.}\ the Loch Ness monster), closed subgroups $\Gamma$ of $\PMCG(\Sigma)$ and $\PMCG_c(\Sigma)$ with lots of Dehn twists do not have dense conjugacy classes, that is, for every $g\in \Gamma$, the conjugacy class $g^\Gamma:= \{hgh^{-1}:h\in\Gamma\}$ is not dense in $\Gamma$.

\begin{theorem}
	\label{THM:CC_for_PMap}
 Let $\Sigma$ be an infinite-type surface that is not homeomorphic to the Loch Ness Monster, and $\Gamma$ be a closed subgroup of $\PMCG(\Sigma)$ that contains, for every closed essential curve $\alpha\subset\Sigma$, a non-trivial power of a Dehn twist w.r.t.\ $\alpha$. Then $\Gamma$ does not contain dense conjugacy classes.
\end{theorem}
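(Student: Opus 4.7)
The plan is to show that, for every $g \in \Gamma$, the conjugacy class $g^\Gamma$ is not dense in $\Gamma$. The starting observation is that since $\Sigma$ is infinite-type but not homeomorphic to the Loch Ness Monster, $\Sigma$ must have at least two distinct topological ends $e_1 \ne e_2$ (the only infinite-type one-ended surface is the Loch Ness Monster). Because $\Gamma \subseteq \PMCG(\Sigma)$, every $f \in \Gamma$ fixes both $e_1$ and $e_2$ individually, and hence preserves the natural nesting order on any collection of pairwise-disjoint simple closed curves separating $e_1$ from $e_2$.

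The strategy I would pursue is to construct a continuous conjugation-invariant function $\Phi \colon \Gamma \to D$, where $D$ is a non-trivial discrete set, taking distinct values on non-empty basic open subsets of $\Gamma$. If such a $\Phi$ exists, then for every $g \in \Gamma$ the conjugacy class $g^\Gamma$ is contained in the clopen proper subset $\Phi^{-1}(\Phi(g))$, so $g^\Gamma$ misses the non-empty open set $\Phi^{-1}(d)$ for any $d \ne \Phi(g)$ and cannot be dense. Equivalently, via the Kechris--Rosendal criterion, this amounts to exhibiting non-empty open sets $U, V \subseteq \Gamma$ whose $\Gamma$-conjugation-saturations are disjoint. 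The candidate for $\Phi$ comes from end-rigidity: when $\Sigma$ has infinite genus accumulating at two ends (e.g.\ the Jacob's ladder), a flux-type homomorphism $\Gamma \to \Z$ measuring the net transport of genus from $e_1$ towards $e_2$ works; when $\Sigma$ has finite genus, a non-displaceable finite-type subsurface containing all of the genus forces certain bounding curves to be preserved up to a rigid invariant; and when $\Sigma$ is planar with multiple ends, invariants built from the Cantor--Bendixson stratification of the end space together with the pure end-fixing should play the analogous role. The hypothesis that $\Gamma$ is closed and contains a non-trivial power of every Dehn twist is used to guarantee that $\Phi$ genuinely separates non-empty open sets, since finite products of Dehn twist powers realize enough elements in $\Gamma$ for distinct level sets of $\Phi$ to appear in basic neighborhoods.

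The main obstacle is carrying out the construction of $\Phi$ uniformly across the various topological types of non-Loch-Ness $\Sigma$, and verifying its continuity in the permutation topology. The most delicate case is likely the genus-zero case with an arbitrary (possibly self-similar) end space, where one lacks genus-based invariants and must rely on subtler end-space data combined with the Dehn twist hypothesis. An alternative, which I suspect is closer to the actual argument in the paper, bypasses the direct construction of $\Phi$ and instead applies the Kechris--Rosendal age-of-partial-automorphisms machinery to exhibit a specific failure of the joint embedding property among finite partial automorphisms of the curve graph that are extendible in $\Gamma$; the end-fixing rigidity of $\Gamma$ should give exactly such a failure, since ordered tuples of separating curves carry strictly more $\Gamma$-orbit types than $\MCG$-orbit types, and two orbit-incompatible partial automorphisms cannot be jointly realized by any single element of $\Gamma$.
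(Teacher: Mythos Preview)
Your starting observation is correct and matches the paper: any infinite-type $\Sigma$ other than the Loch Ness Monster has at least two ends, and end-fixing is the key rigidity. Your second strategy (failure of JEP for partial automorphisms) is indeed what the paper does, via the geometric reformulation \textbf{GEP} (Lemma~\ref{lemma:GEP}) together with the Dehn twist trick (Lemma~\ref{DehnTwistTrick}) and $\Gamma$-non-displaceable subsurfaces (Theorem~\ref{THM:non-displaceable-the-nowhere-dense}, Corollary~\ref{Coro:MapNotJEP}). However, what you have written is a strategic outline rather than a proof, and the concrete route you propose for the invariant $\Phi$ has a genuine gap.

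The gap is in the Jacob's ladder case. Your candidate $\Phi$ is the flux/handle-shift homomorphism measuring net transport of genus from one end to the other. This is a homomorphism $\PMCG(\Sigma)\to\Z$, and it is conjugation-invariant and continuous; but it vanishes identically on $\overline{\pmapc{\Sigma}}$. So if $\Gamma\leq\overline{\pmapc{\Sigma}}$ (which is allowed by the hypotheses, since $\overline{\pmapc{\Sigma}}$ contains all Dehn twists), your $\Phi$ is constant on $\Gamma$ and separates nothing. The paper handles exactly this subcase separately: it exhibits a specific multicurve $M_1$ on the ladder and shows, using the Dehn twist trick together with the fact that a handle shift cannot be approximated by compactly supported classes (Proposition~6.3 of \cite{Patel-Vlamis2018}), that \textbf{GEP} fails for $\Gamma$. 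When $\Gamma$ does contain a power of a handle shift, the paper again gives an explicit \textbf{GEP} obstruction using a curve $\beta$ and the semidirect-product decomposition $\pmap{\Sigma}\cong\overline{\pmapc{\Sigma}}\rtimes\langle h\rangle$, rather than invoking the flux abstractly.

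For comparison, the paper's global case split is: (i) $\Sigma$ has $\geq 3$ ends --- then two separating curves $\alpha,\beta$ separating different triples of ends are either unseparable by $\Gamma$ (Dehn twist trick applies) or bound a $\Gamma$-non-displaceable pair of pants; (ii) $\Sigma$ is the once-punctured Loch Ness Monster --- it has a finite-type non-displaceable subsurface; (iii) $\Sigma$ is Jacob's ladder --- the two subcases above. Your vague ``Cantor--Bendixson stratification'' invariant for the planar multi-ended case is not needed: the paper's argument for $\geq 3$ ends is uniform and uses only that $\Gamma$ fixes ends and contains enough Dehn twists.
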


The following is a corollary to Theorem~\ref{THM:Charact-Dense-Conjugacy-Classes-simple-version} because, when $\Sigma$ is homeomorphic to the Loch Ness Monster, $\PMCG_c(\Sigma)$ is dense in $ \MCG(\Sigma)$ (see Theorem 4 in~\cite{Patel-Vlamis2018}).

\begin{corollary}
 Let $\Sigma$ be the Loch Ness Monster, and $\Gamma$ be any closed subgroup with\linebreak $\PMCG_c(\Sigma) \leq \Gamma \leq \MCG(\Sigma)$. Then $\Gamma$ has a dense conjugacy class.
\end{corollary}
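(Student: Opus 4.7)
My plan is to deduce the corollary directly from Theorem~\ref{THM:Charact-Dense-Conjugacy-Classes-simple-version} together with the density result of Patel and Vlamis mentioned just before the statement. First, since $\Gamma$ is a closed subgroup of $\MCG(\Sigma)$ containing $\PMCG_c(\Sigma)$, it must also contain the topological closure $\overline{\PMCG_c(\Sigma)}$. By Theorem~4 of~\cite{Patel-Vlamis2018}, this closure equals $\MCG(\Sigma)$ when $\Sigma$ is the Loch Ness Monster, so in fact $\Gamma=\MCG(\Sigma)$. It therefore suffices to show that $\MCG(\Sigma)$ itself admits a dense conjugacy class.

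For this I would invoke Theorem~\ref{THM:Charact-Dense-Conjugacy-Classes-simple-version}, which reduces the task to verifying two topological properties of the Loch Ness Monster: (a) the existence of a unique maximal end, and (b) the absence of non-displaceable finite-type subsurfaces. Condition (a) is immediate, since the Loch Ness Monster is by definition the infinite-genus surface with a single end, which is trivially the unique maximal element of the Mann-Rafi order. For condition (b), let $S\subset\Sigma$ be any finite-type subsurface. Because $S$ has finite genus while $\Sigma$ has infinite genus and one end, the complement $\Sigma\setminus S$ still has infinite genus and one end; in particular, it contains a subsurface $S'$ homeomorphic to $S$. The change-of-coordinates principle for infinite-type surfaces then produces a homeomorphism $f\in\Homeo(\Sigma)$ with $f(S)=S'$, so $f(S)\cap S=\emptyset$, showing that $S$ is displaceable.

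The main conceptual content is already packaged into Theorem~\ref{THM:Charact-Dense-Conjugacy-Classes-simple-version}; the remaining work is the closure argument identifying $\Gamma$ with $\MCG(\Sigma)$ and the displaceability verification, neither of which presents a serious obstacle. Combining these ingredients yields a dense conjugacy class in $\Gamma$, as desired.
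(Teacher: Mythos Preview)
Your proof is correct and follows exactly the approach the paper intends: use the Patel--Vlamis density result to conclude $\Gamma=\MCG(\Sigma)$, then invoke Theorem~\ref{THM:Charact-Dense-Conjugacy-Classes-simple-version}. The paper states this as a one-line corollary without spelling out the verification of the two hypotheses, so your additional justification that the Loch Ness monster has a unique maximal end and no non-displaceable finite-type subsurfaces is a welcome elaboration rather than a deviation.
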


Furthermore, we show that the presence of a non-displaceable finite-type subsurface rules out any possible density for conjugacy classes:

\begin{theorem}
    \label{THM:non-displaceable-the-nowhere-dense-versionintro}
 If $\Sigma$ has a non-displaceable finite-type subsurface $S$ and $W \subset \MCG(\Sigma)$ is a non-empty open set, then there exist $V_{1}, V_{2} \subset W$ non-empty and disjoint open sets satisfying that any conjugacy class with non-empty intersection with $V_{i}$, is disjoint from $V_{j}$, for $i \neq j$. In particular, all conjugacy classes of elements are nowhere dense in $\MCG(\Sigma)$.
\end{theorem}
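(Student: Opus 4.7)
The plan is to reduce $W$ to a basic open set of the permutation topology and then use the non-displaceable subsurface $S$ to split a further basic open subset of $W$ into two disjoint open pieces that lie in distinct conjugacy classes of $\MCG(\Sigma)$. Concretely, fix $g_0\in W$. By definition of the permutation topology on $\MCG(\Sigma)\cong\Aut(C(\Sigma))$, there is a basic open neighborhood
\begin{equation*}
U=\bigl\{g\in\MCG(\Sigma):g(\alpha_i)=\beta_i,\ i=1,\ldots,n\bigr\}\subset W,
\end{equation*}
where $\alpha_1,\ldots,\alpha_n$ are essential simple closed curves in $\Sigma$ and $\beta_i=g_0(\alpha_i)$. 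By enlarging the defining collection (which only shrinks $U$ further inside $W$), I would assume that the boundary multicurve $\partial S$ and its $g_0$-image $g_0(\partial S)$ are included, together with enough additional curves to isolate $S$ and $g_0(S)$ combinatorially.

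Next I would pick a curve $\xi$ disjoint from all $\alpha_i,\beta_i$, together with two candidate target curves $\eta_1,\eta_2$ of the same topological type as $\xi$, also disjoint from every $\alpha_i,\beta_i$, chosen so that $\eta_1$ and $\eta_2$ sit in topologically distinguishable positions with respect to $S$. A natural choice is to place $\eta_1$ so that it bounds, together with the cut system, a finite-type region containing a copy of $S$, while $\eta_2$ bounds an analogous region that lies in a complementary component disjoint from $S$. Such choices are possible because $\Sigma$ is of infinite type and the cut system $\alpha_i\cup\beta_i$ is finite. Setting
\begin{equation*}
V_i:=\bigl\{g\in U:g(\xi)=\eta_i\bigr\},\quad i=1,2,
\end{equation*}
produces two disjoint non-empty basic open subsets of $W$; non-emptiness follows from the change-of-coordinates principle, which lifts any partial isomorphism of $C(\Sigma)$ of the required form to an actual mapping class.

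The crux of the argument is showing that no conjugacy class of $\MCG(\Sigma)$ meets both $V_1$ and $V_2$. Assume $g_j\in V_j$ with $g_2=hg_1h^{-1}$ for some $h\in\MCG(\Sigma)$. Applying $g_2=hg_1h^{-1}$ to each $\alpha_i$ and to $\xi$ yields $g_1(h^{-1}\alpha_i)=h^{-1}\beta_i$ and $g_1(h^{-1}\xi)=h^{-1}\eta_2$. Combined with $g_1\in U\cap V_1$, these relations should force $h^{-1}$ to map the combinatorial region of $\Sigma$ containing $S$ into a region that does not contain $S$; equivalently, $h^{-1}(S)$ is isotopic into a subsurface disjoint from $S$, contradicting the non-displaceability of $S$ applied to $h^{-1}$. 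The nowhere-dense corollary is then formal: for any conjugacy class $C$ and any non-empty open $W$, at least one of the two sets $V_i$ produced by the construction is disjoint from $C$ and open in $W$, so $\overline{C}$ has empty interior.

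The main obstacle I anticipate is making the choice of $\xi,\eta_1,\eta_2$ (and, if needed, of additional curves in the cut system) rigid enough that the conjugating element $h$ is really forced to displace $S$. Non-displaceability is an existential condition about homeomorphisms, whereas the permutation topology records only finite combinatorial data about curves; bridging the two requires choosing enough defining curves of $U$ so that the position of $S$ is combinatorially pinned by the cut system. The subtle point is that conjugation is allowed to permute the $\alpha_i$ and $\beta_i$ among themselves, so the cut system must be chosen asymmetrically with respect to the $S$-side of $\partial S$, in order to guarantee that no such permutation provides an escape from the non-displaceability contradiction.
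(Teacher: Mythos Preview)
Your proposal has a genuine gap: the mechanism you use to distinguish $V_1$ from $V_2$ is not a conjugacy invariant, and the step where you claim that the conjugating element $h$ must displace $S$ is not justified (and is almost certainly false as stated).

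Concretely, the sets $V_i=\{g\in U:g(\xi)=\eta_i\}$ are distinguished only by where the single curve $\xi$ lands. But the image of a fixed curve under a mapping class is not preserved under conjugation in any useful way: from $g_2=hg_1h^{-1}$ you only get $g_1(h^{-1}\xi)=h^{-1}\eta_2$, which says nothing about $g_1(\xi)$. Your hope is that the \emph{additional} constraints $g_1(\alpha_i)=\beta_i$ and $g_1(h^{-1}\alpha_i)=h^{-1}\beta_i$ pin down $h^{-1}$ enough to force it to displace $S$. They do not. Once $g_1$ is fixed, there are typically many curves $\gamma$ with $g_1(\gamma)$ in a prescribed isotopy class, and $h^{-1}$ need only send $\alpha_i$ to some such curve; there is no reason this should move $S$ off itself. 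You correctly flag this as the ``main obstacle'', but it is not a technical wrinkle to be ironed out with more curves in the cut system---it is the heart of the matter, and finite combinatorial data about curve images simply cannot detect conjugacy this way.

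The paper's approach is structurally different and avoids this problem. Rather than encoding the position of $S$ via target curves, it distinguishes $V_1$ and $V_2$ by \emph{powers of Dehn twists} along a multicurve $M$ that fills $S$ (outside a compact piece). The key point is that conjugation transforms $\tau_\alpha^k$ into $\tau_{h(\alpha)}^k$, preserving the exponent, so the ``amount of twisting'' is a genuine conjugacy-respecting quantity. Non-displaceability of $S$ then guarantees that $g(M)$ and $g'(M)$ always share a curve or have intersecting curves, and one obtains a contradiction because the same $H$ would have to twist both too little and too much near such a curve (Lemma~\ref{lemma:DehnTwistTrickNWD}). This Dehn twist trick is the missing idea in your proposal.
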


As before, we actually prove a more general statement for certain closed subgroups of $\MCG(\Sigma)$, see Theorem~\ref{THM:non-displaceable-the-nowhere-dense}.

Finally, we characterize big mapping class groups having a conjugacy class that is somewhere dense.

\begin{theorem}
	\label{thm:Somewhere-dense}
	Let $\Sigma$ be an infinite-type surface. Then $\MCG(\Sigma)$ has a conjugacy class which is somewhere dense
if and only if $\Sigma$ has at most two maximal ends and no non-displaceable subsurfaces of finite type.
\end{theorem}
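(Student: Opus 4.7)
The argument splits into the two implications of the biconditional.

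\textbf{Necessity.} Suppose $C = g^{\MCG(\Sigma)}$ is dense in some non-empty open set $W$. That $\Sigma$ has no non-displaceable finite-type subsurface is immediate from Theorem~\ref{THM:non-displaceable-the-nowhere-dense-versionintro}, since that result forces every conjugacy class to be nowhere dense in the presence of such a subsurface. For the bound on the number of maximal ends, assume for contradiction that $\Sigma$ has three distinct maximal ends $e_1,e_2,e_3$. The plan is to exhibit inside the arbitrary open set $W$ two disjoint non-empty open subsets that a single conjugacy class cannot meet simultaneously. The key conjugacy invariant is the induced permutation on the (finite) set of maximal ends: conjugate mapping classes realize permutations of the same cycle type. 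After passing to a basic open subset $W_{0}\subseteq W$ whose defining curve data contains a finite family of separating curves isolating $e_1,e_2,e_3$ into pairwise disjoint regions, every $f\in W_0$ permutes these regions in a controlled way. When the three maximal ends admit pairwise homeomorphisms of neighborhoods, one can realize distinct cycle types in $W_0$; when the three are pairwise non-equivalent (so every element of $W_0$ fixes each of them), one instead distinguishes elements via local dynamical invariants of their action on a neighborhood of a single maximal end. Either way, $W_0$ splits into two disjoint non-empty conjugacy-saturated open subsets, and $C$ misses one of them, contradicting density in $W_0$.

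\textbf{Sufficiency.} Assume $\Sigma$ has no non-displaceable finite-type subsurface and at most two maximal ends. If there is a unique maximal end, Theorem~\ref{THM:Charact-Dense-Conjugacy-Classes-simple-version} gives a dense, and a fortiori somewhere dense, conjugacy class. Assume now that the maximal ends are exactly $e_1$ and $e_2$. Let $H\leq \MCG(\Sigma)$ be the subgroup of mapping classes fixing $e_1$ and $e_2$ individually; it is closed of index at most two in $\MCG(\Sigma)$ and therefore clopen, by the standard Baire-category argument for closed finite-index subgroups of a Polish group. The plan is then to produce a conjugacy class of $H$ which is \emph{dense} in $H$; pulled back to $\MCG(\Sigma)$, the $\MCG(\Sigma)$-conjugacy class it generates will be dense in the non-empty open set $H$, and hence somewhere dense in $\MCG(\Sigma)$.

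To obtain a dense conjugacy class in $H$, I would apply the Kechris--Rosendal--Truss framework of~\cite{kechris-rosendal,truss} exactly as used in the proof of Theorem~\ref{THM:Charact-Dense-Conjugacy-Classes-simple-version}: realize $H$ as the automorphism group of a countable first-order structure (the relevant portion of the curve graph, enriched with two unary predicates distinguishing the labels $e_1$ and $e_2$) and verify the joint embedding property (JEP) for the associated class of finite partial automorphisms. The main obstacle is the JEP: given two finite partial automorphisms, we must amalgamate them into a single larger finite partial automorphism while respecting the labeling of the two maximal ends. The plan is to push the supports of the two partial data configurations into disjoint small neighborhoods of $e_1$ and $e_2$ respectively (or both towards one end, if more convenient), using the absence of non-displaceable finite-type subsurfaces to isotope any compact configuration arbitrarily close to either maximal end. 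Compared with the single-maximal-end argument underlying Theorem~\ref{THM:Charact-Dense-Conjugacy-Classes-simple-version}, the two-end setting requires careful bookkeeping to preserve the $e_1$/$e_2$ labels throughout the amalgamation; this is the only substantive new ingredient, and it is where the hypothesis of exactly two maximal ends is indispensable, as three or more would obstruct JEP by exactly the invariant exploited in the necessity direction.
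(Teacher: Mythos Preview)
Your necessity argument has a genuine gap in the case where the set $\mathcal{M}$ of maximal ends is infinite. Under the standing hypothesis that $\Sigma$ has no non-displaceable finite-type subsurface, the dichotomy is not ``three equivalent ends versus three inequivalent ends'' but rather: either $\mathcal{M}$ is finite of size $\geq 3$ --- which already forces a non-displaceable pair of pants separating three invariant ends (Example~2.5 in~\cite{mann-rafi}), contradicting the hypothesis --- or $\mathcal{M}$ is a Cantor set of pairwise equivalent ends. In the Cantor case your cycle-type invariant breaks down: the three points $e_1,e_2,e_3$ you select do \emph{not} form a $\MCG(\Sigma)$-invariant set, so an element $f\in W_0$ fixing the separating curves in $A$ need not permute $\{e_1,e_2,e_3\}$ at all (it may send $e_1$ to some other maximal end in the same complementary region). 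There is thus no well-defined ``cycle type on $\{e_1,e_2,e_3\}$'' to serve as a conjugacy invariant, and the vague appeal to ``local dynamical invariants'' does not fill the hole. The paper handles the Cantor case by an entirely different, Dehn-twist-based obstruction: inside any basic open set one finds curves $\beta_1,\beta_2$ lying in the Cantor region and builds the open sets determined by $\tau^2\circ f$ and $\tau^3\circ f$ (twisting about $f(\beta_1)\cup f(\beta_2)$); no conjugacy class can meet both, since conjugation cannot turn a local $\tau^2$ into a $\tau^3$.

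For sufficiency your strategy is close in spirit to the paper's but slightly more ambitious, and you should be aware of the difference. The paper does \emph{not} produce a conjugacy class dense in all of $H$; it fixes a specific pair of separating curves $A=\{a_0,a_1\}$ isolating the two maximal ends and shows that any two elements of the open set $U_A$ satisfy \textbf{GEP}, which yields a conjugacy class dense in $U_A$. The point is that for $h\in U_A$ one has a decomposition $h=h_0\circ h_{S_A}\circ h_1$ along the three complementary pieces, and the one-maximal-end localization argument ($(2)\Rightarrow(3)$ of Theorem~\ref{THM:Charact-Dense-Conjugacy-Classes}) applies to each $h_i$ separately to bound its support away from the maximal end. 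Your proposal to get full \textbf{JEP} for $H$ would require localizing an \emph{arbitrary} $h\in H$ (which need not fix any separating curve) to have support bounded away from both maximal ends; this may well be true, but it is a genuine extra step beyond what the paper proves, and your sketch does not address it.
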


\begin{remark}
Infinite-type surfaces with finite-type non-displaceable subsurfaces whose space of ends have exactly two maximal ends actually exist, see Figure~\ref{Fig:two-ends-nondisplaceable}. When $\Sigma$ is a genus zero surface with countable space of ends, then $\Sigma$ has two maximal ends if and only if its space of ends is homeomorphic to $\omega^\alpha\cdot 2+1$, $\alpha$ a countable ordinal. When $\alpha$ is a successor ordinal (\emph{e.g.}\ a natural number) then $\MCG(\Sigma)$ is not quasi-isometric to a point (w.r.t.\ the word metric). On the other hand, if $\alpha$ is a successor ordinal, it is not clear how to define the quasi-isometry type of $\MCG(\Sigma)$ as this group is not generated by a CB set. See Theorems~1.2 and~1.5 in \cite{mann-rafi} for details.
\end{remark}

\textbf{Technology}. The methods that we use to achieve the aforementioned results lie at the border between geometry and model theory. More precisely, our proofs rely on the work of A.~Kechris and C.~Rosendal~\cite{kechris-rosendal} on automorphism groups of countable structures. These authors provide a characterization of comeager and dense conjugacy classes using the formalism of Fra\"{i}ss\'e theory. In order to apply this formalism we need to do two things. The first is to find, for any closed subgroup $\Gamma$ of a big mapping class group, an ultrahomogeneous countable structure whose automorphism group is isomorphic to $\Gamma$. This is achieved thanks to a standard technique that we call Fra\"{i}ss\'efication. Roughly speaking, since $\Gamma$ acts on~$C(\Sigma)$, one can enrich the structure of this graph by considering only finite isomorphisms induced by the restriction of elements of $\Gamma$. Second, we need to translate the properties that characterize comeager and dense conjugacy classes from the language of Fra\"{i}ss\'e's theory to the language of surfaces and homeomorphisms. To our knowledge, this has not been done before in the literature. For examples of such translations, see Sections~\ref{SSEC:Translation}
and~\ref{SSEC:Needed-Lemmas}.

\textbf{Ample generics and automatic continuity}. A Polish group $G$ has the \emph{automatic continuity property} if every homomorphism from $G$ to a separable group $H$ is continuous. In other words, any map preserving the algebraic structure must be continuous.
In~\cite{kechris-rosendal}, Kechris and Rosendal show that automatic continuity follows from a dynamical property called \emph{ample generics}: a Polish group $G$ has \emph{ample generic} elements (or simply \emph{ample generics}) if for every~$n\in\mathbb{N}$, there is an orbit for the diagonal conjugacy action of $G$ on $G^n$ whose complement is a countable union of nowhere dense subsets (\emph{i.e.}\ the orbit is comeager). Theorem~\ref{THM:All_CC_are_meager} implies that no big mapping class group has ample generics. Luckily, there are other ways to show that a group has the automatic continuity property. For example, K.~Mann uses the \emph{Steinhaus} property to show automatic continuity in the noncompact case~\cite{Mann_note}. The characterization of Steinhaus big mapping class groups is still open.

\textbf{Structure of this article}. Section~\ref{Section:Background} recalls the background from infinite-type surfaces, spaces of ends and Fra\"{i}ss\'e's theory. In particular, Theorem~\ref{Theorem:Criteria} summarizes the criteria we extract from~\cite{kechris-rosendal} to obtain our results. The process of Fra\"{i}ss\'efication applied to the curve graph is detailed in Section~\ref{SSEC:Fraissefication}. In Section~\ref{section:meager}, we begin by translating a property of Fra\"{i}ss\'e classes called the \textbf{local Weak Amalgamation Property} to the language of surfaces and homeomorphisms and use this to prove Theorem~\ref{THM:All_CC_are_meager}. Section~\ref{SEC:dense-conjugacy-classes} deals with the characterization of big mapping class groups having a dense conjugacy class. In the language of Fra\"{i}ss\'e classes, the existence of dense conjugacy classes is characterized by the  \textbf{Joint Embedding Property}. After translating this property to the language of surfaces and homeomorphisms, we dicuss a simple geometric trick to disprove it (we call this the Dehn twist trick) and we present a simple illustrative instance of Theorem~\ref{THM:Charact-Dense-Conjugacy-Classes-simple-version}. This particular case condenses all the arguments needed to understand the general case. In Section~\ref{SEC:Nowhere-Dense-Conjugacy-Classes}, we address the proofs of (a general version of)
Theorem~\ref{THM:non-displaceable-the-nowhere-dense-versionintro}, Theorem~\ref{thm:Somewhere-dense} and Theorem~\ref{THM:CC_for_PMap}.
In Section~\ref{SEC:questions}, we show that two properties that are a test for ample generics fail for all big mapping class groups. These properties are Roelcke precompactness and oligomorphy. Moreover, we present a list of open questions.

\textbf{Acknowledgements}. We want to thank C.~Rosendal for fruitful discussions that guided us to start this article and the anonymous referee for all corrections and remarks. Special thanks to J.~Lanier and N.~Vlamis for sharing with us early versions of their manuscript. J. Hern\'andez Hern\'andez was supported during the creation of this article by the UNAM-PAPIIT research grants IA104620 and IN102018, and by the CONACYT research grant Ciencia de Frontera 2019 CF 217392. The research of M.~Hru\v{s}\'ak was partially supported by PAPIIT grant IN104220 and CONACyT grant A1-S-16164.  I.~Morales and F.~Valdez were partially supported by PAPIIT grant IN102018. I.~Morales was partially supported by Proyecto CONACyT Ciencia de Frontera 217392 cerrando brechas y extendiendo puentes en Geometr\'ia y Topolog\'ia. A.~Randecker acknowledges support of the Deutsche Forschungsgemeinschaft within the Priority Program SPP 2026 ``Geometry at Infinity'' and the Heidelberg STRUCTURES Excellence Cluster. M.~Sedano's research was supported by DGAPA's Posdoctoral grant. F.~Valdez was also supported by CONACYT grant Ciencia B\'asica CB-2016-01 283960.

\section{Background}
    \label{Section:Background}

\subsection{Infinite-type surfaces}\label{subsection:ends} In this article, by \emph{surface} we mean an orientable manifold of real dimension $2$, that is, a second countable, Hausdorff topological space which is locally homeomorphic to the plane. A connected surface $\Sigma$ is said to be of \emph{finite type} if its fundamental group is finitely generated. In all other cases, we say that $\Sigma$ is of \emph{infinite type}.  Finite-type surfaces with empty boundary are classified, up to homeomorphisms, by their genus and number of punctures. Infinite-type surfaces with empty boundary are classified, up to homeomorphisms, by their genus (which now can be infinite) and a pair of nested topological spaces $E_\infty(\Sigma)\subset E(\Sigma)$. The space $E(\Sigma)$ is called the \emph{space of ends of $\Sigma$} and, roughly speaking, encodes the different ways in which one can escape to infinity within $\Sigma$. This space can be defined for a large class of topological spaces, see for example the work of F.~Raymond~\cite{Raymond60}. For surfaces, a further distinction can be made among ends, namely those which are accumulated by genus: this is where $E_\infty(\Sigma)$ enters the picture. A detailed discussion about spaces of ends of surfaces can be found in the work of I.~Richards~\cite{Richards63}. In the rest of this section, we recall the definition and generalities of $E_\infty(\Sigma)\subset E(\Sigma)$ because we use some aspects of them in the proofs of our main results.

A \textit{pre-end} of a surface $\Sigma$ is a nested sequence $U_1\supset U_2\supset \cdots$ of connected open subsets of~$\Sigma$ such that the boundary of $U_n$ in $\Sigma$ is compact for every $n\in\mathbb{N}$ and for any compact subset $K$ of $\Sigma$ there exists $l\in\mathbb{N}$ such that $U_{l}\cap K=\emptyset$. We shall denote the pre-end $U_1\supset U_2\supset \cdots$ as $(U_n)_{n\in\mathbb{N}}$. Two such sequences $(U_{n})_{n\in\mathbb{N}}$ and $(U_{n}^{'})_{n\in \mathbb{N}}$ are said to be equivalent if for any~$i \in \mathbb{N}$, there exists $j \in \mathbb{N}$ such that $U_{j}'\subset U_i$ and vice versa. We denote by $E(\Sigma)$ the corresponding set of equivalence classes and call each equivalence class $[U_{n}]_{n\in\mathbb{N}}\in E(\Sigma)$ an \textit{end} of $\Sigma$. We endow $E(\Sigma)$ with a topology by specifying a pre-basis as follows: for any open subset $W\subset \Sigma$ whose boundary is compact, we define $W^{\ast}:=\{[U_{n}]_{n\in \mathbb{N}}\in E(\Sigma): W\supset U_{l}\hspace{2mm}\text{for $l$ sufficiently large}\}$. We call the corresponding topological space \emph{the space of ends~of~$\Sigma$}.

\begin{proposition}
\label{p:1.2}
\cite[Proposition 3]{Richards63} The space of ends of a connected orientable surface~$\Sigma$ is totally disconnected, compact and Hausdorff. In particular, $E(\Sigma)$ is homeomorphic to a closed subspace of the Cantor set.
\end{proposition}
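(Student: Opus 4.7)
The plan is to exhibit $E(\Sigma)$ as an inverse limit of finite discrete sets and deduce all stated properties from that description. First, I would fix a compact exhaustion $\Sigma=\bigcup_{n\in\mathbb{N}} K_n$ where each $K_n$ is a connected compact subsurface with $K_n\subset\operatorname{int}(K_{n+1})$ and $\partial K_n$ a finite disjoint union of simple closed curves. Such an exhaustion exists because $\Sigma$ is a second countable 2-manifold. Since $\partial K_n$ is compact and a disjoint union of finitely many circles, $\Sigma\setminus K_n$ has only finitely many connected components, all with compact boundary. I would then establish a natural bijection between $E(\Sigma)$ and the inverse limit $\varprojlim_n \pi_0(\Sigma\setminus K_n)$: an end $[U_n]_{n\in\mathbb{N}}$ determines, for each $m$, the unique component $C_m$ of $\Sigma\setminus K_m$ that contains $U_\ell$ for all sufficiently large $\ell$, and conversely a coherent choice of such components yields an end by taking $U_m:=C_m$.

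Given this inverse limit picture, the topological properties drop out. For \emph{Hausdorff}: if $[U_n]\neq[V_n]$ are distinct ends, pick $m$ large enough that the associated components $C_m^U,C_m^V$ of $\Sigma\setminus K_m$ differ (this must happen, else the sequences would be equivalent); then $(C_m^U)^*$ and $(C_m^V)^*$ are disjoint pre-basic open sets separating them. For \emph{totally disconnected}: every pre-basic set $W^*$ with $W$ a union of components of some $\Sigma\setminus K_m$ is clopen, because its complement in $E(\Sigma)$ is the union of the stars of the remaining components (any end is eventually contained in exactly one such component). These clopen sets form a basis, so $E(\Sigma)$ is zero-dimensional and hence totally disconnected.

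For \emph{compactness}, the inverse limit of finite discrete (compact Hausdorff) spaces is compact by Tychonoff; alternatively I would verify the finite intersection property directly, using that any descending sequence of components across the $K_n$ produces an end. Second countability comes for free: only countably many pre-basic sets $W^*$ with $W$ a union of components of some $\Sigma\setminus K_n$ suffice. The final sentence of the proposition then follows from a standard fact: any compact Hausdorff totally disconnected second countable space is metrizable (Urysohn) and embeds as a closed subspace of $\{0,1\}^{\mathbb{N}}$, i.e.\ of the Cantor set.

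The main obstacle I anticipate is the bookkeeping for the bijection with the inverse limit, in particular verifying that equivalent pre-end sequences really give the same coherent system of components and that the map is injective: one must show that if two pre-ends pick out the same component of $\Sigma\setminus K_n$ for every $n$, then they are equivalent in the sense of the definition. This hinges on the fact that any $U_i$ in a pre-end is, for $n$ large, contained in a single component of $\Sigma\setminus K_n$, which in turn requires that $\partial U_i$ be compact (given) and that $K_n$ eventually engulfs this boundary. Once this point is handled carefully, the rest is formal.
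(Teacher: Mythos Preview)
The paper does not actually prove this proposition: it is stated with a citation to \cite[Proposition 3]{Richards63} and used as background, with no argument supplied. So there is no ``paper's own proof'' to compare against.

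That said, your proposed argument is the standard one and is correct. Realizing $E(\Sigma)$ as the inverse limit $\varprojlim_n \pi_0(\Sigma\setminus K_n)$ over a compact exhaustion is exactly how Richards (and Freudenthal before him) treats ends, and all the claimed properties follow as you outline. The one place to be slightly more careful is your injectivity step: you want that if two pre-ends $(U_n)$ and $(V_n)$ determine the same component of $\Sigma\setminus K_m$ for every $m$, then they are equivalent. The point is that for fixed $i$, once $K_m\supset \partial U_i$ the connected set $U_i$ lies in a single component $C_m$ of $\Sigma\setminus K_m$; since $(V_n)$ also selects $C_m$, some $V_j\subset C_m\subset U_i$ (using that the $C_m$ are nested and cofinal with the $V_j$'s). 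Symmetrically one gets $U_k\subset V_j$, giving equivalence. With that detail filled in, the rest is indeed formal.
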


The \textit{genus} of a surface $\Sigma$ is the supremum of the genera of its compact subsurfaces.
A surface is said to be \textit{planar} if all of its compact subsurfaces are of genus zero. An end~$[U_n]_{n\in\mathbb{N}}$ is called \textit{planar} if there exists $l\in\mathbb{N}$ such that $U_l$ is planar. We define $E_{\infty}(\Sigma)\subset E(\Sigma)$ as the set of all ends of $\Sigma$ which are not planar. It follows from the definition that $E_{\infty}(\Sigma)$ forms a closed subspace of $E(\Sigma)$. Every $f\in\Homeo(\Sigma)$ induces an element of $\Homeo(E(\Sigma))$ preserving $E_{\infty}(\Sigma)$ which we denote by $f^*$.

\begin{theorem}[Classification of orientable surfaces]
\label{t:1.1}
\cite[Chapter 5]{Kerekjarto23} Let $\Sigma$ and~$\Sigma^{'}$ be two orientable surfaces of the same genus. Then $\Sigma$ and $\Sigma^{'}$ are homeomorphic if and only if $E_{\infty}(\Sigma)\subset E(\Sigma) $ and $E_{\infty}(\Sigma^{'})\subset E(\Sigma^{'})$ are homeomorphic as nested topological spaces.
\end{theorem}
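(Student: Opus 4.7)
The forward direction is immediate: any homeomorphism $f\colon\Sigma\to\Sigma^{'}$ sends a pre-end sequence $(U_n)$ to the pre-end sequence $(f(U_n))$, respects the defining equivalence relation, and therefore induces a homeomorphism $f^*\colon E(\Sigma)\to E(\Sigma^{'})$. Since an end $[U_n]$ is non-planar if and only if every $U_l$ contains a handle, which is an intrinsic topological invariant, $f^*$ maps $E_\infty(\Sigma)$ onto $E_\infty(\Sigma^{'})$, giving the required homeomorphism of nested spaces.

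For the converse, suppose $\phi\colon(E(\Sigma),E_\infty(\Sigma))\to(E(\Sigma^{'}),E_\infty(\Sigma^{'}))$ is a homeomorphism of nested spaces. The plan is to realize $\phi$ by a homeomorphism of surfaces built through compatible compact exhaustions. First I would fix a ``standard'' connected compact exhaustion $\Sigma=\bigcup_{n}K_n$ with $K_n\subset\operatorname{int}(K_{n+1})$, such that each component of $\Sigma\setminus K_n$ is unbounded with a single boundary circle, and such that the sets $U^*$, as $U$ ranges over components of $\Sigma\setminus K_n$ and $n$ ranges over $\mathbb{N}$, form a clopen basis of $E(\Sigma)$. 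Choose an analogous exhaustion $\{K_n^{'}\}$ for $\Sigma^{'}$. Using $\phi$ and passing to subsequences, I would match the components of $\Sigma\setminus K_n$ with those of $\Sigma^{'}\setminus K_n^{'}$ compatibly with the inclusions. For each matched pair $(U,U^{'})$, the sets of ends $U^*$ and $(U^{'})^*$ correspond under $\phi$ together with their $E_\infty$-labels, which forces $U$ and $U^{'}$ to have matching genus: infinite if some end of $U^*$ is non-planar, and finite, determined by the global genus and the combinatorics of the matched decomposition, otherwise. Then I construct homeomorphisms $f_n\colon K_n\to K_n^{'}$ inductively, applying the classification of compact orientable surfaces with boundary (by genus and number of boundary components) to each annular difference $K_{n+1}\setminus\operatorname{int}(K_n)$, and gluing so that $f_{n+1}$ extends $f_n$. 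The union $f=\bigcup_n f_n$ is a homeomorphism $\Sigma\to\Sigma^{'}$ inducing $\phi$.

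The main obstacle is the inductive extension step: once $f_n$ has been fixed, I must choose $f_{n+1}$ to agree with $f_n$ on $K_n$ while still realizing the prescribed matching on the next ring. The key technical tool is that any self-homeomorphism of the boundary of a compact orientable surface with boundary extends over an annular collar to the whole surface; thus after choosing $f_{n+1}$ on $K_{n+1}\setminus\operatorname{int}(K_n)$ via the compact classification theorem, I can compose with a collar homeomorphism to align it with $f_n$ along the inner boundary without disturbing the outer boundary matching. Organising the exhaustions $\{K_n\}$ and $\{K_n^{'}\}$ so that these compatibility adjustments can be made at every stage, and so that the process exhausts both surfaces simultaneously, is the heart of the Ker\'ekj\'art\'o--Richards argument.
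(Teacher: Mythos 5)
The paper does not prove this theorem: it is quoted as background from Ker\'ekj\'art\'o~\cite{Kerekjarto23} (see also Richards~\cite{Richards63}), so there is no internal proof to compare against. Your sketch follows the classical exhaustion argument, and its skeleton is the right one: the forward direction via the induced map on end spaces, and the converse via compatible compact exhaustions, the classification of compact orientable surfaces with boundary applied to the ring pieces, and collar adjustments to splice the partial homeomorphisms together.

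Two points, however, are genuine gaps rather than routine details. First, the claim that matching end-sets ``forces $U$ and $U'$ to have matching genus'' is not correct for exhaustions chosen independently in advance: the genus of $\Sigma$ may sit inside $K_2\setminus\operatorname{int}(K_1)$ while the corresponding genus of $\Sigma'$ sits inside $K_5'\setminus\operatorname{int}(K_4')$, and near a non-planar end genus generally appears at different depths in the two exhaustions; the same mismatch occurs for isolated punctures. The exhaustion of $\Sigma'$ must therefore be built adaptively, in a zig-zag where $K_{n+1}'$ is chosen only after $f_n$ is fixed, large enough that each component of $\Sigma'\setminus K_{n+1}'$ carries the end-set, the genus (finite or infinite), and the punctures prescribed by $\phi$ for its partner; making this possible uses both the equal-genus hypothesis and the fact that $E_\infty$ is precisely the set of ends at which genus accumulates. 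This bookkeeping is the actual content of the Ker\'ekj\'art\'o--Richards proof, and your write-up defers all of it to one sentence. Second, your collar lemma is false as stated: a self-homeomorphism of a boundary circle extends over an annulus rel the other boundary circle if and only if it is orientation-preserving (it must be isotopic to the identity), so the induction must maintain orientation-preserving identifications at every stage; without that hypothesis the alignment step can genuinely fail, since an orientation-reversing discrepancy on $\partial K_n$ cannot be absorbed into a collar.
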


Moreover, as explained in~\cite{Richards63}, every pair of nested closed subsets of the Cantor set can be realized as the space of ends of an orientable surface.

If $S$ is a finite-type surface, then $E_\infty(S)=\emptyset$ and $E(S)$ is in natural bijection\,\footnote{In general, any isolated puncture of a surface corresponds to an isolated point in its space of ends (not accumulated by genus).} with the set of punctures of $S$. In other words, Theorem~\ref{t:1.1} above is valid for finite-type surfaces.
Even though there are uncountably many infinite-type surfaces, there are some which have recieved special names because they play a distinguished role in some mathematical context. We finish this section by recalling some of these names, mainly because we use them later in the article.
\begin{enumerate}
    \item If $E_\infty(\Sigma)=E(\Sigma)$ is a singleton, $\Sigma$ is called a \emph{Loch Ness monster},
    \item If $E_\infty(\Sigma)=E(\Sigma)$ consists of two points, $\Sigma$ is called a \emph{Jacob's ladder}.
    \item If the genus of $\Sigma$ is zero and $E(\Sigma)$ is homeomorphic to $\omega+1$, $\Sigma$ is called a \emph{flute surface}.
    \item If $E_\infty(\Sigma)=\emptyset$ and $E(\Sigma)$ is homeomorphic to a Cantor set, $\Sigma$ is called a \emph{Cantor tree}.
\end{enumerate}

This nomenclature can be traced back to ~\cite{Sullivan81} and~\cite{Ghys95}.

\subsection{Big mapping class groups}

To any orientable surface, we associate its \emph{extended mapping class group}, which is the group of all homeomorphisms (including those that reverse orientation) of the surface in question modulo isotopy\,\footnote{That is, two homeomorphisms are equivalent if and only if they are isotopic.}. If we restrict ourselves to the group of homeomorphisms of the surface preserving orientation and mod out by isotopy, the result is the \emph{mapping class group}, which is the main object of study in this article. Note that by definition, the mapping class group is a subgroup of index two of the extended mapping class group. The elements of the (extended) mapping class group are called mapping classes and we will in the following often implicitly choose a representative when refering to a mapping class.

If $\Sigma$ is a surface, we denote by $\MCG^*(\Sigma)$ and $\MCG(\Sigma)$ its extended mapping class group and mapping class group, respectively.

\subsubsection{Topology of mapping class groups and the curve graph} We endow $\Homeo(\Sigma)$ with the compact-open topology and $\MCG^*(\Sigma)$ with the quotient topology. We refer the reader to N.~Vlamis' note~\cite{Vlamis_note} on the topology of mapping class groups for details about the topics discussed in this section. Recall that a topological group is \emph{Polish} if it is separable and completely metrizable.

\begin{proposition}
    \label{Proposition:BigModsArePolish}
Let $\Sigma$ be an infinite-type surface. Then $\MCG^*(\Sigma)$ is a Polish group. Moreover, every closed subgroup of  $\MCG^*(\Sigma)$ is Polish.
\end{proposition}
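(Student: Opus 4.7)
The plan is to reduce the statement to two standard facts from descriptive set theory: the symmetric group $\mathrm{Sym}(V)$ on a countably infinite set $V$, equipped with the topology of pointwise convergence, is a Polish group; and every closed subgroup of a Polish group is itself Polish. The bridge to $\MCG^{*}(\Sigma)$ is the already-cited isomorphism $\MCG^{*}(\Sigma)\cong\Aut(C(\Sigma))$ from~\cite{Bavard-Dowdall-Rafi,hernandez-etal}, together with the fact that, under this identification, the quotient topology on $\MCG^{*}(\Sigma)$ induced from the compact-open topology on $\Homeo(\Sigma)$ agrees with the permutation topology inherited from $\mathrm{Sym}(V(C(\Sigma)))$. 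This coincidence of topologies is recorded in Vlamis' note~\cite{Vlamis_note}, so I would simply cite it rather than reproving it.

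Granting the identification, the first step is to recall why $\mathrm{Sym}(V)$ is Polish: fixing an enumeration $(v_{n})_{n\in\N}$ of $V$, the two-sided metric $d(f,g)=\sum_{n\in\N} 2^{-n}(\delta(f(v_{n}),g(v_{n}))+\delta(f^{-1}(v_{n}),g^{-1}(v_{n})))$, with $\delta$ the discrete $\{0,1\}$-metric, is compatible and complete, and the subgroup of finitely supported permutations is countable and dense. The second step is to argue that $\Aut(C(\Sigma))$ is closed in $\mathrm{Sym}(V(C(\Sigma)))$: a permutation $f$ of vertices is a graph automorphism precisely when, for every pair $(u,w)$ of vertices, $u\sim w \Leftrightarrow f(u)\sim f(w)$. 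Each such condition cuts out a clopen set in the permutation topology, so being an automorphism is an intersection of countably many clopen sets, hence closed. Combined with the previous step, this gives that $\MCG^{*}(\Sigma)$ is Polish.

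For the Moreover part, any closed subgroup $\Gamma\leq\MCG^{*}(\Sigma)$ inherits a compatible complete metric (closed subspaces of completely metrizable spaces are completely metrizable) and remains separable (separability is hereditary for subspaces of separable metric spaces). I do not expect a genuine obstacle in this proof; the only potentially delicate point is the coincidence of the quotient and permutation topologies on $\MCG^{*}(\Sigma)$, but since the paper is content to cite~\cite{Vlamis_note} for closely related background facts, that comparison can be quoted rather than carried out in detail.
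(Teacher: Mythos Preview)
Your argument is correct and is the standard one. Note, however, that the paper does not actually give its own proof of this proposition: it is stated as background, with the reader referred to Vlamis' note~\cite{Vlamis_note} for details on the topology of big mapping class groups. What you have written is essentially the proof one would find (or reconstruct) from that reference: realize $\MCG^{*}(\Sigma)$ as $\Aut(C(\Sigma))$, observe this is a closed subgroup of the Polish group $\mathrm{Sym}(V(C(\Sigma)))$, and invoke the fact that closed subgroups of Polish groups are Polish. The only point requiring outside input is the coincidence of the quotient (compact-open) topology with the permutation topology, and you correctly flag this and propose to cite it rather than reprove it, exactly as the paper does.
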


We make extensive use of an alternative description for the topology of $\MCG^*(\Sigma)$, detailed in what follows.

\textbf{The curve graph}. Let $\Sigma$ be a surface. A simple closed curve on $\Sigma$ is called \emph{essential} if it is not homotopic to a point or a puncture. In the following, we often interchange an isotopy class of a curve for one of its representatives to keep the notation lighter. We define the curve graph $C(\Sigma)$ to be the graph whose vertices are (isotopy classes of) essential simple closed curves in $\Sigma$ and whose edges are defined by pairs of (isotopy classes of) curves having representatives which do not intersect. The curve graph gives a useful and concise way to think of the mapping class group:

\begin{theorem}~\cite{hernandez-etal,Bavard-Dowdall-Rafi}
  \label{THM:MCG_is_Aut}
Let $\Sigma$ be an infinite-type surface. Then the homomorphism $\MCG(\Sigma)^*\to \Aut(C(\Sigma))$ describing the natural action of $\MCG^*(\Sigma)$ on isotopy classes of curves is an algebraic isomorphism.
\end{theorem}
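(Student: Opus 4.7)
The plan is to produce inverse maps between $\MCG^*(\Sigma)$ and $\Aut(C(\Sigma))$. The natural map $\Phi\colon \MCG^*(\Sigma)\to \Aut(C(\Sigma))$ is immediate: a homeomorphism $f$ sends essential simple closed curves to essential simple closed curves and preserves disjointness up to isotopy, so $[f]$ induces an automorphism of $C(\Sigma)$ depending only on the isotopy class. The content of the theorem is therefore the bijectivity of $\Phi$. I would attack both directions by reducing to the Ivanov--Korkmaz--Luo theorem for finite-type surfaces via a principal exhaustion $S_1\subset S_2\subset\cdots$ of $\Sigma$ with $\bigcup_n S_n=\Sigma$, choosing each $S_n$ of large enough complexity (genus $\geq 3$ with enough boundary components) to dodge the sporadic low-complexity cases.

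For injectivity, take $[f]$ in the kernel of $\Phi$. For each $n$, the restriction of $f$ fixes the isotopy class of every essential simple closed curve of $S_n$ (those curves remain essential in $\Sigma$, since $S_n$ has complicated enough topology, and being fixed in $C(\Sigma)$ pulls back to being fixed in $C(S_n)$). By the classical Alexander method for finite-type surfaces, $f|_{S_n}$ is isotopic to the identity relative to $\partial S_n$. Carefully gluing these isotopies on the nested pieces, using uniqueness up to ambient isotopy and the Epstein--Baer isotopy extension theorem to match on overlaps, one produces an isotopy from $f$ to the identity on all of $\Sigma$.

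For surjectivity, I would start from the fact that the topological type of a curve, and more generally of a finite multicurve, is encoded combinatorially in $C(\Sigma)$ (separating vs.\ non-separating, topology of complementary components, links in the graph), and hence is preserved by any $\varphi\in\Aut(C(\Sigma))$. In particular, $\varphi$ sends the set of curves contained in a finite-type subsurface $S_n$ to the set of curves contained in some other finite-type subsurface $S_n'$ of the same topological type. Applying the finite-type theorem to each pair $(S_n,S_n')$ produces a homeomorphism $f_n\colon S_n\to S_n'$ realizing $\varphi|_{C(S_n)}$, unique up to isotopy; the uniqueness allows us to adjust each $f_{n+1}$ so that it agrees with $f_n$ on $S_n$ up to isotopy, and a diagonal patching then yields a homeomorphism $f\colon \Sigma\to\Sigma$ with $\Phi([f])=\varphi$.

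The main obstacle will be controlling the behavior at infinity: as $n\to\infty$, the maps $f_n$ must assemble into an honest homeomorphism of $\Sigma$, which requires that $\varphi$ induce a well-defined homeomorphism of the end space $E(\Sigma)$ compatible with the $f_n$'s on each compact piece. The idea is to read off the action on ends from the action of $\varphi$ on nested sequences of curves bounding neighborhoods of ends: since such sequences are characterized by a combinatorial property in $C(\Sigma)$ (e.g.\ being eventually disjoint from any given finite multicurve), $\varphi$ permutes them in a way that descends to a self-homeomorphism of $E(\Sigma)$. Promoting the coherent sequence $(f_n)$ to a continuous map across this end action, and then checking that the resulting global map is a homeomorphism of $\Sigma$, is the delicate step carried out in detail in \cite{hernandez-etal, Bavard-Dowdall-Rafi}.
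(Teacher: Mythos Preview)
The paper does not contain a proof of this theorem: it is quoted as a known result from \cite{hernandez-etal,Bavard-Dowdall-Rafi} and used as a black box. So there is no ``paper's own proof'' to compare against. What you have written is a reasonable high-level outline of the strategy those references actually follow (exhaust by finite-type pieces, invoke Ivanov--Korkmaz--Luo on each piece, patch), and you correctly identify that the substantive work---showing an abstract automorphism of $C(\Sigma)$ respects the topological type of subsurfaces and assembles coherently across the exhaustion and on the end space---is exactly what those papers carry out.

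One genuine gap in your injectivity sketch: from $[f]\in\ker\Phi$ you jump to ``$f|_{S_n}$ is isotopic to the identity rel $\partial S_n$,'' but $f$ need not preserve $S_n$ as a set. You must first use that $f$ fixes the isotopy classes of the boundary curves of $S_n$ to isotope $f$ so that $f(S_n)=S_n$, and only then apply the Alexander method on $S_n$. This is routine but should be said. Also, in the surjectivity direction, the assertion that ``$\varphi$ sends the set of curves contained in $S_n$ to the set of curves contained in some $S_n'$ of the same topological type'' is precisely the hard combinatorial lemma; stating it as a fact and then deferring to the references is honest, but be aware that this is where essentially all the content lies.
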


Let us denote by $C^0(\Sigma)$ the set of vertices of the curve graph. For every finite subset $A$ of $C^0(\Sigma)$ let
\begin{equation}
    \label{EQ:Basic-permutation-topo}
U_A:=\{f\in{\MCG^*(\Sigma)}: f(a)=a \text{ for all } a\in A\}.
\end{equation}
Then the set of all $f\cdot U_A$, where $A$ is as above and $f\in\MCG^*(\Sigma)$, generates the \emph{permutation topology} on $\MCG^*(\Sigma)$, which coincides with the compact-open topology defined above. The group  $\Aut(C(\Sigma))$ can also be topologized this way and $\MCG(S)^*\to \Aut(C(\Sigma))$ in Theorem~\ref{THM:MCG_is_Aut} is a homeomorphism.

The following proposition and corollary tells us that when it comes to searching for dense conjugacy classes, there is nothing to be found in extended big mapping class groups.

\begin{proposition}
  \label{normal} If a Polish group $G$ contains a closed (equivalently, open)  normal subgroup of countable (or finite) index, then $G$ does not have a dense conjugacy class.
\end{proposition}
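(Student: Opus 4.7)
The plan is to exploit the quotient structure $G/N$, where $N$ is the hypothesized closed normal subgroup of countable index (which we may assume is proper, since otherwise the hypothesis rules out nothing of interest and the statement is vacuous). The key observation is that every conjugacy class in $G$ must sit inside the preimage, under the quotient map, of a single conjugacy class in $G/N$; and for $G/N$ suitably topologized, such a preimage will turn out to be a proper clopen subset of $G$, hence cannot be dense.

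First I would verify that $G/N$ carries a natural structure of a countable discrete group. Since $N$ is closed and normal in the Polish group $G$, the quotient $G/N$ is itself a Polish group and the quotient map $\pi\colon G\to G/N$ is continuous and open. Because $[G:N]$ is countable, $G/N$ is a countable Polish group; by the Baire category theorem applied to the partition of $G/N$ into singletons, the identity must have non-empty interior, so $G/N$ is discrete. In particular every subset of $G/N$ is clopen, and $N$ itself is therefore open in $G$ (giving the ``equivalently, open'' parenthetical remark).

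Next I would combine this with the elementary fact that homomorphisms send conjugacy classes into conjugacy classes. Given any $g\in G$, let $C=g^{G}$ be its conjugacy class. Then $\pi(C)\subseteq \pi(g)^{G/N}$, so $C\subseteq \pi^{-1}\bigl(\pi(g)^{G/N}\bigr)$. Since $G/N$ is a nontrivial group it contains at least two conjugacy classes (the identity and any nontrivial class), so $\pi(g)^{G/N}$ is a proper subset of $G/N$; because $G/N$ is discrete, its preimage under $\pi$ is a proper clopen subset of $G$. Hence $C$ is contained in a proper closed set and cannot be dense in $G$.

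There is essentially no obstacle here beyond the Baire-category argument for discreteness of a countable Polish group; once that is in hand, the rest of the argument is a one-line application of the fact that a continuous homomorphism into a discrete group partitions the source into clopen fibres and each conjugacy class lies in one such fibre-union. The statement being a general fact about Polish groups, the proof does not interact with the geometry of surfaces at all.
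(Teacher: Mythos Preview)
Your proof is correct and follows essentially the same route as the paper's: both first argue that the closed countable-index normal subgroup $N$ is actually clopen (you via Baire category on the countable Polish quotient, the paper by citation), and then observe that any conjugacy class lies in a proper clopen union of $N$-cosets and so cannot be dense. The only cosmetic difference is that the paper argues directly with cosets—if a dense conjugacy class meets the open set $N$ then normality forces it to be contained in $N$, contradicting that it must also meet every other open coset—whereas you route the same observation through the quotient map and conjugacy classes in $G/N$.
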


\begin{proof}
Let $H$ be such a subgroup. Then $H$ is clopen, see~\cite{BeckerKechris}. A dense conjugacy class would have to intersect $H$ and, as $H$ is normal, hence would be contained in $H$. However, this is not possible as the dense conjugacy class would also have to intersect the open set~$gH$ for any $g \notin H$.
\end{proof}

Given that for any infinite-type surface $\Sigma$, $\MCG(\Sigma)$ is a closed, index $2$ normal subgroup of $\MCG^*(\Sigma)$, we have the following:

\begin{corollary}
  \label{BigExtendedMCGHaveNoDenseOrbits}
  Let $\Sigma$ be an infinite-type surface. Then the \emph{extended} mapping class group $\MCG^*(\Sigma)$ has no dense conjugacy classes.
\end{corollary}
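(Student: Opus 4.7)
The plan is to apply Proposition~\ref{normal} directly, taking $G = \MCG^*(\Sigma)$ and $H = \MCG(\Sigma)$. For this to work, I need to check three ingredients: that $\MCG^*(\Sigma)$ is Polish, and that $\MCG(\Sigma)$ sits inside it as a closed normal subgroup of countable (in fact, finite) index.

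The first point is exactly Proposition~\ref{Proposition:BigModsArePolish}. The index is two by the very definition of $\MCG(\Sigma)$ as the orientation-preserving mapping classes inside the full (extended) mapping class group. Normality is likewise immediate: $\MCG(\Sigma)$ is the kernel of the orientation character $\MCG^*(\Sigma)\to\Z/2\Z$ that records whether a given mapping class is represented by an orientation-preserving or orientation-reversing homeomorphism. For closedness, I would invoke the same Becker--Kechris fact cited in the proof of Proposition~\ref{normal}, which guarantees that any normal subgroup of countable index in a Polish group is automatically clopen; alternatively one can give a direct argument by observing that the orientation character is continuous with respect to the permutation topology, so its kernel is closed.

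With these points in place, Proposition~\ref{normal} applies verbatim: any dense conjugacy class in $\MCG^*(\Sigma)$ would have to meet the clopen set $\MCG(\Sigma)$, hence by normality would be entirely contained in $\MCG(\Sigma)$, and so would fail to intersect the non-empty open coset $g\MCG(\Sigma)$ for any orientation-reversing $g$. This contradiction yields the claim.

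There is no real obstacle here, since the corollary is a direct specialization of the preceding proposition; the only mildly non-trivial ingredient is the closedness of $\MCG(\Sigma)$, which is handled either by the Becker--Kechris theorem or by continuity of the orientation character. This is also why the authors present the statement as a corollary rather than a theorem.
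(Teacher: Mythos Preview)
Your proof is correct and follows exactly the paper's approach: the corollary is obtained by applying Proposition~\ref{normal} to the closed, index-two normal subgroup $\MCG(\Sigma)$ of the Polish group $\MCG^*(\Sigma)$. Your additional remarks on why $\MCG(\Sigma)$ is closed and normal are accurate elaborations of facts the paper simply states.
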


It is because of this corollary that we center our attention henceforth on $\MCG(\Sigma)$ rather than $\MCG^*(\Sigma)$.

\subsubsection{Dehn twists}
    \label{SSEC:Dehn-Twists}
    Dehn twists form a class of homeomorphisms in $\MCG(\Sigma)$ that plays a distiguished role in our proofs. The idea to define a Dehn twist is the following. Let $\alpha\subset\Sigma$ be an essential simple closed curve and $A_\alpha$ a tubular neighbourhood of $\alpha$. Modulo taking a smaller neighbourhood, we can suppose that the closure of $A_\alpha$ in $\Sigma$ is homeomorphic to the annulus
\begin{equation*}
A=\{z\in\mathbb{C}: |z|=1\}\times[0,1]
\end{equation*}
where the image of the curve $\alpha$ corresponds to $\{z\in\mathbb{C}: |z|=1\}\times\{\frac{1}{2}\}$. Then the map $(z,t)\to(e^{2\pi i t}z,t)$ defines a homeomorphism $h$ of $A$ (called a twist) which restricts to the identity on $\partial A$. Using the fact that $A$ and $A_\alpha$ are homeomorphic, one can define $\tau_\alpha\in\Homeo^+(\Sigma)$, supported and topologically conjugated to $h$ in~$A_\alpha$, which is equal to the identity in $\Sigma\setminus A_\alpha$. We call $\tau_\alpha$ a Dehn twist\,\footnote{Formally speaking, there are two possible Dehn twists about $\alpha$, once we fix an orientation on $\Sigma$: the positive Dehn twist about $\alpha$, which twists the surface in a clockwise manner with respect to the chosen orientation. The other one is called the negative Dehn twist about $\alpha$. In this article, $\tau_\alpha$ will always denote the positive Dehn twist about $\alpha$.} about $\alpha$ and, abusing notation, we also write $\tau_\alpha$ for the corresponding isotopy class. Note that~$\tau_\alpha$ is non-trivial, torsion-free and independent of the representatives chosen in the isotopy classes of $\alpha$ and $A_\alpha$. For more details, we refer the reader to Section 3.1.1 in~\cite{FarbMargalit}.

We list some properties of Dehn twists that are used in the proofs of some of our main theorems:
\begin{enumerate}
    \item Let $\alpha\subset \Sigma$ be an essential simple closed curve and $h\in\Homeo^+(\Sigma)$. Then $\tau_{h(\alpha)}=h\circ \tau_\alpha\circ h^{-1}$.
    \item Let $\alpha,\beta\subset \Sigma$ be essential simple closed curves whose geometric intersection $i(\alpha,\beta)$ is different from zero. Then $i(\tau_{\beta}^k(\alpha),\beta)$ is different from zero for every $k\in\Z$.
\end{enumerate}

We conclude with the following definition, which is important for the proof of our main results. Here, instead of curves, we refer to \emph{multicurves}, that is, locally finite, pairwise disjoint, and pairwise non-isotopic collections of essential simple closed curves in $\Sigma$.

\begin{definition}
        \label{Def:minpowerandmultitwist}
 Let $\Gamma$ be a closed subgroup of $\MCG(\Sigma)$ that contains, for every essential curve $\alpha\subset\Sigma$, a non-trivial power of the Dehn twist $\tau_\alpha$. Define $N_{\alpha}$ as the minimal positive power of $\tau_{\alpha}$ that is contained in $\Gamma$.
 Then, for each multicurve $M = \{\alpha_{i}\}_{i \in I}$, we define
 $$\tau_{M}:= \prod_{i \in I} \tau_{\alpha_i}^{N_{\alpha_i}} \in \Gamma.$$
\end{definition}

\subsection{Fraïssé classes and Fraïssé limits}

 The backbone on which most of our proofs rely is the work of Kechris--Rosendal~\cite{kechris-rosendal} and Truss~\cite{truss} on automorphism groups of countable structures. The main strategy to study topological properties of  conjugacy classes of big mapping class groups is to find, for any closed subgroup $\Gamma$ of $\MCG(\Sigma)$, an (ultrahomogeneous) countable structure whose automorphism group is isomophic to $\Gamma$ and then use the formalism of Fra\"{\i}ss\'e classes. In what follows, we recall the generalities about structures, Fra\"{\i}ss\'e classes and the process of Fra\"{\i}ss\'efication.

\textbf{Structures}. Let $X$ be a set and $n \in \N$. An $n$-ary relation $R$ is a subset\,\footnote{$R$ can be the empty relation, i.e.\ $R  = \emptyset$.} $R \subset X^n$, where we say that $(a_1, \cdots, a_n)$ \textit{satisfies} $R$ if $(a_1, \cdots , a_n) \in R$. A relational\,\footnote{In this article, all our structures are \emph{relational}, i.e.\ $\sigma$ consists only of relations. In this kind of structures, all subsets are structures themselves. This is no longer the case in structures with operations, such as groups.}  structure $\mathbf{K}$  is determined by:
\begin{itemize}
 \item \textbf{ a universe}: this is a set $K = |\mathbf{K}|$,
 \item \textbf{ a signature}: this is a collection of ``abstract'' relations $\sigma  = \{(R_\lambda, n_\lambda) : \lambda \in \Lambda \}$, where $n_\lambda \in \N$ and $\Lambda$ is a set,
 \item \textbf{an interpretation}: this is an association that gives us an $n_\lambda$-ary relation $R_\lambda^{\mathbf{K}} \subset K^{n_\lambda}$ for every element $(R_\lambda, n_\lambda) \in \sigma$.
\end{itemize}

For a structure $\mathbf{K}$, a substructure $\mathbf{S}$ of $\mathbf{K}$ is determined by a subset~$S$ of the universe~$K$ where $\mathbf{S}$ has the same signature as $\mathbf{K}$ and whose interpretation function is given by the identity
 \[	R_\lambda^{\mathbf{S}} = (R_\lambda^{\mathbf{K}} \cap S^{n_\lambda})	\]
for every $(R_\lambda, n_\lambda) \in \sigma$.

\emph{Isomorphisms between structures}.  For two structures $\mathbf{K}$ and $\mathbf{T}$ with the same signature, an isomorphism between $\mathbf{K}$ and $\mathbf{T}$ is a bijection between their universes
 \[	F : |\mathbf{K}| \rightarrow |\mathbf{T}|	\]
such that
\begin{equation}
    \label{EQ:Relation-preservation}
 	F(R_\lambda^{\mathbf{K}}) = R_\lambda^{\mathbf{T}}\hspace{2mm}\text{and}\hspace{2mm}F^{-1}(R_\lambda^{\mathbf{T}}) = R_\lambda^{\mathbf{K}}
\end{equation}
for every $(R_\lambda, n_\lambda) \in \sigma$.
A structure $\mathbf{S}$ is \emph{embeddable} in another structure $\mathbf{K}$ if there is an isomorphism between $\mathbf{S}$ and a substructure of $\mathbf{K}$.

The following properties help to determine topological properties of the automorphism group of a countable structure. Here and in the following, when we consider sets of structures, all the structures are supposed to have the same given signature.

\begin{definition}
    \label{Def:JEP-H-A}
Let $\mathcal{K}$ be a countable set of finite non-isomorphic structures. We say that~$\mathcal{K}$ satisfies:
\begin{itemize}
  \item[\textbf{(HP)}] If $A \in \mathcal{K}$ and $B \subset A$ is a substructure, then $B$ is isomorphic to an element of $\mathcal{K}$. Here {\rm \textbf{HP}} abbreviates the \emph{hereditary property}.

  \item[\textbf{(JEP)}] If $A, B \in \mathcal{K}$ are two given elements, then there exists an element $C \in \mathcal{K}$ such that $A, B$ are embeddable in $C$. Here {\rm \textbf{JEP}} abbreviates the \emph{joint embedding property}.

  \item[\textbf{(AP)}] If $A,B,C \in \mathcal{K}$ are structures together with embeddings
        \[	f:A \rightarrow B, \qquad g: A \rightarrow C,	\]
    then there exist an element $D \in \mathcal{K}$ and embeddings
        \[	f': B \rightarrow D, \qquad g': C \rightarrow D,	\]
    such that $f' \circ f = g' \circ g$. That is, the following diagram commutes:

        \begin{center}
        \begin{tikzpicture}
         \matrix (m) [matrix of math nodes,row sep=1em,column sep=2em,minimum width=2em]{
           & B & \\
          A & & D \\
           & C & \\};
      \path[right hook->]
        (m-2-1) edge node[above]{$f$} (m-1-2)
        (m-1-2) edge node[above]{$f'$} (m-2-3)
        (m-2-1) edge node[below]{$g$} (m-3-2)
        (m-3-2) edge node[below]{$g'$} (m-2-3);
    \end{tikzpicture}
    \end{center}
Here {\rm {\rm \textbf{AP}}} abbreviates the \emph{amalgamation property}.

\end{itemize}
\end{definition}

Given a countable structure $\mathbf{K}$ (i.e.\ the universe and signature are countable), consider the class of finite substructures modulo isomorphism in $\mathbf{K}$, that is
 \[	\mathcal{K}_\mathbf{K} = \{ \mathbf{S}  \textrm { finite structure} : 	\mathbf{S}  \textrm { is embeddable in } \mathbf{K} \} / \textrm{isomorphism} .\]

\begin{theorem}\cite[Theorem 7.1.1]{Hu} \label{Theorem:finite_substructures}
If $\mathcal{K}$ is a countable set of finite non-isomorphic structures, then there exists a countable structure $\mathbf{K}$ such that $\mathcal{K} = \mathcal{K}_\mathbf{K}$ if and only if $\mathcal{K}$ satisfies~{\rm \textbf{HP}} and~{\rm \textbf{JEP}}.
\end{theorem}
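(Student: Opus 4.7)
My plan is to prove both directions of the equivalence separately. For the easy direction, suppose $\mathcal{K} = \mathcal{K}_\mathbf{K}$ for some countable structure $\mathbf{K}$. Then \textbf{HP} is immediate: if $A \in \mathcal{K}$, then $A$ is (up to isomorphism) a finite substructure of $\mathbf{K}$, and any substructure $B \subseteq A$ is itself a finite substructure of $\mathbf{K}$, hence isomorphic to an element of $\mathcal{K}$. For \textbf{JEP}, given $A, B \in \mathcal{K}$, fix embeddings into $\mathbf{K}$ and let $C$ be the finite substructure of $\mathbf{K}$ whose universe is the union of the two images; then $C \in \mathcal{K}_\mathbf{K} = \mathcal{K}$ and both $A$ and $B$ embed into $C$.

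For the non-trivial direction, assume that $\mathcal{K}$ satisfies \textbf{HP} and \textbf{JEP}, and enumerate $\mathcal{K} = \{A_n : n \in \N\}$. The strategy is a standard chain construction: build inductively a sequence $B_1 \subseteq B_2 \subseteq \cdots$ of elements of $\mathcal{K}$ so that each $A_n$ is isomorphic to a substructure of $B_n$. Set $B_1 = A_1$. Given $B_n \in \mathcal{K}$, apply \textbf{JEP} to $B_n$ and $A_{n+1}$ to obtain $B_{n+1} \in \mathcal{K}$ together with embeddings of both $B_n$ and $A_{n+1}$ into $B_{n+1}$; after identifying $B_n$ with its image, we may regard $B_n$ as a substructure of $B_{n+1}$.

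Next, define $\mathbf{K}$ as the union of this chain, with universe $|\mathbf{K}| := \bigcup_n |B_n|$ and interpretation $R_\lambda^\mathbf{K} := \bigcup_n R_\lambda^{B_n}$ for each $(R_\lambda, n_\lambda)$ in the common signature $\sigma$. Then $\mathbf{K}$ is countable, and since each $A_n$ embeds into $B_n \subseteq \mathbf{K}$, we have $\mathcal{K} \subseteq \mathcal{K}_\mathbf{K}$. Conversely, any finite substructure $S \subseteq \mathbf{K}$ has universe contained in some $B_n$, so $S$ is a substructure of $B_n \in \mathcal{K}$, and hence is isomorphic to an element of $\mathcal{K}$ by \textbf{HP}.

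The only step that requires a bit of bookkeeping is the inductive construction: one must ensure that the embeddings provided by successive applications of \textbf{JEP} can be threaded consistently so that $B_n$ genuinely appears as a substructure of $B_{n+1}$ (rather than merely embedding into it up to isomorphism), and that the colimit has the correct induced relations. This is automatic here because the structures are \emph{relational}, so the image of any substructure under an embedding carries exactly the induced relations and may be identified with the original without altering the data. Everything else is a formal verification, so I do not anticipate any serious obstacle beyond this identification.
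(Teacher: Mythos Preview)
Your proof is correct and follows essentially the same approach as the paper: both enumerate $\mathcal{K}$, build a chain $B_0 \subseteq B_1 \subseteq \cdots$ via successive applications of \textbf{JEP}, take $\mathbf{K}$ to be the union, and invoke \textbf{HP} for the reverse inclusion. Your write-up is slightly more detailed about the bookkeeping (the identification of $B_n$ with its image and the definition of $R_\lambda^{\mathbf{K}}$), but the argument is the same.
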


\begin{proof}[Sketch of proof.]
If $\mathcal{K} = \mathcal{K}_\mathbf{K}$, it is clear from the definition of $\mathcal{K}_\mathbf{K}$ that it has to fulfill~\textbf{HP} and~\textbf{JEP}. For the other direction, enumerate the elements of~$\mathcal{K}$ as $(A_n : n \in \N )$  and construct a chain of structures
 \[	B_0 \subset B_1 \subset \cdots \subset B_m \subset ...	\]
where $B_0 = A_0$ and $B_{m+1}$ is an element of $\mathcal{K}$ that contains isomorphic copies of~$A_{m+1}$ and~$B_m$ (the existence of such an element is assured by {\rm \textbf{JEP}}). The desired structure is
 \[	\mathbf{K} = \bigcup_{n \in \N} B_n.\]
 By construction, every element of $\mathcal{K}$ is a substructure of $\mathbf{K}$ and by \textbf{HP}, every finite substructure of $\mathbf{K}$ is an element of $\mathcal{K}$, hence $\mathcal{K} = \mathcal{K}_\mathbf{K}$.
\end{proof}

\begin{example}
If $\mathbf{K}$ is either $\Z$ or $\Q$, considered as a substructure of $\R$ with the binary relation
 \[	R = \{ (a,b) : a \leq b \} \subset \R^2	\]
given by inequality, then $\mathcal{K}_{\mathbf{K}}$ is the class of finite linear orders (in both cases).
\end{example}

\textbf{Fra\"iss\'e's construction}. The amalgamation property {\rm \textbf{AP}} can be rephrased in words as follows: if two finite structures in $\mathcal{K}$ share a common substructure, then those structures can be embedded in a bigger structure in $\mathcal{K}$ such that the common structure is contained in the intersection. In this section we recall that using this property, \textbf{HP} and \textbf{JEP}, one can define the limit of a class of finite structures.

\begin{definition}
    \label{Def:Ultrahomogeneous}
Let $\mathbf{K}$ be a structure. We say that $\mathbf{K}$ is \emph{ultrahomogeneous} if an isomorphism of any two finite substructures can be extended to a global isomorphism of~$\mathbf{K}$.
\end{definition}

For example, the bijection
	\[	\{1,2,3\} \rightarrow \{1,2,4\}	\]
fixing $1$ and $2$, and sending $3$ to $4$, is an isomorphism of linear orderings that cannot be extended to a global isomorphism of $\Z$, so $\Z$ is not an ultrahomogeneous structure.

The amalgamation property \textbf{AP} of a set $\mathcal{K}$ of structures is strongly related to the corresponding structure $\mathbf{K}$ being ultrahomogeneous as can be seen in the following extension of Theorem~\ref{Theorem:finite_substructures}.

\begin{theorem}\cite[Theorem~7.1.2]{Hu}
  \label{Theorem:Fraisse_Limit}
	If $\mathcal{K}$ is a countable set of finite non-isomorphic structures, then there exists a countable, \textbf{ultrahomogeneous} structure $\mathbf{K}=\mathbf{K}_\mathcal{F}$ which is unique up to isomorphism such that $\mathcal{K} = \mathcal{K}_\mathbf{K}$ if and only if $\mathcal{K}$ satisfies {\rm \textbf{HP}} , {\rm \textbf{JEP}} and {\rm \textbf{AP}}.
\end{theorem}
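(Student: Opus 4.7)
The plan is to prove both implications of the equivalence. For the forward direction, assume $\mathbf{K}$ is a countable ultrahomogeneous structure with $\mathcal{K} = \mathcal{K}_{\mathbf{K}}$. Properties \textbf{HP} and \textbf{JEP} follow essentially as in the sketch of Theorem~\ref{Theorem:finite_substructures}: \textbf{HP} is immediate since substructures of finite substructures of $\mathbf{K}$ are again finite substructures of $\mathbf{K}$, and for \textbf{JEP}, given $A,B\in \mathcal{K}$, fix embeddings $A,B\hookrightarrow \mathbf{K}$ and take for $C$ the substructure of $\mathbf{K}$ carried by the (finite) union of the two images. For \textbf{AP}, given $f:A\to B$ and $g:A\to C$ with $A,B,C\in \mathcal{K}$, fix embeddings $\iota_B:B\hookrightarrow \mathbf{K}$ and $\iota_C:C\hookrightarrow \mathbf{K}$. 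The images $\iota_B(f(A))$ and $\iota_C(g(A))$ are two finite substructures of $\mathbf{K}$ isomorphic via $\iota_C\circ g\circ (f^{-1})\circ \iota_B^{-1}$; ultrahomogeneity extends this partial isomorphism to an automorphism $\varphi$ of $\mathbf{K}$. Then $\varphi\circ \iota_B$ and $\iota_C$ agree on the image of $A$, and $D$ can be taken to be the finite substructure of $\mathbf{K}$ carried by $\varphi(\iota_B(B))\cup \iota_C(C)$.

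For the reverse direction, suppose $\mathcal{K}$ satisfies \textbf{HP}, \textbf{JEP} and \textbf{AP}. I would build $\mathbf{K}$ as the union of an increasing chain $B_0\subset B_1\subset\cdots$ of elements of $\mathcal{K}$ designed to force a \emph{one-point extension property}: for every finite substructure $X$ of $\mathbf{K}$ and every $T\in \mathcal{K}$ equipped with an embedding $j:X\hookrightarrow T$, the embedding $j$ extends to an embedding $T\hookrightarrow \mathbf{K}$. Enumerate all triples $(n, X, T, j)$ with $X\subset B_n$ finite and $j:X\hookrightarrow T\in \mathcal{K}$ (there are only countably many up to isomorphism because $\mathcal{K}$ is countable and each $B_n$ is finite). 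At stage $m$, pick the next unhandled triple $(n,X,T,j)$ and apply \textbf{AP} to the span $B_n\hookleftarrow X\hookrightarrow T$ to obtain an amalgam $B_{m+1}\in \mathcal{K}$ extending $B_m$ (which itself extends $B_n$) and containing a copy of $T$ whose intersection with $B_n$ is $X$. Interleaving an enumeration of $\mathcal{K}$ and applying \textbf{JEP} when needed ensures that every element of $\mathcal{K}$ eventually embeds into some $B_m$, so $\mathcal{K}=\mathcal{K}_{\mathbf{K}}$.

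To see that the resulting $\mathbf{K}$ is ultrahomogeneous, I would run a classical back-and-forth argument. Given finite substructures $X,Y\subset \mathbf{K}$ and an isomorphism $\phi:X\to Y$, enumerate $\mathbf{K}=\{k_i : i\in \mathbb{N}\}$ and build an increasing sequence $\phi=\phi_0\subset \phi_1\subset\cdots$ of finite partial isomorphisms of $\mathbf{K}$. At odd stage one extends the domain to include $k_i$: set $X':=\mathrm{dom}(\phi_n)\cup\{k_i\}$, let $T$ be the substructure of $\mathbf{K}$ carried by $X'$, and apply the one-point extension property on the codomain side (via $\phi_n^{-1}$ transferred to $\mathrm{range}(\phi_n)$) to find an embedding extending $\phi_n^{-1}$ and thus $\phi_{n+1}\supset \phi_n$ with $k_i\in\mathrm{dom}(\phi_{n+1})$. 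At even stages one extends the range symmetrically. The union $\bigcup_n \phi_n$ is then an automorphism of $\mathbf{K}$ extending $\phi$. Uniqueness of $\mathbf{K}_{\mathcal{F}}$ up to isomorphism follows from the same back-and-forth scheme applied between any two countable ultrahomogeneous structures having $\mathcal{K}$ as their age.

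The main obstacle is the book-keeping in the second paragraph: one must enumerate the countably many ``extension tasks'' $(X,T)$ in such a way that tasks created at late stages (by the appearance of new finite substructures $X\subset B_m$) are still handled, while at each stage the amalgam remains in $\mathcal{K}$ so that \textbf{AP} and \textbf{JEP} keep applying. A standard diagonal enumeration suffices, but making this precise is the bulk of the work; once done, the ultrahomogeneity and uniqueness arguments are routine applications of the back-and-forth method.
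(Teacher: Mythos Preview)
Your outline is the standard proof of Fra\"{\i}ss\'e's theorem and is correct in its essentials. Note, however, that the paper does not actually prove this statement: it is quoted as Theorem~7.1.2 from the reference~\cite{Hu} (Hodges) and no proof is given in the paper itself, so there is nothing to compare against beyond the cited source, whose argument is essentially the one you describe.
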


\begin{definition}
  Let $\mathcal{F}$ be a countable (up to isomorphism) class that contains structures of arbitrarily large, but finite cardinality. The class $\mathcal{F}$ is called a \emph{Fra\"{\i}ss\'e class} if it satisfies {\rm \textbf{HP}}, {\rm \textbf{JEP}} and {\rm \textbf{AP}}. The corresponding ultrahomogeneous structure~$\mathbf{K}_\mathcal{F}$ given by Theorem~\ref{Theorem:Fraisse_Limit} is called the \emph{Fra\"{\i}ss\'e limit} of $\mathcal F$.
\end{definition}

Observe that the partial isomorphism
	\[	\{1,2,3\} \rightarrow \{1,2,4\}	\]
of linear orderings can be extended to the automorphism $F: \Q \rightarrow \Q$, defined by
	\[	F(x) = \begin{cases}
			x, & x \leq 2 \\
			2x - 2, & 2 \leq x \leq 3 \\
			x + 1, & 3 \leq x
		\end{cases}.	\]
In general, we can extend any isomorphism of finite linear orderings (which can be realized as finite substructures of $\Q$) to an automorphism of $\Q$, using as above piecewise linear maps. This gives us the ultrahomogeneity property and thus, the fact that $\Q$ is the Fra\"iss\'e limit of finite linear orders.

\subsection{Fra\"{\i}ss\'e classes and automorphism groups of countable structures} We outline below how Fra\"{\i}ss\'e theory can be used to analyze the automorphism groups of countable structures. As before, we follow ideas and notations from~\cite{kechris-rosendal, truss}.

\smallskip

\begin{definition}
    \label{DEF:Class-F_p}
    Given a Fra\"{\i}ss\'e class $\mathcal F$, we denote by $\mathcal F_p$ the class of all $ S = \langle A,\psi : B \to C\rangle $, where $A,B,C \in\mathcal F$, $B, C \subseteq A$  and $\psi$ is an isomorphism between~$B$ and $C$. We say that $ S = \langle A,\psi : B \to C\rangle $ \emph{embeds into} $ S' = \langle A',\psi' : B' \to C'\rangle $ if there is an embedding $f:A\to A'$ such that
    $f$ embeds $B$ into $B'$ and $C$ into $C'$ and $f\circ\psi\subseteq \psi'\circ f$, \emph{i.e.}\ the function $\psi'\circ f$ extends the function $f\circ\psi$.
    This can be summarized in the following diagram\,\footnote{Note that this diagram is not commutative.}:

 \begin{center}
 \begin{tikzpicture}
 \matrix (m) [matrix of math nodes,row sep=1em,column sep=2em,minimum width=2em]{
   B' & & & \\
   & B & & \\
    & & A & A' \\
   & C & & \\
   C' & & &\\};
 \path[right hook->]
   (m-2-2) edge node[below]{$f$} (m-1-1)
   (m-4-2) edge node[above]{$f$} (m-5-1)
   (m-3-3) edge node[above=-3pt]{$f$} (m-3-4);
 \path[->]
   (m-2-2) edge node[above]{$\subset$} (m-3-3)
   (m-4-2) edge node[above]{$\subset$} (m-3-3)
   (m-2-2) edge node[left]{$\psi$} (m-4-2)
   (m-1-1) edge node[left]{$\psi'$} (m-5-1);
 \path[->, bend left=15]
   (m-1-1) edge node[above]{$\subset$} (m-3-4);
 \path[->, bend right=15]
   (m-5-1) edge node[below]{$\subset$} (m-3-4);
 \end{tikzpicture}
 \end{center}
\end{definition}

\begin{remark}
    \label{rmk:A=BUC}
Given $\langle A,\psi : B \to C\rangle$ in a Fra\"{\i}ssé class, we can suppose without loss of generality that $A=B\cup C$. For this, we carefully choose representatives in the isomorphism classes $A$, $B$ and $C$ such that $\psi$ can be extended to $\psi' : A \to \psi'(A)$ by ultrahomogeneity. Then we can choose $A' = A \cup \psi'(A)$, $B'=A$ and $C'=\psi'(A)$.
\end{remark}

\begin{remark}
    \label{rk:JEP_in_Fp}
It should be clear from the definitions what {\bf JEP} for
$\mathcal F_p$ means but we shall spell it out to avoid confusion. The class $\mathcal F_p$ satisfies {\bf JEP} if for all $A,B,C, A',B',C' \in\mathcal F$ such that  $B, C\subseteq A$ and $B', C'\subseteq A'$ with isomorphisms $\psi:B \to C$ and $\psi':B' \to C'$, there is an $A''$ and embeddings $f:A \to A''$ and $f': A'\to A''$ such that
\[
\psi'': f(B)\cup f'(B') \to f(C)\cup f'(C')
\]
defined by
$\psi'' (f(b))= f(\psi(b))$ if $b\in B$ and
$\psi'' (f'(b))= f'(\psi'(b))$ if $b\in B'$,
is an isomorphism.
\end{remark}

We consider one last property for our study of conjugacy classes.

\begin{definition}
    \label{DEF:WAP}
We say that a countable set of finite non-isomorphic structures~$\mathcal{K}$ satisfies~{\bf WAP} if for any $ S\in\mathcal F $, there is a $ T \in \mathcal F$ and an embedding $e:S\to T$, such that for any pair of embeddings $f : T \to T_0$ and $g : T \to  T_1$, where $T_0,T_1 \in\mathcal F$, there is a $U \in\mathcal F$ and embeddings $r : T_0 \to U$, $s : T_1 \to U$ such that $r \circ f \circ e = s \circ g \circ e$.
\begin{center}
    \begin{tikzpicture}
 \matrix (m) [matrix of math nodes,row sep=1em,column sep=2em,minimum width=2em]{
  & & T_0 & \\
  S & T & & U \\
  & & T_1 & \\};
\path[right hook->]
(m-2-1) edge node[above]{$e$} (m-2-2)
(m-2-2) edge node[above]{$f$} (m-1-3)
(m-1-3) edge node[above]{$r$} (m-2-4)
(m-2-2) edge node[below]{$g$} (m-3-3)
(m-3-3) edge node[below]{$s$} (m-2-4);
\end{tikzpicture}
\end{center}
Here {\bf WAP} abbreviates the \emph{weak amalgamation property}. We shall say that $\mathcal{F}$ satisfies $\bf{WAP}$ \textbf{locally} if there exists $A\in\mathcal{F}$ such that $\bf{WAP}$ holds for all $B\in\mathcal{F}$ into which $A$ embeds. The definition of these properties for $\mathcal{F}_p$ is analogous.
\end{definition}

Our main results use the following theorem, which integrates several statements that can be found in~\cite{kechris-rosendal}.

\begin{theorem}
    \label{Theorem:Criteria}
  Let $\mathcal{F}$ be a Fra\"{\i}ss\'e class with Fra\"{\i}ss\'e limit $\mathbf K$. Then:
  \begin{enumerate}
      \item $\Aut(\mathbf K)$ has a dense conjugacy class iff $\mathcal F_p$ satisfies {\bf JEP},
      \item  $\Aut(\mathbf K)$ has a co-meager conjugacy class iff $\mathcal F_p$ satisfies
      {\bf JEP} and {\bf WAP},
      \item $\Aut(\mathbf K)$ has a non-meager conjugacy class iff $\mathcal F_p$ satisfies {\bf local WAP}.
  \end{enumerate}
\end{theorem}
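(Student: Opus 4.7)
The plan is to transfer every statement about the topology of $G := \Aut(\mathbf{K})$ into combinatorics of $\mathcal{F}_p$ via a correspondence between basic open sets in $G$ and elements of $\mathcal{F}_p$. A basis for the permutation topology on $G$ consists of cosets of pointwise stabilizers $U_A$ of finite subsets $A \subset \mathbf{K}$, and for $g \in G$ the coset $gU_A$ encodes the isomorphism $g \restriction A : A \to g(A)$. Setting $B := A \cap g^{-1}(A)$ and $C := g(B)$, this data packages into an element $\langle A \cup g(A), g \restriction B : B \to C \rangle \in \mathcal{F}_p$; conversely, ultrahomogeneity of $\mathbf{K}$ ensures every $S \in \mathcal{F}_p$ arises this way. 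Since conjugating $g$ by $h \in G$ replaces $A$ by $h(A)$ and yields an isomorphic element of $\mathcal{F}_p$, the conjugacy class $g^G$ is entirely determined by the collection of $\mathcal{F}_p$-types it realizes.

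For item (1), density of $g^G$ in $G$ amounts to the statement that every $S \in \mathcal{F}_p$ is realized by some conjugate of $g$. For the forward direction: given $S_0, S_1 \in \mathcal{F}_p$, use density to locate conjugates of $g$ realizing $S_0$ and $S_1$ on disjoint finite subsets of $\mathbf{K}$; the finite substructure they together generate, together with the induced partial isomorphism, witnesses \textbf{JEP} for $\mathcal{F}_p$. For the converse: enumerate $\mathcal{F}_p$ and perform a back-and-forth construction using \textbf{JEP} in $\mathcal{F}_p$ together with ultrahomogeneity of $\mathbf{K}$ to produce a single $g \in G$ realizing every isomorphism type in $\mathcal{F}_p$; by the translation above, $g^G$ is then dense.

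For items (2) and (3), one invokes standard Baire-category facts for Polish groups acting continuously: an analytic orbit is comeager iff it is non-meager and dense $G_\delta$, and an orbit that is non-meager in some open set is comeager in that set (Effros-type theorem). Translating into $\mathcal{F}_p$, the orbit $g^G$ is non-meager in the basic open set determined by $S \in \mathcal{F}_p$ precisely when $S$ admits an extension $T \in \mathcal{F}_p$ such that any two further extensions of $T$ admit a common amalgamation in $\mathcal{F}_p$ — exactly \textbf{local WAP} at $S$. Requiring this at every $S$ gives \textbf{WAP}, which combined with \textbf{JEP} from item (1) forces the orbit to be dense and non-meager in every basic open set, hence comeager by the Effros dichotomy. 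The main technical obstacle is the bookkeeping in $\mathcal{F}_p$: embeddings there must preserve the compatibility condition $f \circ \psi \subseteq \psi' \circ f$ from Definition~\ref{DEF:Class-F_p}, and one must verify that abstract amalgamations in $\mathcal{F}_p$ faithfully correspond to amalgamating finite traces of conjugacy-class representatives in $G$, otherwise the back-and-forth construction and the Baire-category translation break down.
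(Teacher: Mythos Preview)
The paper does not prove this theorem. It is stated without proof as a compilation of results from Kechris--Rosendal~\cite{kechris-rosendal} (with Truss~\cite{truss} in the background); the only accompanying text is the sentence ``Our main results use the following theorem, which integrates several statements that can be found in~\cite{kechris-rosendal}.'' There is therefore nothing in the paper to compare your proposal against.

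As an independent sketch of the Kechris--Rosendal argument, your outline has the right architecture: the correspondence between basic open sets $gU_A$ and elements of $\mathcal{F}_p$ via ultrahomogeneity, the back-and-forth from \textbf{JEP} to build a generic element, and the Baire-category/Effros translation for \textbf{WAP} and \textbf{local WAP}. If you intend to expand it into a self-contained proof, the two places that need genuine work are the converse of (1)---you must argue carefully that an element realizing every $\mathcal{F}_p$-type has dense conjugacy class, which amounts to checking that every nonempty basic open set is determined by some $S\in\mathcal{F}_p$ and that realization can always be achieved by conjugation---and the precise invocation of the Effros dichotomy for orbits of a Polish group acting on itself by conjugation. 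But for the purposes of this paper, the theorem is simply quoted, and your sketch already goes well beyond what the authors provide.
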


\textbf{Fra\"{\i}ss\'efication}. There is a standard process to turn any countable structure $\mathbf K$ into a Fra\"{\i}ss\'e limit by enlarging the signature. We discuss the generalities of the process in what follows. For a given structure $\mathbf{K}$, denote the group of automorphisms of $\mathbf{K}$ by $\Aut(\mathbf{K})$. Construct a new countable structure $\widetilde{\mathbf{K}}$ having the same universe as $\mathbf{K}$, all the relations of~$\mathbf{K}$ and, for every $n \in \N$ and~$\overline{b} \in K^n$,
add to the signature
	\[	(R_{\overline{b}},n) = \{ \overline{a} \in K^n : \overline{a} \ E_{\mathbf{K}} \ \overline{b} \} \subset K^n,	\]
where the equivalence relation
	\[	(a_1, \ldots , a_n) E_{\mathbf{K}} (b_1, \ldots , b_n)	\]
holds if and only if there is an element $h \in \Aut(\mathbf{K})$ such that $h(a_j) = b_j$ for all $j\in \{1,\ldots,n\}$.
The interpretation of $(R_{\overline{b}},n)$, which we denote by $R_n(\overline{b})$, is the $\Aut(\mathbf{K})$-orbit\,\footnote{The $\Aut(\mathbf{K})$-action on $K^n$, which we consider here, is $g\cdot(b_1,\ldots,b_n):=(g(b_1),\ldots,g(b_n))$.} of $\overline{b}\in K^n$.

\begin{proposition}
  \label{Proposition:Fraissefication_Automorphism_Group}
If $\mathbf{K}$ is a countable structure and $\widetilde{\mathbf{K}}$ is the structure with the enlarged language as above, then
	\[	\Aut(\widetilde{\mathbf{K}}) = \Aut(\mathbf{K})	\]
and $\widetilde{\mathbf{K}}$ is ultrahomogeneous.
\end{proposition}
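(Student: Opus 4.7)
The plan is to verify the two claims separately and in the natural order. First I would prove the equality $\Aut(\widetilde{\mathbf{K}})=\Aut(\mathbf{K})$ as sets. The inclusion $\Aut(\widetilde{\mathbf{K}})\subseteq \Aut(\mathbf{K})$ is immediate because $\widetilde{\mathbf{K}}$ contains all the relations of $\mathbf{K}$ in its signature, so any bijection of the universe preserving the relations of $\widetilde{\mathbf{K}}$ in particular preserves the relations of $\mathbf{K}$. For the reverse inclusion, let $g\in\Aut(\mathbf{K})$; I would check that $g$ preserves every new relation $R_n(\overline{b})$. If $\overline{a}\in R_n(\overline{b})$, there is $h\in\Aut(\mathbf{K})$ with $h(\overline{a})=\overline{b}$, and then $h\circ g^{-1}\in\Aut(\mathbf{K})$ sends $g(\overline{a})$ to $\overline{b}$, showing $g(\overline{a})\,E_{\mathbf{K}}\,\overline{b}$, i.e.\ $g(\overline{a})\in R_n(\overline{b})$. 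The same argument applied to $g^{-1}$ gives the other half of the relation-preservation condition in~\eqref{EQ:Relation-preservation}.

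Second, I would establish ultrahomogeneity of $\widetilde{\mathbf{K}}$. Let $A,B\subseteq|\widetilde{\mathbf{K}}|$ be finite substructures and let $\varphi:A\to B$ be an isomorphism. Enumerate $A=\{a_1,\ldots,a_n\}$ and set $b_i:=\varphi(a_i)$. The key observation is that $\overline{a}=(a_1,\ldots,a_n)$ trivially satisfies $R_n(\overline{a})$ in $\widetilde{\mathbf{K}}$ (via $h=\mathrm{id}$), hence it satisfies it in the substructure $A$, and because $\varphi$ preserves this relation, $\overline{b}=(b_1,\ldots,b_n)$ satisfies $R_n(\overline{a})$ in $B$, i.e.\ $\overline{b}\,E_{\mathbf{K}}\,\overline{a}$. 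By definition of $E_{\mathbf{K}}$, there is therefore some $g\in\Aut(\mathbf{K})$ with $g(a_i)=b_i$ for every $i$. By the first part, $g\in\Aut(\widetilde{\mathbf{K}})$, and by construction $g$ extends $\varphi$, proving ultrahomogeneity.

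The only step that requires a moment of care is making sure that the new relations $R_n(\overline{b})$ are \emph{all} preserved by elements of $\Aut(\mathbf{K})$; but since $R_n(\overline{b})$ is precisely an $\Aut(\mathbf{K})$-orbit, this reduces to the trivial fact that the group action permutes its own orbits. I do not foresee a genuine obstacle: the whole content of the proposition is that enlarging the signature by orbit-relations is exactly enough to force any partial isomorphism of finite substructures to match orbit-equivalent tuples, which is by definition the condition for extendibility to a global automorphism.
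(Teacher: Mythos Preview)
Your proof is correct and follows essentially the same approach as the paper's own proof: both halves---the equality of automorphism groups via the observation that the new relations are $\Aut(\mathbf{K})$-orbits and hence invariant, and ultrahomogeneity via applying the orbit relation $R_n(\overline{a})$ to the tuple $\overline{a}$ enumerating the domain---match the paper's argument almost line for line. The only cosmetic difference is that the paper verifies orbit-invariance set-wise ($h(R_n(\overline{b}))=R_n(\overline{b})$) while you do it element-wise, which is immaterial.
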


\begin{proof}
As we enlarge the signature from $\mathbf{K}$ to $\widetilde{\mathbf{K}}$, we immediately have that $\Aut(\widetilde{\mathbf{K}}) \subset \Aut(\mathbf{K})$. Take $h \in \Aut(\mathbf{K})$ and $\overline{b} \in K^n$, then we have that
	\[	R_n(\overline{b}) = \{ f(\overline{b}) : f \in \Aut(\mathbf{K}) \},	\]
which implies that
	\[	h(R_n(\overline{b})) = \{ h \circ f(\overline{b}) : f \in \Aut(\mathbf{K}) \} = R_n(\overline{b}),	\]
that is, $h$ preserves all the new relations and thus $h \in \Aut(\widetilde{\mathbf{K}})$, so we also have $\Aut(\mathbf{K}) \subset \Aut(\widetilde{\mathbf{K}})$.

To prove the ultrahomogeneity of $\widetilde{\mathbf{K}}$, let $g : A \rightarrow B$ be an isomorphism of finite substructures $A,B \subset \widetilde{\mathbf{K}}$.
Enumerate $A$ as $\langle a_1, \cdots , a_m \rangle$ and take the vector $\overline{a} = (a_1, \cdots , a_m) \in A^m$. We show that $\overline{a}$ and $g(\overline{a})$ are equivalent. Recall that, for every $n \in \N$ and $\overline{b} \in K^n$, the relation $R_n(\overline{b})$ interprets as
	\[	R_n(\overline{b}) \cap A^n \quad \textrm{and} \quad R_n(\overline{b}) \cap B^n	\]
in $A$ and $B$, respectively. As $g$ is an isomorphism of structures, we have that
	\[	g(R_n(\overline{b}) \cap A^n) = R_n(\overline{b}) \cap B^n,	\]
because it has to send the interpretation on $A$ to the interpretation on $B$ of the same relation.
By taking $n=m$ and $\overline{b}=\overline{a}$ we get
	\[	g(\overline{a}) \in R_m(\overline{a}) \cap B^m \subset R_m(\overline{a}),	\]
which tells us that there is an automorphism $h \in \Aut(\mathbf{K})$ such that $g(\overline{a}) = h(\overline{a})$.
As $g$ is a bijection, $\langle g(a_1), \cdots , g(a_m) \rangle$ is an enumeration of $B$ and $h \in \Aut(\mathbf{K}) = \Aut(\widetilde{\mathbf{K}})$ is an automorphism that extends $g$.
\end{proof}

\begin{remark}
 If $\mathbf{K}$ is already ultrahomogeneous, then it is in general false that $\widetilde{\mathbf{K}} \cong \mathbf{K}$. As an example of this, we can consider $(\Q, \leq)$ which is already ultrahomogeneous but only has binary relations, whereas $\widetilde{\Q}$ will have $n$-ary relations, for larger $n$.
\end{remark}

\begin{example}
Consider $\Z$ as a structure with the binary relations given by inequality, then $\Aut(\Z) \cong \Z$. The new relations in $\widetilde{\Z}$ are just sets of arithmetic progressions
	\[	\{ (a_1 + k, \cdots , a_n + k) : k \in \Z \} \subset \Z^n.	\]
Hence, we can see this enriched structure on $\Z$ as a Fra\"iss\'e limit of finite linear orders, but these finite structures are also enriched by the relations of finite arithmetic progressions.
\end{example}

\subsection{Fra\"{i}ss\'efying the curve graph}\label{SSEC:Fraissefication} Any countable graph is a structure: the universe is the vertex set, signature and interpretation are given by the edges. In particular, for any surface $\Sigma$, the curve graph $C(\Sigma)$ is a structure\,\footnote{Edges give only binary relations. If we want to consider $C(\Sigma)$ as an abstract simplicial complex, we have to consider higher order relations. However, since the curve complex is a flag complex, all relations are induced by the binary relations corresponding to edges.}, but it is not ultrahomogeneous in general. This is due to Theorem~\ref{THM:MCG_is_Aut}, which tells us that automorphisms of $C(\Sigma)$ are geometric, \emph{i.e.}\ are induced by mapping classes. Indeed, consider for example the Cantor tree surface  $\Sigma:=\mathbb{S}^2\setminus Cantor$; it is not difficult to find two triangles $\{a_i,b_i,c_i\}_{i=1,2}$ in $C(\Sigma)$ such that, on the one hand, $\Sigma\setminus \{a_1,b_1,c_1\}$ has a connected component homeomorphic to a pair of pants and, on the other, all components $\Sigma \setminus \{a_2,b_2,c_2\}$ are surfaces of infinite type. Any isomorphism between these triangles cannot be promoted to a global automorphism of $C(\Sigma)$ because such an automorphism would then be induced by a homeomorphisms of $\Sigma$ sending a pair of pants to an infinite-type surface and this is absurd. In what follows, we describe a process that, given a closed subgroup $\Gamma<\MCG^*(\Sigma)$, uses the curve graph to produce an ultrahomogeneous structure whose automorphism group is isomorphic to $\Gamma$.

Let $\Gamma<\MCG^*(\Sigma)$ be a closed subgroup. We add to the signature of $C(\Sigma)$, for every $n\in\mathbb{N}$, the relations
	\[	(R_{\overline{b}},n) = \{ \overline{a} \in  (C^0(\Sigma))^n : \overline{a} \ E_{\mathbf{K}} \ \overline{b} \}.
    \]
where $ (C^0(\Sigma))^n$ denotes the set of $n$-tuples of vertices of $C(\Sigma)$, $\overline{b} \in (C^0(\Sigma))^n$ and the equivalence relation
	\[	(a_1, \cdots , a_n) E_{\mathbf{K}} (b_1, \cdots , b_n)	\]
holds if and only if there is an element $h \in\Gamma$ such that $h(a_j) = b_j$ for all $j\in\{1,\ldots,n\}$.
The interpretation $R_n(\overline{b})$ of $(R_{\overline{b}},n)$ is the $\Gamma$-orbit of $\overline{b}\in (C^0(\Sigma))^n$.

\begin{definition}
    \label{DEF:FullCurveComplex}
Let $\Sigma$ be an infinite-type surface, $\Gamma<\MCG^*(\Sigma)$ a closed subgroup and~$C_\Gamma(\Sigma)$ the ultrahomogeneous countable structure obtained from the curve graph by the process described above. We call~$C_\Gamma(\Sigma)$ the \emph{full curve graph} of~$\Sigma$ (w.r.t.\ to $\Gamma$) and $\mathcal F_{C_\Gamma(\Sigma)}$ the Fra\"{\i}ss\'e class of finite structures embeddable in $C_\Gamma(\Sigma)$.
\end{definition}

Note that by definition, $C_\Gamma(\Sigma)$ is the Fra\"{\i}ss\'e limit of $\mathcal F_{C_\Gamma(\Sigma)}$, and hence it is an ultrahomogeneous structure.
The following lemma is fundamental for our proofs because it allows us to use the technology given by Theorem~\ref{Theorem:Criteria} to study conjugacy classes in any closed subgroup $\Gamma<\MCG^*(\Sigma)$.

\begin{lemma}
        \label{Lemma:Aut-Gamma-AreIsomorphic}
Let $\Sigma$ be an infinite-type surface, $\Gamma<\MCG^*(\Sigma)$ a closed subgroup and~$C_\Gamma(\Sigma)$ the full curve graph of $\Sigma$ w.r.t.~$\Gamma$. Then
the morphism $\Gamma\to\Aut(C_\Gamma(\Sigma))$ which associates to a mapping class $f\in \Gamma$ the automorphism $\alpha \mapsto f(\alpha)$ is an isomorphism.
\end{lemma}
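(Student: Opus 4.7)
The plan is to verify the four properties (well-definedness, homomorphism, injectivity, surjectivity) of the assignment $f\mapsto (\alpha\mapsto f(\alpha))$, with the bulk of the work concentrated in the last one.

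First I would check that every $f\in\Gamma$ genuinely induces an automorphism of $C_\Gamma(\Sigma)$, not merely of $C(\Sigma)$. Clearly $f$ respects the original edge relation since it is already a mapping class acting on isotopy classes of curves. For each new relation $R_n(\overline{b})$, which by construction is the $\Gamma$-orbit of $\overline{b}\in (C^0(\Sigma))^n$, pick $\overline{a}\in R_n(\overline{b})$ and write $\overline{a}=h(\overline{b})$ with $h\in\Gamma$; then $f(\overline{a})=(f\circ h)(\overline{b})\in R_n(\overline{b})$ because $f\circ h\in\Gamma$. Applying the same reasoning to $f^{-1}\in\Gamma$ shows that $f$ restricts to a bijection of $R_n(\overline{b})$. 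The map $f\mapsto(\alpha\mapsto f(\alpha))$ is then a homomorphism by functoriality of the action.

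Injectivity is immediate from Theorem~\ref{THM:MCG_is_Aut}: if $f\in\Gamma$ acts trivially on $C^0(\Sigma)$, then it is the identity of $\Aut(C(\Sigma))\cong\MCG^*(\Sigma)$, hence the trivial mapping class. For surjectivity, let $\phi\in\Aut(C_\Gamma(\Sigma))$. A fortiori, $\phi$ preserves adjacency, so $\phi\in\Aut(C(\Sigma))$, and by Theorem~\ref{THM:MCG_is_Aut} it is induced by a unique $f\in\MCG^*(\Sigma)$. The remaining point is to show $f\in\Gamma$; this is where the hypothesis that $\Gamma$ is closed must enter. The key observation is that since $\phi$ preserves the relation $R_n(\overline{b})$ for every $n$ and every $\overline{b}\in (C^0(\Sigma))^n$, in particular $\phi(\overline{b})\in R_n(\overline{b})$, so there exists $h_{\overline{b}}\in\Gamma$ with $h_{\overline{b}}(\overline{b})=f(\overline{b})$. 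Rephrased in terms of the permutation topology described in~\eqref{EQ:Basic-permutation-topo}, this says that every basic open neighbourhood $f\cdot U_A$ of $f$ meets $\Gamma$, so $f$ lies in the closure of $\Gamma$ in $\MCG^*(\Sigma)$; since $\Gamma$ is closed, $f\in\Gamma$.

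The main obstacle is cleanly carrying out the surjectivity step: one has to notice that the new $n$-ary relations were \emph{defined} precisely as $\Gamma$-orbits and therefore encode, through the single preservation condition $\phi(\overline{b})\in R_n(\overline{b})$, exactly the statement that $f$ can be approximated on every finite set of curves by elements of $\Gamma$. Once this encoding is made explicit, invoking closedness of $\Gamma$ in the permutation topology finishes the argument, and since the map in question is a continuous algebraic bijection between Polish groups (both carrying the permutation topology), it is automatically a homeomorphism.
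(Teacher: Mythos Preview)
Your proof is correct and follows essentially the same strategy as the paper's: use Theorem~\ref{THM:MCG_is_Aut} for injectivity and to realize any $\phi\in\Aut(C_\Gamma(\Sigma))$ by some $f\in\MCG^*(\Sigma)$, then exploit that the added $n$-ary relations are $\Gamma$-orbits to see that $f$ agrees with an element of $\Gamma$ on every finite tuple of curves, and conclude $f\in\Gamma$ by closedness. The paper organizes the last step via an explicit exhaustion $(A_i)$ of $C^0(\Sigma)$ and an appeal to the Alexander method to argue that the approximating sequence $(\gamma_i)$ converges to $f$; your phrasing in terms of basic neighbourhoods $f\cdot U_A$ in the permutation topology is a slightly more direct packaging of the same idea.
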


\begin{proof}
Theorem~\ref{THM:MCG_is_Aut} guarantees the injectivity of $\Gamma\to\Aut(C_\Gamma(\Sigma))$ and that any $\Phi\in\Aut(C_\Gamma(\Sigma))$ is induced by a mapping class $h\in\MCG^*(\Sigma)$. In what follows, we show that~$h$ is indeed in $\Gamma$. Let $(A_i)_{i\geq 0}$ be an exhaustion of $C^0(\Sigma)$ by finite sets, that is, $A_i\subset A_{i+1}$ for all $i\geq 0$ and $\cup_{i\geq 0}A_i=C^0(\Sigma)$. Define $\Phi_i:=\Phi \restriction A_i$. It is then sufficient to show that for every $i\geq 0$, we have that $\Phi_i=\gamma_i \restriction A_i$ for some $\gamma_i\in\Gamma$. Indeed, since $\Gamma$ is closed and $(A_i)$ is an exhaustion, the Alexander method (see~\cite{hernandez-etal-2} for details) implies that $(\gamma_i)$ converges to $h$ and therefore  $\Gamma\to\Aut(C_\Gamma(\Sigma))$ is surjective. Let now $A_i=\{a_1,\ldots,a_{m_i}\}$ and $\overline{a_i}=(a_1,\ldots,a_{m_i})\in A_i^{m_i}$. Given that $\Phi\in\Aut(C_\Gamma(\Sigma))$, we have:
$$
\Phi(\Gamma\overline{a_i}\cap A_i^{m_i})=\Gamma\overline{a_i}\cap \Phi(A_i^{m_i}).
$$
In particular, this equation implies that $\Phi_i=\gamma_i \restriction A_i$ for some $\gamma_i\in\Gamma$.
\end{proof}

\begin{remark}
The proof of Lemma~\ref{Lemma:Aut-Gamma-AreIsomorphic} is similar in spirit to the proof of Proposition~\ref{Proposition:Fraissefication_Automorphism_Group}. However, these results are different. In the context of the curve graph, Proposition~\ref{Proposition:Fraissefication_Automorphism_Group} tells us that by enriching $C(\Sigma)$ with $\Aut(C(\Sigma))\simeq\MCG^*(\Sigma)$, we get an ultrahomogeneous structure whose automorphism group is naturally isomorphic to $\MCG^*(\Sigma)$. Lemma~\ref{Lemma:Aut-Gamma-AreIsomorphic} tells us that if we enrich $C(\Sigma)$ only with elements of a closed subgroup $\Gamma<\MCG^*(\Sigma)$, then we get an ultrahomogeneous structure whose automorphism group is naturally isomorphic to $\Gamma$. 
\end{remark}

\section{Meager conjugacy classes}
	\label{section:meager}

Recall that, given a closed subgroup $\Gamma$ of $\MCG(\Sigma)$, we say that a conjugacy class of $\Gamma$ is meagre if it is a countable union of nowhere dense subsets of $\Gamma$. In this section, we prove the following:

\begin{theorem}[All conjugacy classes are meager]
	\label{THM:All_CC_are_meager_full_statement}
Let $\Sigma$ be an infinite-type surface and $\Gamma$ a closed subgroup of $\MCG(\Sigma)$ which contains, for every essential curve $\alpha\subset\Sigma$, a non-trivial power of the Dehn twist $\tau_\alpha$. Then all conjugacy classes of $\Gamma$ are meager in $\Gamma$.
 \end{theorem}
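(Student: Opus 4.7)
The plan is to apply Theorem~\ref{Theorem:Criteria}(3) in combination with Lemma~\ref{Lemma:Aut-Gamma-AreIsomorphic}. The latter identifies $\Gamma$ with $\Aut(C_\Gamma(\Sigma))$, the automorphism group of the Fra\"{\i}ss\'e limit of the class $\mathcal{F}:=\mathcal{F}_{C_\Gamma(\Sigma)}$. The former then states that $\Gamma$ admits a non-meager conjugacy class if and only if the derived class $\mathcal{F}_p$ satisfies \emph{local} \textbf{WAP}. Consequently, proving that every conjugacy class of $\Gamma$ is meager reduces to disproving local \textbf{WAP} for $\mathcal{F}_p$: for every $S\in\mathcal{F}_p$ we must exhibit a strictly larger $S'\in\mathcal{F}_p$ at which \textbf{WAP} already fails, meaning that for every $T\supseteq S'$ in $\mathcal{F}_p$ there exist extensions $T_0,T_1\supseteq T$ admitting no common amalgam in $\mathcal{F}_p$ agreeing on the image of $S'$.

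The construction of $S'$ uses the Dehn twist trick. Geometrically, $S=\langle A,\psi:B\to C\rangle$ is a finite multicurve $A$ (with its relational structure inherited from $C_\Gamma(\Sigma)$) equipped with a partial map $\psi$ realized by some $\gamma_\psi\in\Gamma$. Since $\Sigma$ is of infinite type, the complement $\Sigma\setminus(A\cup C)$ contains a connected infinite-type subsurface; pick in it an essential simple closed curve $\alpha$ and an essential simple closed curve $\beta$ with $i(\alpha,\beta)\neq 0$. Using the abundance of Dehn twists in $\Gamma$, one can arrange, by composing $\gamma_\psi$ with elements of $\Gamma$ supported away from $B\cup C$, that the realizer fixes both $\alpha$ and $\beta$. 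Then set
\[ S':=\langle A\cup\{\alpha,\beta\},\;\psi\cup\{\alpha\mapsto\alpha\}\rangle, \]
so that $\beta$ lies in the universe but not in the domain of the partial map of $S'$.

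Given any $T\supseteq S'$ in $\mathcal{F}_p$ via an embedding $e:S'\to T$, write $b:=e(\beta)\in T$, and adjoin $b$ to the domain in two distinct ways to obtain extensions $T_0,T_1\supseteq T$: in $T_0$ set $\psi_{T_0}(b)=b$ (realized by $\gamma_\psi$), whereas in $T_1$ introduce a new element $b'$ representing the curve $\tau_\alpha^{N_\alpha}(\beta)$ and set $\psi_{T_1}(b)=b'$ (realized by $\tau_\alpha^{N_\alpha}\circ\gamma_\psi$, which lies in $\Gamma$ by hypothesis and still restricts to $\psi$ on $B$, since $\tau_\alpha^{N_\alpha}$ fixes every curve of $C$ pointwise). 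Suppose, for a contradiction, that there is an amalgam $U\in\mathcal{F}_p$ with embeddings $r:T_0\to U$ and $s:T_1\to U$ satisfying the compatibility $r\circ f\circ e=s\circ g\circ e$ on $S'$. Then $r(b)=s(b)=:u\in U$, and applying the $\mathcal{F}_p$-embedding condition twice yields
\[ \psi_U(u)=r(\psi_{T_0}(b))=r(b)=u \quad\text{and}\quad \psi_U(u)=s(\psi_{T_1}(b))=s(b'). \]
Therefore $s(b)=s(b')$, so by injectivity of $s$, $b=b'$ in $T_1$. But $b$ and $b'$ represent the distinct isotopy classes $\beta$ and $\tau_\alpha^{N_\alpha}(\beta)$ (distinct by property~(2) of Dehn twists, as $i(\alpha,\beta)\neq 0$), a contradiction. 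Hence \textbf{WAP} fails at $S'$, and the theorem follows.

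The main obstacle in a fully rigorous version of this argument is verifying that the realizer $\gamma_\psi$ of $\psi$ can indeed be chosen inside $\Gamma$ so as to fix the auxiliary curves $\alpha$ and $\beta$; this requires a flexibility lemma asserting that $\Gamma$ acts with enough transitivity on essential curves in the infinite-type complement of $B\cup C$, which in turn leverages the hypothesis that $\Gamma$ contains a non-trivial power of every Dehn twist.
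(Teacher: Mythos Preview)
Your overall strategy---reducing to the failure of local \textbf{WAP} via Theorem~\ref{Theorem:Criteria}(3) and Lemma~\ref{Lemma:Aut-Gamma-AreIsomorphic}, and then using Dehn twists to manufacture incompatible extensions---is the same as the paper's. However, your execution has a genuine gap in the handling of the universal quantifier over $T$.

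To disprove \textbf{WAP} at $S'$, for \emph{every} extension $T$ of $S'$ you must produce $T_0,T_1$ extending $T$ (not merely extending $S'$). In your construction you set $\psi_{T_0}(b)=b$ and claim this is ``realized by $\gamma_\psi$''. But $\gamma_\psi$ is the fixed realizer of $\psi_{S'}$; there is no reason it should realize the partial map $\psi_T$, which may be arbitrarily larger than $e(\psi_{S'})$. For $T_0$ to extend $T$ in $\mathcal{F}_p$, the map $\psi_{T_0}$ must extend $\psi_T$, so you need an element of $\Gamma$ that agrees with some realizer of $\psi_T$ on its entire domain \emph{and} fixes $b$. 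Nothing guarantees this: $b=e(\beta)$ may already lie in $\operatorname{dom}(\psi_T)$ with $\psi_T(b)\neq b$, or there may simply be no $\Gamma$-realizer of $\psi_T$ fixing $b$. The same objection applies to $T_1$. (Your final amalgam computation $r(b)=s(b)=\psi_U(u)=s(b')$ is correct, but it rests on these invalid $T_0,T_1$.) The ``main obstacle'' you flag at the end---arranging $\gamma_\psi$ to fix $\alpha,\beta$---is not the real issue; the problem is one level up, at the arbitrary $T$.

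The paper circumvents this by letting the obstruction depend on $T$ rather than freezing it inside $S'$. Given any $(h_1,S_1)$ extending $(h_0,S_0)$, one chooses a multicurve $M$ (a pants decomposition together with boundary curves) inside a finite-type subsurface $V$ of the infinite-type complement of $S_1$, and sets $h_2=h_1\circ\tau_M^{2}$, $h_3=h_1\circ\tau_M^{3}$. Because $M$ is disjoint from $S_1$, these agree with $h_1$ on $S_1$, so $(h_2,S_2)$ and $(h_3,S_3)$ genuinely extend $(h_1,S_1)$ regardless of what $h_1$ does. The trade-off is that the non-amalgamation argument becomes geometric: any two copies of $V$ in a putative amalgam share the multicurve coming from $\partial S_0$, forcing their pants curves either to coincide or to intersect, and either case contradicts the distinct twist exponents.
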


\subsection{Translating local WAP into the language of surfaces}
	\label{SSEC:Translation}

Let $\Sigma$ be an infinite-type surface and $\Gamma$ a closed subgroup of $\MCG^*(\Sigma)$. In this section, $\mathcal{F}_{p}$ is defined with respect to the Fra\"iss\'e class $\mathcal{F}_{C_\Gamma(\Sigma)}$, where $C_\Gamma(\Sigma)$ is the full curve graph of $\Sigma$ w.r.t.~$\Gamma$, see Definitions~\ref{DEF:Class-F_p} and~\ref{DEF:FullCurveComplex}. For the proof of Theorem~\ref{THM:All_CC_are_meager_full_statement}, we need to translate some concepts from the language of Fraïssé classes to the language of surfaces and homeomorphisms. This is done in the following paragraphs and summarized in Lemma~\ref{lemma:GeomLocalWAP}.

Let $\langle A, \psi: B \to C\rangle \in \mathcal{F}_{p}$.
As discussed in Remark~\ref{rmk:A=BUC}, we may assume that $A = B \cup C$.
Given that $C_\Gamma(\Sigma)$ is ultrahomogeneous, $\psi$ is induced by at least one element of $\Aut(C_\Gamma(\Sigma))$, which by Lemma~\ref{Lemma:Aut-Gamma-AreIsomorphic} is isomorphic to $\Gamma$. Hence, a way to translate $\langle A, \psi\rangle \in \mathcal{F}_{p}$ to the language of surfaces is to consider a pair $(h,S)$ with the property that $h: \Sigma \to \Sigma$ is an element of~$\Gamma$ that induces $\psi$, and $S$ is a finite-type subsurface of $\Sigma$ that contains the curves in $A$. In particular, this implies that $h(S)$ contains the curves in~$C$. Note that $S$ could be a surface with non-empty boundary, given that $\Sigma$ is of infinite type. Note also that given $\langle A, \psi\rangle\in \mathcal{F}_{p}$, there exist infinitely many pairs $(h,S)$ satisfying the previous property.

Let $\langle A, \psi: B \to C\rangle, \langle  A', \psi': B' \to C' \rangle \in \mathcal{F}_{p}$ be such that $\langle A, \psi\rangle$ embeds into $\langle A', \psi'\rangle$. A way to translate this to the language of surfaces is having two pairs $(h,S)$ and $(h',S')$ as in the preceding paragraph, corresponding to $\langle A, \psi\rangle$ and $\langle A', \psi'\rangle$, and $f\in\Gamma$, such that $f(S) \subset S'$ and $f \circ h \restriction S = h' \circ f \restriction S$ up to isotopy. In this case, we say that the pair $(h',S')$ \emph{extends the pair $(h,S)$} and we call $f(S)$ \emph{the copy of $S$ inside $S'$}. In case we need to specify~$f$, we say \emph{the pair $(h,S)$ is extended to $(h',S')$ by $f$}. Note that in this case, $f \circ h \circ f^{-1}$ has to coincide with $h'$ on the copy of $S$ inside $S'$, modulo isotopy.

Let $\langle A, \eta\rangle, \langle B, \psi \rangle, \langle C, \psi' \rangle, \langle D, \phi \rangle \in \mathcal{F}_{p}$ be such that $\langle A, \eta \rangle$ embeds into both $\langle B, \psi \rangle$ and $\langle C, \psi'\rangle$ via $f$ and $g$ respectively, and $\langle D, \phi \rangle$ is an amalgamation of these two embeddings, i.e.\ both $\langle B, \psi \rangle$ and $\langle C, \psi' \rangle$ embed into $\langle D, \phi \rangle$ via $f'$ and $g'$ respectively, and $f' \circ f = g' \circ g$. This translates as having pairs\,\footnote{As above, all surfaces involved are of finite type and the homeomorphisms in question belong to $\Gamma$.} $(h_{1}, S_{1}), (h_{2}, S_{2}), (h_{3}, S_{3}), (h_{4}, S_{4})$ corresponding to  $\langle A, \eta\rangle, \langle B, \psi \rangle, \langle C, \psi' \rangle, \langle D, \phi \rangle$ respectively and such that:
\begin{itemize}
 \item the pair $(h_{1},S_{1})$ is extended to both $(h_{2}, S_{2})$ and $(h_{3}, S_{3})$ by elements $f_{1}$ and $g_{1}$ of $\Gamma$ respectively, and
 \item the pairs $(h_{2},S_{2})$ and $(h_{3},S_{3})$ are extended to $(h_{4}, S_{4})$ by elements $f_{2}$ and $g_{2}$ of $\Gamma$, such that
  \begin{equation}
	  \label{EQ:ExtensionAP1}
  f_2 \circ f_{1} \restriction S_{1} = g_{2} \circ g_1 \restriction S_{1},\hspace{2mm} g_{2} \circ g_1(S_1) \subset S_{4}
\end{equation}
  and
  \begin{equation}
	  	\label{EQ:ExtensionAP2}
  f_2 \circ f_{1} \circ h_{1} \restriction S_{1} = h_{4} \circ f_2 \circ f_{1} \restriction S_{1} = g_{2} \circ g_1 \circ h_{1} \restriction S_{1} = h_{4} \circ g_{2} \circ g_1 \restriction S_{1}
\end{equation}
  modulo isotopy.
\end{itemize}
 In this case, we say that \emph{the pair $(h_{4}, S_{4})$ amalgamates the pairs $(h_{2},S_{2})$ and $(h_{3},S_{3})$ from~$(h_{1},S_{1})$}. Note that the extensions from $(h_{1},S_{1})$ to $(h_{2},S_{2})$ and $(h_{3},S_{3})$ might be a composition of finitely many extensions. As we see below, this is important to phrase properly the \textbf{local WAP} property in this language.

We integrate the discussion above, Definition~\ref{DEF:WAP} and Theorem~\ref{Theorem:Criteria} in the following:

\begin{lemma}\label{lemma:GeomLocalWAP}
Let $\Sigma$ be an infinite-type surface and $\Gamma$ be a closed subgroup of $\MCG(\Sigma)$. Then~$\Gamma$ has a non-meager conjugacy class if and only if the following geometric version of \textbf{local WAP} holds:

There exists a pair $(h_{-1},S_{-1})$ such that for all pairs $(h_{0},S_{0})$ that extend \linebreak $(h_{-1},S_{-1})$, there exists a pair $(h_{1},S_{1})$ which extends $(h_0, S_0)$ and satisfies that for all pairs $(h_{2},S_{2})$ and $(h_{3},S_{3})$ that extend $(h_{1},S_{1})$, there exists a pair $(h_{4},S_{4})$ that amalgamates $(h_{2},S_{2})$ and $(h_{3},S_{3})$ from $(h_{-1},S_{-1})$.

This can be summarized in the following diagram (read from left to right):
\begin{center}
\begin{tikzpicture}
 \matrix (m) [matrix of math nodes,row sep=1em,column sep=2em,minimum width=2em]{
  & & & (h_{2},S_{2}) & \\
  (h_{-1},S_{-1}) & (h_{0},S_{0}) & (h_{1},S_{1}) & & (h_{4},S_{4}) \\
  & & & (h_{3},S_{3}) & \\};
\path[right hook->]
(m-2-1) edge node[above]{$\forall$} (m-2-2)
(m-2-2) edge node[above]{$\exists$} (m-2-3)
(m-2-3) edge node[above]{$\forall$} (m-1-4)
(m-2-3) edge node[below]{$\forall$} (m-3-4)
(m-1-4) edge node[above]{$\exists$} (m-2-5)
(m-3-4) edge node[below]{$\exists$} (m-2-5);
\end{tikzpicture}
\end{center}
\end{lemma}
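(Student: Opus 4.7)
The plan is to derive this lemma from Theorem~\ref{Theorem:Criteria}(3) by setting up a careful dictionary between $\mathcal{F}_p$ and pairs $(h,S)$, where $\mathcal{F}_p$ is the class of partial isomorphisms associated to the Fra\"{\i}ss\'e class $\mathcal{F}_{C_\Gamma(\Sigma)}$. By Lemma~\ref{Lemma:Aut-Gamma-AreIsomorphic}, $\Gamma \cong \Aut(C_\Gamma(\Sigma))$, and $C_\Gamma(\Sigma)$ is the Fra\"{\i}ss\'e limit of $\mathcal{F}_{C_\Gamma(\Sigma)}$ by construction. Hence Theorem~\ref{Theorem:Criteria}(3) reduces the lemma to proving that $\mathcal{F}_p$ satisfies \textbf{local WAP} if and only if the geometric statement holds. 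The rest of the proof is a direct unpacking of this equivalence, using the translation dictionary already set up in Section~\ref{SSEC:Translation}.

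For the forward direction, suppose $\mathcal{F}_p$ satisfies \textbf{local WAP} witnessed by $\langle A_{-1}, \eta_{-1}\rangle \in \mathcal{F}_p$. Choose a pair $(h_{-1}, S_{-1})$ realizing $\langle A_{-1}, \eta_{-1}\rangle$: concretely, use ultrahomogeneity of $C_\Gamma(\Sigma)$ together with Lemma~\ref{Lemma:Aut-Gamma-AreIsomorphic} to pick $h_{-1}\in\Gamma$ inducing $\eta_{-1}$, and take $S_{-1}$ to be any finite-type subsurface containing representatives of the curves of $A_{-1}$ and of $h_{-1}(A_{-1})$. Given any extension $(h_0, S_0)$ of $(h_{-1}, S_{-1})$, its associated partial isomorphism $\langle A_0, \eta_0\rangle \in \mathcal{F}_p$ is an element into which $\langle A_{-1}, \eta_{-1}\rangle$ embeds, so \textbf{WAP} provides an embedding $e$ into some $\langle A_1, \eta_1\rangle$ with the amalgamation property; realize $e$ geometrically as an extension $(h_1, S_1)$ of $(h_0, S_0)$. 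For any further pair of extensions $(h_2,S_2)$ and $(h_3, S_3)$, the corresponding embeddings of $\langle A_1, \eta_1\rangle$ admit the \textbf{WAP} amalgamation $\langle A_4, \eta_4\rangle$, which we realize as a pair $(h_4, S_4)$ amalgamating from the base. The reverse direction is symmetric: any $\langle A_{-1}, \eta_{-1}\rangle \in \mathcal{F}_p$ is realized by some $(h_{-1}, S_{-1})$, and the geometric amalgamation data then produces the required \textbf{WAP} amalgamation in $\mathcal{F}_p$ by reading off the induced partial isomorphisms on the relevant curve systems.

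The step I expect to be delicate is verifying that the translation is well-defined and faithful despite two sources of non-uniqueness: the choice of $S$ (given the curves of $A$) and the choice of $h\in\Gamma$ (given $\psi$, by ultrahomogeneity). Two natural checks must be spelled out: first, that \emph{extensibility} of $(h, S)$ to $(h', S')$ is independent of the specific representatives of $S, S'$ chosen, once one fixes the underlying isotopy classes; second, that the composition of extensions corresponds exactly to the composition of embeddings (this is the content of the remark that ``the extensions from $(h_1,S_1)$ to $(h_i,S_i)$ might be a composition of finitely many extensions''). Closely related is the need to verify that the equalities (\ref{EQ:ExtensionAP1}) and~(\ref{EQ:ExtensionAP2}) modulo isotopy correspond precisely to the commutativity condition $r\circ f\circ e = s\circ g\circ e$ in the definition of \textbf{WAP}, with the restriction to $S_{-1}$ encoding the fact that in \textbf{local WAP} the base of the amalgamation is the witnessing structure. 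Once this dictionary is laid out line by line, the equivalence is a tautology.
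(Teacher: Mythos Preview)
Your proposal is correct and matches the paper's own approach exactly: the paper states that ``the proof of this lemma follows directly from the definitions and is left to the reader,'' and what you have written is precisely the intended unpacking --- invoke Theorem~\ref{Theorem:Criteria}(3) via Lemma~\ref{Lemma:Aut-Gamma-AreIsomorphic}, then translate \textbf{local WAP} for $\mathcal{F}_p$ into the geometric language using the dictionary of Section~\ref{SSEC:Translation}. The ``delicate'' points you flag (independence from the choices of $S$ and $h$, and that equations~(\ref{EQ:ExtensionAP1})--(\ref{EQ:ExtensionAP2}) encode the \textbf{WAP} commutativity with base $S_{-1}$) are real but routine, which is why the paper treats the whole lemma as an exercise.
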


The proof of this lemma follows directly from the definitions and is left to the reader.

\subsection{Proof of Theorem~\ref{THM:All_CC_are_meager_full_statement}}

We show that \textbf{local WAP} always fails for $\mathcal{F}_{p}$, where this class is defined with respect to the Fra\"iss\'e class $\mathcal{F}_{C_\Gamma(\Sigma)}$, see Definitions~\ref{DEF:Class-F_p} and~\ref{DEF:FullCurveComplex}.

In the language of pairs formed by elements of $\Gamma$ and  finite-type subsurfaces of $\Sigma$ (see Section~\ref{SSEC:Translation} above), the failure of \textbf{local WAP} (see Definition~\ref{DEF:WAP}) can be phrased as follows. For every pair $(h_{-1},S_{-1})$, there exists a pair $(h_{0},S_{0})$ that extends $(h_{-1},S_{-1})$, so that for any pair $(h_{1},S_{1})$ that extends $(h_{0},S_{0})$ there exist two pairs $(h_{2},S_{2})$ and $(h_{3},S_{3})$ extending $(h_{1},S_{1})$ such that there does not exist a pair $(h_{4},S_{4})$ that amalgamates $(h_{2},S_{2})$ and $(h_{3},S_{3})$ from $(h_{-1},S_{-1})$.

Let $(h_{-1}, S_{-1})$ now be a given pair. Define $h_0=h_{-1}$ and $S_0$ as the closure of $S_{-1}$ inside~$\Sigma$. Suppose that $(h_1,S_1)$ is an extension of $(h_0,S_0)$ by~$g\in\Gamma$. In the rest of the proof, we:
\begin{enumerate}
	\item enlarge the surface $S_1$ in a convenient manner to obtain a surface $S_2=S_3$ containing $S_1$ and
	\item define homeomorphisms $h_2,h_3\in\Gamma$ such that $(h_2,S_2)$ and $(h_3,S_3)$ extend~$(h_1,S_1)$ but cannot be amalgamated into a pair $(h_4,S_4)$ from $(h_{-1},S_{-1})$.
\end{enumerate}

Pick $x\in E(\Sigma)$  an end accumulated by genus or a non-isolated planar end. Such an end exists because if all ends of $\Sigma$ are planar and isolated then $\Sigma$ is a finite-type surface. This implies, loosely speaking, that any neighbourhood of $x$ in $\Sigma$ is a subsurface of infinite type. Let $U$ be the connected component of $\Sigma\setminus S_0$ such that the open set $U^*$ (see Section~\ref{subsection:ends}) is a neighbourhood  of $x$ and $M_0\subset S_0$ be the multicurve in $\Sigma$ forming $\partial U$. Recall from above that $g\in\Gamma$ denotes a
homeomorphism defining the extension of $(h_0,S_0)$ to $(h_1,S_1)$. Define $U_1=g(U)$, $x_1=g^*x$ and $M_1=g(M_0)$. In particular, $(U_1)^*$ is a neighbourhood of $x_1$. As~$U_1$ is of infinite type, we can assure the existence of a \emph{closed} finite-type subsurface $V\subset U_1$ satifying the following properties:
\begin{enumerate}
    \item $M_{1}$ consists of boundary curves of $V$ and all boundary curves of $V$ that are not in~$M_{1}$ are essential curves of $\Sigma$. Moreover, every puncture of $V$ is a puncture of $\Sigma$.
    \item $V$ contains all the connected components of $S_{1} \cap U_{1}$ so that if $\delta$ is a boundary curve of $S_{1} \cap U_{1}$, then either:\footnote{Note that if a connected component of $U_1 \setminus S_1$ was a disc then we could add it to $S_1$ without affecting the rest of the arguments in this proof. Similarly, if a connected component of $S_1 \cap U_1$ was a disk, we could remove it from $S_1$.}
    \begin{itemize}
     \item $\delta$ is an element of $M_{1}$, or
     \item $\delta$ is an essential curve of $V$, or
     \item $\delta$ bounds a once-punctured disc in $V$.
    \end{itemize}
    \item $V\setminus S_1$ admits a non-trivial pants decomposition. That is, it has sufficiently large topological complexity.\footnote{Recall that the \emph{complexity} of a surface of genus $g$ with $n$ planar ends and $b$ boundary components is equal to $3g-3+n+b$.}
\end{enumerate}

Let $B$ be the multicurve formed by all curves in $\partial V$ which do not belong to $M_1$. We can think of $B$ as the ``exterior'' boundary of $V$ (relative to the end $x_1$). Now consider a pants decomposition $P$ of the interior of $V\setminus S_1$ and define the multicurve $M:=P\sqcup B$.
See Figure~\ref{Fig:locWAP} for an illustration of the subsurface $V$ and the multicurve $B$.
\begin{figure}[!ht]
\centering
\begin{tikzpicture}
\newcommand{\hole}[3]{
 \begin{scope}[xshift=#1,yshift=#2,scale=#3]
  \fill[color=white] (-1.2,0.05) to[bend left] (1.2,0.05) -- (1.5,0.15) to[bend left] (-1.5,0.15) -- (-1.2,0.05);
  \draw (-1.2,0.05) to[bend left] (1.2,0.05);
  \draw (-1.5,0.15) to[bend right] (1.5,0.15);
 \end{scope}
 }

\newcommand{\puncture}[2]{
 \begin{scope}[xshift=#1, yshift=#2]
  \draw (-0.07,0.07) -- (0.07,-0.07);
  \draw (-0.07,-0.07) -- (0.07,0.07);
 \end{scope}
}

 \path[pattern color=blue!10, pattern=north east lines] (0,0) .. controls +(60:1cm) and +(-30:0.7cm) .. (-0.5,3)
 .. controls +(-30:0.2cm) and +(-80:0.2cm) .. (0.8,3.5)
 .. controls +(-80:0.5cm) and +(-160:1cm) .. (3.3,2.8)
 .. controls +(20:0.1cm) and +(30:0.1cm) .. (3.8,1.7)
 .. controls +(-150:0.7cm) and +(175:0.6cm) .. (4,1)
 .. controls +(-5:0.07cm) and +(-10:0.07cm) .. (3.9,0.4)
 .. controls +(170:1cm) and +(80:1cm) .. (1,-0.4)
 .. controls +(60:0.15cm) and +(80:0.15cm) .. (0,0);
 \draw[color=blue] (2.5,1) node{$S_0$};

 \draw[densely dotted] (0,0) -- +(-120:0.3cm);
 \draw (0,0) .. controls +(60:1cm) and +(-30:0.7cm) .. (-0.5,3);
 \draw[densely dotted] (-0.5,3) -- +(150:0.3cm);

 \draw (-0.5,3) .. controls +(-30:0.2cm) and +(-80:0.2cm) .. (0.8,3.5);
 \draw (-0.5,3) .. controls +(150:0.1cm) and +(100:0.1cm) .. (0.8,3.5);

 \draw[densely dotted] (0.8,3.5) -- +(100:0.3cm);
 \draw (0.8,3.5) .. controls +(-80:0.5cm) and +(-160:1cm) .. (3.3,2.8);

 \draw[color=blue, dashed] (3.3,2.8) .. controls +(20:0.1cm) and +(30:0.1cm) .. (3.8,1.7);
 \draw[color=blue] (3.3,2.8) .. controls +(-160:0.1cm) and +(-150:0.1cm) .. (3.8,1.7);

 \draw (3.8,1.7) .. controls +(-150:0.7cm) and +(175:0.6cm) .. (4,1);

 \draw[color=blue, dashed] (4,1) .. controls +(-5:0.07cm) and +(-10:0.07cm) .. (3.9,0.4);
 \draw[color=blue] (4,1) .. controls +(175:0.1cm) and +(170:0.1cm) .. (3.9,0.4) node[below]{$M_0$};

 \draw (3.9,0.4) .. controls +(170:1cm) and +(80:1cm) .. (1,-0.4);
 \draw[densely dotted] (1,-0.4) -- +(-100:0.3cm);

 \draw (1,-0.4) .. controls +(60:0.15cm) and +(80:0.15cm) .. (0,0);
 \draw (1,-0.4) .. controls +(-120:0.1cm) and +(-100:0.1cm) .. (0,0);

 \hole{1.2cm}{2cm}{0.25}
 \puncture{1.6cm}{0.7cm}
 \puncture{2.7cm}{2.2cm}

 \draw (3.3,2.8) .. controls +(20:2cm) and +(170:1cm) .. (6.4,3.1);

 \draw (3.8,1.7) .. controls +(30:1cm) and +(90:0.4cm) .. (5.4,1.7)
 .. controls +(-90:0.5cm) and +(-5:0.3cm) .. (4,1);

 \draw (6.4,3.1) .. controls +(170:0.2cm) and +(-155:0.2cm) .. (6.5,1.3);
 \draw[dashed] (6.4,3.1) .. controls +(-10:0.2cm) and +(25:0.2cm) .. (6.5,1.3);

 \draw (6.5,1.3) .. controls +(-155:0.8cm) and +(150:0.7cm) .. (5.7,0.3);

 \draw (5.7,0.3) .. controls +(150:0.15cm) and +(140:0.15cm) .. (5.2,-0.2);
 \draw (5.7,0.3) .. controls +(-30:0.1cm) and +(-40:0.1cm) .. (5.2,-0.2);

 \draw (5.2,-0.2) .. controls +(140:0.4cm) and +(-10:0.6cm) .. (3.9,0.4);

 \hole{4.9cm}{0.5cm}{0.18}
 \puncture{5.2cm}{2.7cm}

 \draw (6.4,3.1) .. controls +(-10:0.7cm) and +(-165:0.8cm) .. (8.7,3.2);

 \draw (8.7,3.2) .. controls +(-165:0.2cm) and +(165:0.2cm) .. (8.8,1.6);
 \draw[dashed] (8.7,3.2) .. controls +(15:0.15cm) and +(-15:0.15cm) .. (8.8,1.6);

 \draw (6.5,1.3) .. controls +(25:0.7cm) and +(165:0.7cm) .. (8.8,1.6);

 \hole{7.5cm}{2.3cm}{0.23}

 \draw (5.7,0.3) .. controls +(-30:0.2cm) and +(170:0.3cm) .. (6.4,0.1);
 \draw[densely dotted] (6.4,0.1) -- +(-10:0.3cm);

 \draw (6.4,0.1) .. controls +(170:0.1cm) and +(170:0.1cm) .. (6.2,-0.5);
 \draw (6.4,0.1) .. controls +(-10:0.1cm) and +(-10:0.1cm) .. (6.2,-0.5);

 \draw[densely dotted] (6.2,-0.5) -- +(-10:0.3cm);
 \draw (6.2,-0.5) .. controls +(170:0.3cm) and +(-40:0.4cm) .. (5.2,-0.2);

 \puncture{5.8cm}{-0.1cm}

 \draw (8.7,3.2) .. controls +(15:0.7cm) and +(-150:0.4cm) .. (9.8,3.6);
 \draw[densely dotted] (9.8,3.6) -- +(30:0.3cm);

 \draw[densely dotted] (10.1,3) -- +(20:0.3cm);
 \draw (10.1,3) .. controls +(-160:1cm) and +(155:1cm) .. (10.3,1.8);
 \draw[densely dotted] (10.3,1.8) -- +(-25:0.3cm);

 \draw[densely dotted] (10,1) -- +(-40:0.3cm);
 \draw (10,1) .. controls +(140:0.4cm) and +(-15:0.6cm) .. (8.8,1.6);

 \draw (10,3.3) node{$U$};

 \fill (11,1.2) node[right]{$x$} circle (1pt);

 \draw[->] (5,-1) -- node[right]{$g$} (5,-2);

 \begin{scope}[yshift=-6cm]
 \path[pattern color=purple!10, pattern=north east lines] (0,0) .. controls +(60:1cm) and +(-30:0.7cm) .. (-0.5,3)
 .. controls +(-30:0.2cm) and +(-80:0.2cm) .. (0.8,3.5)
 .. controls +(-80:0.5cm) and +(-160:1cm) .. (3.3,2.8)
 .. controls +(20:2cm) and +(170:1cm) .. (6.4,3.1)
 .. controls +(170:0.2cm) and +(-155:0.2cm) .. (6.5,1.3)
 .. controls +(-155:0.8cm) and +(150:0.7cm) .. (5.7,0.3)
 .. controls +(150:0.15cm) and +(140:0.15cm) .. (5.2,-0.2)
 .. controls +(140:0.4cm) and +(-10:0.6cm) .. (3.9,0.4)
 .. controls +(170:1cm) and +(80:1cm) .. (1,-0.4)
 -- +(-100:0.3cm) -- (-120:0.3cm) -- (0,0);

 \path[pattern color=teal!15, pattern=north west lines] (3.9,0.4)
 .. controls +(170:0.1cm) and +(175:0.1cm)  .. (4,1)
 .. controls +(175:0.6cm) and +(-150:0.7cm) .. (3.8,1.7)
 .. controls +(-150:0.1cm) and +(-160:0.1cm) ..
 (3.3,2.8)
 .. controls +(20:2cm) and +(170:1cm) .. (6.4,3.1)
 .. controls +(-10:0.7cm) and +(-165:0.8cm) .. (8.7,3.2)
 .. controls +(-165:0.2cm) and +(165:0.2cm) .. (8.8,1.6)
 .. controls +(165:0.7cm) and +(25:0.7cm) .. (6.5,1.3)
 .. controls +(-155:0.8cm) and +(150:0.7cm) .. (5.7,0.3)
 .. controls +(-30:0.2cm) and +(170:0.3cm) .. (6.4,0.1)
 .. controls +(170:0.1cm) and +(170:0.1cm) .. (6.2,-0.5)
 .. controls +(170:0.3cm) and +(-40:0.4cm) .. (5.2,-0.2)
 .. controls +(140:0.4cm) and +(-10:0.6cm) .. (3.9,0.4);
 \draw[color=teal] (8.2,1.9) node{$V$};
 \draw[color=purple] (3.3,0.8) node{$S_1$};

 \fill[color=white] (3.8,1.7)
 .. controls +(30:1cm) and +(90:0.4cm) .. (5.4,1.7)
 .. controls +(-90:0.5cm) and +(-5:0.3cm) .. (4,1)
 .. controls +(175:0.6cm) and +(-150:0.7cm) .. (3.8,1.7);

 \draw[densely dotted] (0,0) -- +(-120:0.3cm);
 \draw (0,0) .. controls +(60:1cm) and +(-30:0.7cm) .. (-0.5,3);
 \draw[densely dotted] (-0.5,3) -- +(150:0.3cm);

 \draw (-0.5,3) .. controls +(-30:0.2cm) and +(-80:0.2cm) .. (0.8,3.5);
 \draw (-0.5,3) .. controls +(150:0.1cm) and +(100:0.1cm) .. (0.8,3.5);

 \draw[densely dotted] (0.8,3.5) -- +(100:0.3cm);
 \draw (0.8,3.5) .. controls +(-80:0.5cm) and +(-160:1cm) .. (3.3,2.8);

 \draw[color=blue, dashed] (3.3,2.8) .. controls +(20:0.1cm) and +(30:0.1cm) .. (3.8,1.7);
 \draw[color=blue] (3.3,2.8) .. controls +(-160:0.1cm) and +(-150:0.1cm) .. (3.8,1.7);

 \draw (3.8,1.7) .. controls +(-150:0.7cm) and +(175:0.6cm) .. (4,1);

 \draw[color=blue, dashed] (4,1) .. controls +(-5:0.07cm) and +(-10:0.07cm) .. (3.9,0.4);
 \draw[color=blue] (4,1) .. controls +(175:0.1cm) and +(170:0.1cm) .. (3.9,0.4) node[below]{$M_1$};

 \draw (3.9,0.4) .. controls +(170:1cm) and +(80:1cm) .. (1,-0.4);
 \draw[densely dotted] (1,-0.4) -- +(-100:0.3cm);

 \draw (1,-0.4) .. controls +(60:0.15cm) and +(80:0.15cm) .. (0,0);
 \draw (1,-0.4) .. controls +(-120:0.1cm) and +(-100:0.1cm) .. (0,0);

 \hole{2.3cm}{1.1cm}{0.28}
 \puncture{0.7cm}{1.9cm}
 \puncture{1.3cm}{2.4cm}

 \draw (3.3,2.8) .. controls +(20:2cm) and +(170:1cm) .. (6.4,3.1);

 \draw (3.8,1.7) .. controls +(30:1cm) and +(90:0.4cm) .. (5.4,1.7)
 .. controls +(-90:0.5cm) and +(-5:0.3cm) .. (4,1);

 \draw (6.4,3.1) .. controls +(170:0.2cm) and +(-155:0.2cm) .. (6.5,1.3);
 \draw[dashed] (6.4,3.1) .. controls +(-10:0.2cm) and +(25:0.2cm) .. (6.5,1.3);

 \draw (6.5,1.3) .. controls +(-155:0.8cm) and +(150:0.7cm) .. (5.7,0.3);

 \draw (5.7,0.3) .. controls +(150:0.15cm) and +(140:0.15cm) .. (5.2,-0.2);
 \draw (5.7,0.3) .. controls +(-30:0.1cm) and +(-40:0.1cm) .. (5.2,-0.2);

 \draw (5.2,-0.2) .. controls +(140:0.4cm) and +(-10:0.6cm) .. (3.9,0.4);

 \hole{5.1cm}{2.6cm}{0.2}
 \puncture{5.8cm}{1.4cm}

 \draw (6.4,3.1) .. controls +(-10:0.7cm) and +(-165:0.8cm) .. (8.7,3.2);

 \draw[color=teal] (8.7,3.2) .. controls +(-165:0.2cm) and +(165:0.2cm) .. (8.8,1.6);
 \draw[color=teal,dashed] (8.7,3.2) .. controls +(15:0.15cm) and +(-15:0.15cm) .. (8.8,1.6) node[below]{$B$};

 \draw (6.5,1.3) .. controls +(25:0.7cm) and +(165:0.7cm) .. (8.8,1.6);

 \hole{7.4cm}{2.3cm}{0.22}

 \draw (5.7,0.3) .. controls +(-30:0.2cm) and +(170:0.3cm) .. (6.4,0.1);
 \draw[densely dotted] (6.4,0.1) -- +(-10:0.3cm);

 \draw[color=teal] (6.4,0.1) .. controls +(170:0.1cm) and +(170:0.1cm) .. (6.2,-0.5);
 \draw[color=teal] (6.4,0.1) .. controls +(-10:0.1cm) and +(-10:0.1cm) .. (6.2,-0.5);

 \draw[densely dotted] (6.2,-0.5) -- +(-10:0.3cm);
 \draw (6.2,-0.5) .. controls +(170:0.3cm) and +(-40:0.4cm) .. (5.2,-0.2);

 \puncture{5.9cm}{-0.1cm}

 \draw (8.7,3.2) .. controls +(15:0.7cm) and +(-150:0.4cm) .. (9.8,3.6);
 \draw[densely dotted] (9.8,3.6) -- +(30:0.3cm);

 \draw[densely dotted] (10.1,3) -- +(20:0.3cm);
 \draw (10.1,3) .. controls +(-160:1cm) and +(155:1cm) .. (10.3,1.8);
 \draw[densely dotted] (10.3,1.8) -- +(-25:0.3cm);

 \draw[densely dotted] (10,1) -- +(-40:0.3cm);
 \draw (10,1) .. controls +(140:0.4cm) and +(-15:0.6cm) .. (8.8,1.6);

 \draw (10,3.3) node{$U_1$};

 \fill (11,1.2) node[right]{$x_1$} circle (1pt);
 \end{scope}

\end{tikzpicture}

    \caption{The subsurface $V$ and the multicurve $B$.}
	\label{Fig:locWAP}
\end{figure}

Define the finite-type subsurfaces $S_{2} = S_{3} = S_{1} \cup V$, and the homeomorphisms:
$$
h_{2} = \tau_{h_{1}(M)}^{2} \circ h_{1} = h_{1} \circ \tau_{M}^{2} \circ h_{1}^{-1} \circ h_{1} = h_{1} \circ \tau_{M}^{2}
$$
and
$$
h_{3} = \tau_{h_{1}(M)}^{3} \circ h_{1} = h_{1} \circ \tau_{M}^{3} \circ h_{1}^{-1} \circ h_{1} = h_{1} \circ \tau_{M}^{3}.
$$
Note that the identity on $\Sigma$ provides an extension of $(h_{1},S_{1})$ to the pairs $(h_{2},S_{2})$ and $(h_{3},S_{3})$. We claim that there does not exist a pair $(h_{4}, S_{4})$ that amalgamates the pairs $(h_{2},S_{2})$ and $(h_{3},S_{3})$ from $(h_{-1}, S_{-1})$.

If it existed then there would be copies $W_{2}$ and $W_{3}$ of $S_{2}$ and $S_{3}$, respectively, contained in~$S_{4}$. This is because $(h_{4},S_{4})$ has to extend both $(h_{2},S_{2})$ and $(h_{3},S_{3})$. Moreover, since~$(h_{4},S_{4})$ amalgamates these two pairs from $(h_{-1},S_{-1})$, we have that $W_{2}$ and $W_{3}$ (along with the extensions of the homeomorphisms $h_{2}$ and $h_{3}$) have to coincide in the copy of $S_{-1}$ inside them. Let $W$ denote this copy of $S_{-1}$ that is inside both $W_{2}$ and $W_{3}$ and let:
\begin{itemize}
	\item $V_{2}$ and $V_{3}$ be the copies of $V$ inside $W_{2}$ and $W_{3}$ respectively, (note that $V_2 \subset W_2 \setminus W$ and $V_3 \subset W_3 \setminus W$)
	\item $M_2=P_2\cup B_2\subset V_2$ and $M_3=P_3\cup B_3\subset V_3$ be the copies in $S_4$ of the multicurve $M=P\cup B$ defined above, and
    \item given that $S_{0}$ is the closure of $S_{-1}$, we have that $W_{2}$ and $W_{3}$ have to coincide in the copy of the multicurve $M_{1}$. Let $M_{1}^{\prime}$ be this copy inside $S_4$.

\end{itemize}
Given that $V_2$ and $V_3$ are homeomorphic and coincide in $M_1'$, there exist $\delta_2\in M_2$ and~$\delta_3\in M_3$ such that either (i) $\delta_2=\delta_3$ (mod isotopy) or (ii) $i(\delta_2,\delta_3)\neq 0$.  In both cases, we derive a contradiction. Indeed, suppose that $(h_2,S_2)$ and $(h_3,S_3)$ are extended to $(h_4,S_4)$ via $f_2$ and $g_2$ respectively.
If (i) happens then we would have that $h_{4}$ restricted to $M_{2}$ has to be conjugated via $f_2$ to $\tau^2_{h_1(M)}\circ h_1$. On the other hand, we would have that $h_{4}$ restricted to~$M_{3}$ has to be conjugated via $g_2$ to $\tau^3_{h_1(M)}\circ h_1$. This is absurd because different powers of a Dehn twist cannot be conjugated. Now suppose that (ii) happens. Then on one hand, we have that~$h_{4}(\delta_{2})$ is a curve in $h_{4}(M_{2})$, thus $i(h_{4}(\delta_{2}), \gamma) = 0$ for every $\gamma \in h_4(M_2)$. On the other hand, $h_{4}$ extends $h_{3} = h_{1} \circ \tau_{M}^{3}$. Therefore inside $V_{3}$, $h_{4}$ first acts as $\tau_{M_{3}}^{3}$ and then maps to $h_{4}(V_{3})$. As $\delta_2$ intersects a curve in $M_3$, we have that $\tau_{M_{3}}^{3}(\delta_{2})$ intersects $\delta_{2}$ non-trivially. In particular, $\tau_{M_{3}}^{3}(\delta_{2})$ intersects a curve in $M_2$ nontrivially. Hence, $h_{4}(\delta_{2})$ would intersect~$h_4(M_2)$ non-trivially, which is a contradiction.

Therefore, the pairs $(h_{2},S_{2})$ and $(h_{3},S_{3})$ cannot be amalgamated from $(h_{-1},S_{-1})$.
\qed

\section{Dense conjugacy classes}
    \label{SEC:dense-conjugacy-classes}

In this section, we give a characterization of big mapping class groups having dense conjugacy classes. Our characterization uses the (partial) order on the space of ends~$E(\Sigma)$ defined by Mann and Rafi, and the notions of self-similar spaces of ends and non-displaceable subsurfaces which we recall here. We refer the reader to~\cite{mann-rafi} for details.

\begin{definition}
Let $\Sigma$ be an infinite-type surface and $x,y\in E(\Sigma)$. Let $\preccurlyeq$ be the binary relation on $E(\Sigma)$ where $y\preccurlyeq x$ if, for every neighbourhood $U$ of $x$, there exists a neighbourhood~$V$ of $y$ and $f\in\MCG(\Sigma)$ such that $f(V)\subset U$.
\end{definition}

A key point in the above definition is that $x$ does not have to be contained in $f(V)$. The binary relation defined above can be easily promoted to a partial order by considering an adequate quotient of the space of ends. Indeed, if for $x,y\in E(\Sigma)$ we say that $x$ and $y$ are of the same type if $y\preccurlyeq x$ and $x\preccurlyeq y$, then we obtain an equivalence relation on $E(\Sigma)$. Define $x\prec y$ if $x\preccurlyeq y$ but $x$ and $y$ are not of the same type. Then $\prec$ defines a partial order on the set of equivalence classes of ends.

\begin{proposition}~\cite[Proposition 4.7]{mann-rafi}
    \label{prop:topology_maximal_elements}
The partial order $\prec$ has maximal elements. Moreover, for every maximal element $x$, its equivalence class is either finite or a Cantor set.
\end{proposition}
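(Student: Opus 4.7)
The plan is to establish both claims through a compactness/Zorn argument, followed by a homogeneity argument on the equivalence class of a maximal end.

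\textbf{Existence of maximal elements.} For each $x\in E(\Sigma)$, set
$$E_x := \{\,y\in E(\Sigma): x \preccurlyeq y\,\}.$$
The key preliminary step is to verify that $E_x$ is a closed subset of $E(\Sigma)$. Suppose $y_n\to y$ with $y_n\in E_x$ for all $n$, and let $U$ be an open neighbourhood of $y$ in $E(\Sigma)$. Since $y_n\in U$ for $n$ large, $U$ is also a neighbourhood of some such $y_n$; applying $x\preccurlyeq y_n$ produces a neighbourhood $V$ of $x$ and $f\in\MCG(\Sigma)$ with $f^*(V)\subseteq U$. As $U$ was arbitrary, $x\preccurlyeq y$. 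For Zorn's lemma, take a chain $\{[x_\lambda]\}_{\lambda\in\Lambda}$: by transitivity, the family $\{E_{x_\lambda}\}_{\lambda\in\Lambda}$ is a nested collection of non-empty closed subsets of the compact space $E(\Sigma)$ (Proposition~\ref{p:1.2}), and the finite intersection property delivers a common point $y$. Then $[y]$ dominates every $[x_\lambda]$, so it is the desired upper bound.

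\textbf{Structure of $[x]$ for maximal $x$.} Let $x$ be maximal and $[x]\subseteq E(\Sigma)$ its equivalence class. Maximality forces $E_x=[x]$: any $y\in E_x$ satisfies $x\preccurlyeq y$, and if $[x]\neq [y]$ this would give $[x]\prec [y]$, contradicting maximality. Hence $[x]$ is closed in $E(\Sigma)$, and therefore compact, totally disconnected, and metrisable. I would now show that isolation in $[x]$ propagates from one point to all: suppose $y\in [x]$ is isolated in $[x]$, witnessed by an open set $U\subseteq E(\Sigma)$ with $U\cap [x]=\{y\}$. For any $z\in [x]$, the relation $z\preccurlyeq y$ (which holds because $[z]=[y]$) yields a neighbourhood $V$ of $z$ and $f\in\MCG(\Sigma)$ with $f^*(V)\subseteq U$. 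Since $f^*$ permutes equivalence classes---a direct consequence of the definition under conjugation---every $w\in V\cap [x]$ satisfies $f^*(w)\in U\cap [x]=\{y\}$; applying the same reasoning to $z$ itself gives $f^*(z)=y$, whence bijectivity of $f^*$ forces $w=z$. Thus $z$ is isolated in $[x]$. It follows that $[x]$ is either discrete, and hence finite by compactness, or perfect, in which case---being non-empty, compact, totally disconnected, and metrisable---Brouwer's characterisation identifies it with a Cantor set.

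\textbf{Main obstacle.} The most delicate point is the closedness of $E_x$, which exploits the one-sided nature of the Mann--Rafi order: the neighbourhood $V$ of $x$ may be chosen in response to a prescribed neighbourhood $U$ of $y$, rather than the other way around. A smaller but essential technicality is that every $f\in\MCG(\Sigma)$ preserves $\preccurlyeq$, and therefore permutes equivalence classes; this is used crucially in the homogeneity step.
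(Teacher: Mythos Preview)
The paper does not give its own proof of this proposition; it is simply quoted from \cite[Proposition~4.7]{mann-rafi}. Your argument is correct and follows the standard route (and, as far as one can tell, the one taken in Mann--Rafi): closedness of $E_x=\{y:x\preccurlyeq y\}$ from the asymmetry in the definition of $\preccurlyeq$, Zorn's lemma together with compactness of $E(\Sigma)$ for the existence of maximal classes, and then the observation that $\MCG(\Sigma)$ preserves $\preccurlyeq$ so that isolation in $[x]$ propagates from one point to all, forcing $[x]$ to be discrete (hence finite) or perfect (hence a Cantor set). There is nothing to compare against in the present paper.
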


Following $[ibid.]$, we denote by $E(x)$ the equivalence class of $x\in E(\Sigma)$ and $\mathcal{M}=\mathcal{M}(\Sigma)$ the set of maximal ends for $\prec$.

As an example, consider the flute surface $\Sigma=\mathbb{R}^2 \setminus \mathbb{Z}^2$. The space of ends of $\Sigma$ is homeomorphic to $\{0\} \cup \{\frac{1}{n}:n\in\mathbb{N}\}\subset\R$. Isolated ends are in bijection with $\{\frac{1}{n}:n\in\mathbb{N}\}$ and the point at infinity corresponds to $0$ and is the unique maximal end of $\Sigma$. It is rather easy to manipulate this example to create examples of infinite-type surface with two or more different classes of maximal ends.

\begin{definition}
    \label{def:non-displaceable}
    Let $\Sigma$ be an infinite-type surface.
\begin{itemize}
    \item Let $S\subset\Sigma$ be a connected subsurface\,\footnote{We do not assume here that $S$ is of finite type.}. We say that $S$ is \emph{non-displaceable} if for every $f\in\Homeo(\Sigma)$, we have that $f(S)\cap S\neq\emptyset$.
    \item The space of ends $(E(\Sigma),E_\infty(\Sigma))$ is called \emph{self-similar} if for any decomposition of $E(\Sigma)$ into pairwise disjoint clopen sets
    \[
    E(\Sigma)=E_1\sqcup E_2 \sqcup\ldots\sqcup E_n,
    \]
    there exists a clopen set $D\subset E_i$, for some $i\in\{1,\ldots,n\}$, such that $(D,D\cap E_\infty(\Sigma))$ is homeomorphic to $(E(\Sigma),E_\infty(\Sigma))$.
\end{itemize}
\end{definition}

Any infinite-type surface $\Sigma$ of positive, finite genus has an infinite space of ends and hence infinitely many different (\emph{i.e.}\ non-homeomorphic) non-displaceable finite-type subsurfaces. Indeed, any subsurface $S\subset\Sigma$ whose genus is equal to the genus of $\Sigma$ is non-displaceable. For an example of a surface without finite-type non-displaceable subsurfaces, consider again $\Sigma=\mathbb{R}^2\setminus\mathbb{Z}^2$. There is a natural action by translations of $\mathbb{Z}^2$ which, when extended to~$E(\Sigma)$, has no fixed points except for the unique maximal end and leaves the set of punctures invariant. As we see later, this surface has a very nice property: for any finite-type subsurface~$S\subset\Sigma$ and any compact subset $K\subset\mathbb{R}^2$ there exists $(n,m)\in\mathbb{Z}^2$ for which the corresponding translation $T_{(n,m)}\in\MCG(\Sigma)$ satisfies $T_{(n,m)}(S)\cap(K\cap\Sigma)=\emptyset$. In other words, not only that there are no non-displaceable subsurfaces of finite type but any finite-type subsurface can be displaced as far as desired.
    \begin{remark}
        \label{Rk:Interesting-remark}
    Every infinite-type surface $\Sigma$ has a (not necessarily connected\footnote{The definition of non-displaceable subsurface above can be extended to disconnected surfaces as follows: $S\subset\Sigma$ is non-displaceable if for every $f\in\Homeo(\Sigma)$ and every connected component $S_1$ of $S$, there exists a connected component $S_2$ of $S$ such that $f(S_1)\cap S_2\neq \emptyset$.}) non-dis\-place\-able infinite-type subsurface $\Sigma'$. This follows from the fact that we can always find a multicurve $M\subset\Sigma$ such that $\Sigma\setminus M=S\sqcup\Sigma'$ with $S$ a finite-type subsurface. Because of this rather trivial fact, we stress most of the time that non-displaceable subsurfaces under consideration are of finite type.
    \end{remark}

The main result of this section is the following:

\begin{theorem}[Characterization of big mapping class groups with dense conjugacy classes]
	\label{THM:Charact-Dense-Conjugacy-Classes}
 Let $\Sigma$ be an infinite-type surface. Then the following are equivalent:
 \begin{enumerate}
  \item $\MCG(\Sigma)$ has a dense conjugacy class.
  \item The surface $\Sigma$ has no non-displaceable finite-type subsurfaces and there exists a unique\,\footnote{That is, $\mathcal{M}$ has only one equivalence class and this class is a singleton formed by the point~$x_\infty$.} maximal end $x_\infty\in E(\Sigma)$.
  \item The surface $\Sigma$ has no non-displaceable finite-type subsurfaces, there exists a unique maximal end $x_\infty\in E(\Sigma)$ and every isomorphism $\phi:A\to B$ between finite substructures of the full curve graph $C_{\MCG(\Sigma)}(\Sigma)$ is induced by a homeomorphism $h\in Homeo^+(\Sigma)$ whose support is contained in $\Sigma\setminus U$, for some subsurface $U\subset \Sigma$ such that $U^\ast$ is a neighborhood of $x_\infty$.
 \end{enumerate}
\end{theorem}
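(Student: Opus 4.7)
The plan is to deploy Theorem~\ref{Theorem:Criteria}(1) applied to the Fra\"{\i}ss\'e class $\mathcal{F} = \mathcal{F}_{C_{\MCG(\Sigma)}(\Sigma)}$: $\MCG(\Sigma) \cong \Aut(C_{\MCG(\Sigma)}(\Sigma))$ has a dense conjugacy class if and only if $\mathcal{F}_p$ satisfies \textbf{JEP}. My first task is to translate \textbf{JEP} into surface language, in the style of Lemma~\ref{lemma:GeomLocalWAP}: \textbf{JEP} for $\mathcal{F}_p$ holds precisely when, for any two pairs $(h_1, S_1)$ and $(h_2, S_2)$, there exist $f_1, f_2 \in \MCG(\Sigma)$ with $f_1(S_1) \cap f_2(S_2) = \emptyset$, a finite-type subsurface $S \supset f_1(S_1) \cup f_2(S_2)$, and a mapping class $h$ such that $h \circ f_i \restriction S_i = f_i \circ h_i \restriction S_i$ up to isotopy for $i = 1,2$. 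With this translation in hand, I would establish the chain of implications (3) $\Rightarrow$ (1) $\Rightarrow$ (2) $\Rightarrow$ (3).

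For (3) $\Rightarrow$ (1), given pairs $(h_1,S_1)$ and $(h_2,S_2)$, I would use the absence of non-displaceable finite-type subsurfaces together with the uniqueness of $x_\infty$ (which, via self-similarity type arguments as in Mann--Rafi, guarantees arbitrarily many disjoint copies of any finite-type piece inside any neighborhood of $x_\infty$) to produce $f_1, f_2$ disjointly displacing $S_1$ and $S_2$ into a common finite-type subsurface $S$, while leaving a neighborhood $U$ of $x_\infty$ free. The support clause in (3) then yields mapping classes $\tilde h_1, \tilde h_2$ each implementing the required conjugates on $f_i(S_i)$ but with support in the complement of some neighborhood of $x_\infty$; a standard argument lets me arrange that their supports be disjoint, so their composition $h = \tilde h_1 \circ \tilde h_2$ verifies the geometric \textbf{JEP} condition. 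For (1) $\Rightarrow$ (2), I argue the contrapositive. If $\Sigma$ admits a non-displaceable finite-type subsurface, Theorem~\ref{THM:non-displaceable-the-nowhere-dense-versionintro} directly rules out dense conjugacy classes. If $\mathcal{M}$ has more than one equivalence class, or a single class that is a Cantor set (cf.~Proposition~\ref{prop:topology_maximal_elements}), I apply the Dehn twist trick: choose a separating essential curve $\alpha$ with the two complementary ends-sets representing the two types, and consider the pairs $(\tau_\alpha^2, S_\alpha)$ and $(\tau_\alpha^3, S_\alpha)$ where $S_\alpha$ is a regular neighborhood of $\alpha$. Any pair $(h,S)$ amalgamating these would yield two disjoint copies $\alpha_1, \alpha_2$ of $\alpha$ inside $S$ on which $h$ is conjugate to $\tau_{\alpha_1}^2$ and $\tau_{\alpha_2}^3$ respectively; since the orbit relations of $\mathcal{F}_p$ force $\alpha_1$ and $\alpha_2$ to lie in the same $\MCG(\Sigma)$-orbit, and since distinct non-zero powers of a Dehn twist are never conjugate, we reach a contradiction.

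For (2) $\Rightarrow$ (3), I need to verify the support condition. Given an isomorphism $\phi : A \to B$ between finite substructures of $C_{\MCG(\Sigma)}(\Sigma)$, ultrahomogeneity extends $\phi$ to some $\tilde h \in \MCG(\Sigma)$. I would then use the absence of non-displaceable finite-type subsurfaces combined with self-similarity at the unique maximal end $x_\infty$ to isotope $\tilde h$ to a homeomorphism $h$ that agrees with $\tilde h$ on $A \cup B$ (so still induces $\phi$) and is the identity on a suitably chosen neighborhood $U$ of $x_\infty$: concretely, one enlarges a finite-type subsurface containing $A \cup B$ and $\tilde h(A \cup B)$ to a subsurface $\Sigma \setminus U$ such that $U^*$ is a neighborhood of $x_\infty$, and uses a change-of-coordinates argument to reabsorb the action of $\tilde h$ on $U$ into $\Sigma \setminus U$. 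The main obstacle I anticipate is making the Dehn twist trick in (1) $\Rightarrow$ (2) robust against the richness of the signature of $C_{\MCG(\Sigma)}(\Sigma)$, which may constrain how copies of $\alpha$ embed; and dually, in (2) $\Rightarrow$ (3), ensuring that the isotopy needed to push $\tilde h$ to the identity near $x_\infty$ is available requires exploiting the self-similarity of the end space at $x_\infty$ in a genuinely non-trivial way (rather than merely displaceability), which is where the hypothesis about unique maximal end really enters.
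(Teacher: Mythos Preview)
Your overall strategy and the implications $(3)\Rightarrow(1)$ and $(2)\Rightarrow(3)$ are essentially those of the paper, but there is a genuine gap in your argument for $(1)\Rightarrow(2)$.

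The problem is with the Dehn twist trick as you apply it. You propose to take a single separating curve $\alpha$ and compare $(\tau_\alpha^2,S_\alpha)$ with $(\tau_\alpha^3,S_\alpha)$, then argue that any joint embedding would yield copies $\alpha_1,\alpha_2$ of $\alpha$ with $H$ acting as $\tau_{\alpha_1}^2$ near $\alpha_1$ and as $\tau_{\alpha_2}^3$ near $\alpha_2$, and claim this is a contradiction because ``distinct non-zero powers of a Dehn twist are never conjugate''. But this is not a contradiction: if $\alpha_1$ and $\alpha_2$ are disjoint and non-isotopic, the map $H=\tau_{\alpha_1}^2\tau_{\alpha_2}^3$ (or any extension of it) does the job. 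The Dehn twist trick (Lemma~\ref{DehnTwistTrick}) requires precisely that $\{\alpha\}$ \emph{cannot be separated from itself}, i.e.\ that for all $g,g'$ either $g(\alpha)=g'(\alpha)$ or $i(g(\alpha),g'(\alpha))\neq 0$. When $\Sigma$ has no non-displaceable finite-type subsurfaces this typically fails for any single curve: for instance, on a surface with exactly two maximal ends of different types (and no non-displaceable finite-type piece), there are plenty of disjoint, non-isotopic separating curves in the same orbit.

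The paper therefore proceeds quite differently for $(1)\Rightarrow(2)$. After disposing of the non-displaceable case (via Corollary~\ref{Coro:MapNotJEP}), it splits into: (i) if there are two distinct equivalence classes of maximal ends, either both are finite---in which case the pointwise stabilizer of their union is a proper closed normal subgroup of finite index, and Proposition~\ref{normal} kills dense conjugacy classes---or one is a Cantor set, which forces a non-displaceable pair of pants; (ii) if there is a single class of maximal ends of finite cardinality $>1$, the same normal-subgroup argument applies; (iii) if the single class is a Cantor set, the paper uses Lemma~\ref{splitting-cantor} to split $E(\Sigma)$ into three homeomorphic pieces and then compares not Dehn twists but the pair $(R,P)$ (an order-$3$ rotation of a pair of pants) with $(Id,P)$, which cannot be jointly embedded for a reason entirely different from the twist-power obstruction. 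You will need to supply these arguments, or something equivalent, to close the gap.
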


\textbf{Structure of this section.} We first present in Section~\ref{SSEC:Needed-Lemmas} technical lemmas needed for the proof of Theorem~\ref{THM:Charact-Dense-Conjugacy-Classes}. The usefulness of these lemmas goes beyond the aforementioned proof.
We discuss in Section~\ref{SSEC:An-Illustrative-Example} an illustrative example of a surface which satisfies the hypothesis of  Theorem~\ref{THM:Charact-Dense-Conjugacy-Classes}. We believe the discussion of this example makes the proof of  Theorem~\ref{THM:Charact-Dense-Conjugacy-Classes} more transparent. We then proceed with the proof of this theorem in Section~\ref{SSEC:Proof-Theorem-Dense-CC}.

\subsection{Useful lemmas}
    \label{SSEC:Needed-Lemmas}

Lemma~\ref{Lemma:Aut-Gamma-AreIsomorphic} gives two different points of view for any closed subgroup of the mapping class group of an infinite-type surface: as group of homeomorphisms or as group of automorphisms of an ultrahomogeneous structure. By Theorem~\ref{Theorem:Criteria}, given a closed subgroup $\Gamma<\MCG(\Sigma)$, the existence of a dense conjugacy class in $\Gamma$ is determined by whether the class $\mathcal{F}_p$ (w.r.t.~$\mathcal{F}_{C_{\Gamma}(\Sigma)}$, see Definition~\ref{DEF:FullCurveComplex}) satisfies or not the joint embedding property \textbf{JEP}, see Remark~\ref{rk:JEP_in_Fp}. We begin this section by explaining how to translate \textbf{JEP} in the context of $\mathcal{F}_p$ to the language of finite-type subsurfaces and homeomorphisms.

\begin{lemma}
  \label{lemma:GEP}
Let $\Sigma$ be an infinite-type surface and $\Gamma$ be a closed subgroup of $\MCG(\Sigma)$. Then~$\Gamma$ has a dense conjugacy class if and only if the following geometric version of {\bf JEP} holds:

\begin{itemize}
 \item[({\bf GEP})] given two finite-type subsurfaces $S, S'$ of $\Sigma$ and $h,h'\in \Gamma$, there are $g,g',H\in \Gamma$ such that
  \begin{enumerate}
   \item $g\circ h\restriction S$ is isotopic to $H\circ g\restriction S$, and
   \item $g'\circ h' \restriction S'$ is isotopic to $H\circ g'\restriction S'$.
  \end{enumerate}
\end{itemize}
Here {\bf GEP} stands for \emph{geometric embedding property}.
\end{lemma}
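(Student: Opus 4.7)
The plan is to invoke Theorem~\ref{Theorem:Criteria}(1) so that it suffices to show that $\mathcal{F}_p$ (defined w.r.t.\ the Fra\"{\i}ss\'e class $\mathcal{F}_{C_\Gamma(\Sigma)}$) satisfies \textbf{JEP} in the form of Remark~\ref{rk:JEP_in_Fp} if and only if \textbf{GEP} holds. The translation goes through the dictionary outlined in Section~\ref{SSEC:Translation} and uses Lemma~\ref{Lemma:Aut-Gamma-AreIsomorphic} to pass between automorphisms of the ultrahomogeneous structure $C_\Gamma(\Sigma)$ and elements of $\Gamma$.

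For the implication $\mathbf{JEP} \Rightarrow \mathbf{GEP}$, given $h, h' \in \Gamma$ and finite-type subsurfaces $S, S' \subset \Sigma$, I would select a finite \emph{Alexander system} $A$ of essential curves inside $S$, i.e.\ a finite collection with the property that any mapping class of $\Sigma$ that fixes each element of $A$ is isotopic to the identity on $S$ (cf.~\cite{hernandez-etal-2}); similarly choose $A' \subset S'$. Then $\langle A \cup h(A),\, h\restriction A : A \to h(A) \rangle$ and $\langle A' \cup h'(A'),\, h'\restriction A' : A' \to h'(A') \rangle$ lie in $\mathcal{F}_p$. Applying \textbf{JEP} produces a common structure in $\mathcal{F}_p$ into which both embed, and by ultrahomogeneity of $C_\Gamma(\Sigma)$ together with Lemma~\ref{Lemma:Aut-Gamma-AreIsomorphic} the two embeddings and the amalgamating isomorphism are induced by elements $g, g', H \in \Gamma$. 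The compatibility conditions imposed on curves by \textbf{JEP} translate, via the Alexander method, into the isotopy identities $g \circ h \restriction S$ isotopic to $H \circ g \restriction S$ and $g' \circ h' \restriction S'$ isotopic to $H \circ g' \restriction S'$, which is precisely \textbf{GEP}.

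For the converse $\mathbf{GEP} \Rightarrow \mathbf{JEP}$, given $\langle A, \psi : B \to C \rangle$ and $\langle A', \psi' : B' \to C'\rangle$ in $\mathcal{F}_p$, I would pick finite-type subsurfaces $S \supset A$ and $S' \supset A'$ and invoke ultrahomogeneity of $C_\Gamma(\Sigma)$ together with Lemma~\ref{Lemma:Aut-Gamma-AreIsomorphic} to obtain $h, h' \in \Gamma$ whose induced actions on $A$ and $A'$ realize $\psi$ and $\psi'$. Applying \textbf{GEP} to the pairs $(h,S)$ and $(h',S')$ produces $g, g', H \in \Gamma$; the set $A'' := g(A) \cup g'(A')$ together with the restriction of $H$ to $A''$ yields the amalgamating element of $\mathcal{F}_p$ that witnesses \textbf{JEP} as described in Remark~\ref{rk:JEP_in_Fp}, with $g$ and $g'$ providing the required embeddings. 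The main obstacle is the forward direction: one must choose the finite sets $A, A'$ carefully enough that the purely combinatorial information of a permutation of curves output by \textbf{JEP} in $\mathcal{F}_p$ can be upgraded to an isotopy of homeomorphisms on the finite-type subsurfaces $S$ and $S'$. The Alexander method is exactly the tool that makes this upgrade possible, and once it is in place the equivalence of the two properties is essentially a bookkeeping exercise.
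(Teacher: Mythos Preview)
Your proposal is correct and follows essentially the same route as the paper: Alexander systems to pass from \textbf{JEP} to \textbf{GEP}, and ultrahomogeneity of $C_\Gamma(\Sigma)$ plus Lemma~\ref{Lemma:Aut-Gamma-AreIsomorphic} to pass from \textbf{GEP} to \textbf{JEP}. The only imprecision is in the converse direction, where the partial isomorphism in the amalgamating element of $\mathcal{F}_p$ should be $H$ restricted to $g(B)\cup g'(B')$ (landing in $g(C)\cup g'(C')$) rather than to all of $A''=g(A)\cup g'(A')$, since $H(g(A))$ need not lie in $g(A)$; the paper handles this by taking $S$ to be a tubular neighbourhood of $B$ only, but your version is easily repaired.
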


\begin{proof}
Henceforth, $\mathcal{F}_p$ is defined w.r.t.\ the Fra\"iss\'e class $\mathcal{F}_{C_{\Gamma}(\Sigma)}$, where $C_\Gamma(\Sigma)$ is the full curve graph of $\Sigma$ w.r.t.\ the closed subgroup $\Gamma$, see Definition~\ref{DEF:FullCurveComplex}.

Let us first prove ${\bf GEP} \Rightarrow {\bf JEP}$  for $\mathcal F_p$. Fix $\langle A, \varphi: B\to C\rangle, \langle A', \varphi': B'\to C'\rangle \in \mathcal F_p$. We assume that  $A=B\cup C$ and $A'=B'\cup C'$ (see Remark~\ref{rmk:A=BUC}). Let $S$ and $S'$ be small tubular neighbourhoods of $B$ and $B'$ respectively (thought of as sets of fixed essential simple closed curves in $\Sigma$). From Lemma~\ref{Lemma:Aut-Gamma-AreIsomorphic}, we deduce that there exist $h,h'\in \Gamma$ such that $h\restriction S$ and $h'\restriction S'$ induce $\varphi$ and $\varphi'$ respectively. Now apply {\bf GEP} to get $g,g', H\in \Gamma$ such that $g\circ h\restriction S$ is isotopic to $H\circ g\restriction S$, and $g'\circ h'\restriction S'$ is isotopic to $H\circ g'\restriction S'$.
Let $A_0$ be the $g$-image of the curves in~$A$ and $A_0'$ the $g'$-image of the curves in $A'$, and define $B_0, B'_0, C_0,C'_0$ similarly for $B, B', C,C'$. Denote by $H_*$ the automorphism of $C_\Gamma(\Sigma)$ induced by $H$. Then $\langle A_0\cup A_0', H_*\restriction B_0\cup B_0' \rangle\in\mathcal{F}_p$ and witnesses {\bf JEP} for $\langle A, \varphi: B\to C\rangle$ and $\langle A', \varphi': B'\to C'\rangle$.

We now prove ${\bf JEP} \Rightarrow {\bf GEP}$. Given that the mapping class group of the disc is trivial, we fix
two non-simply connected finite-type subsurfaces $S,S'$ of~$\Sigma$ and $h,h' \in \Gamma$. Let~$B$ and~$B'$ be Alexander systems\,\footnote{These are sets of curves with the property that any homeomorphism that fixes them (modulo isotopy) has to be homotopic to the identity. See Section 2.3 in~\cite{FarbMargalit} for more details.} for~$S$ and $S'$ respectively. Define $C=h_*(B)$, $C'=h_*(B')$ and $\varphi:=h_* \restriction B$, $\varphi':=h_ * \restriction B'$. Let $A=B\cup C$ and $A'=B'\cup C'$. By hypothesis, there exists $\langle D,\Psi:E\to F\rangle$ witnessing {\bf JEP} for $\langle A,\varphi:B\to C \rangle$ and $\langle A',\varphi:B'\to C' \rangle$, that is, there exist embeddings $j:A\to D$ and $j':A'\to D$ such that $j\circ \varphi\subset \Psi\circ j$ and $j'\circ\varphi'\subset \Psi\circ j'$. Since $j,j'$ are embeddings, there exist $g,g'\in \Gamma$ such that $g_* \restriction A =j$ and $g'_* \restriction A' =j'$. Also, there exists $H\in \Gamma$ such that $H_* \restriction E=\Psi$. Given that $(H\circ g)_*$ agrees with $(g \circ h)_\ast$ on an Alexander system in~$S$, we have that $g\circ h$ is isotopic to $H\circ g$ on $S$. By the same arguments, we have that  $g'\circ h'$ is isotopic to $H\circ g'$ on $S'$.
\end{proof}

In what follows, we present a simple trick to disprove the existence of a dense conjugacy class for certain closed subgroups of the extended mapping class group. These are closed subgroups $\Gamma$ that contain, for every essential curve $\alpha\subset\Sigma$, a non-trivial power of the Dehn twist $\tau_\alpha$. The rough idea is that Dehn twists about curves that cannot be separated by elements of $\Gamma$ cannot be jointly embedded, in the sense of \textbf{GEP} (see Lemma~\ref{lemma:GEP}).

\begin{definition}
    \label{DEF:Cannot-separate}
Let $\Sigma$ be an infinite-type surface and $\Gamma<\MCG^*(\Sigma)$ a closed subgroup. We say that two (not necessarily distinct) multicurves $A = \{\alpha_{1}, \ldots, \alpha_{n}\}$ and $B = \{\beta_{1}, \ldots, \beta_{m}\}$ cannot be separated by $\Gamma$ if for all $f,g\in\Gamma$ there exists a pair $(\alpha_i,\beta_j)\in A\times B$ such that either $f(\alpha_i)=g(\beta_j)$ or $i(f(\alpha_i),g(\beta_j))\neq 0$.
\end{definition}

\begin{lemma}[Dehn twist trick]
        \label{DehnTwistTrick}
    Let $\Sigma$ be an infinite-type surface, and $\Gamma<\MCG(\Sigma)$ be a closed subgroup that contains for every essential curve $\alpha\subset\Sigma$ a  non-trivial power of the Dehn twist $\tau_\alpha$. Let $A = \{\alpha_{1}, \ldots, \alpha_{n}\}$ and $B = \{\beta_{1}, \ldots, \beta_{m}\}$ be two multicurves that cannot be separated by $\Gamma$. Then the multitwists $\tau_A^2$ and $\tau_B^3$ (see Definition~\ref{Def:minpowerandmultitwist}) cannot be jointly embedded, i.e.\ do not fulfill {\bf GEP}. In particular, $\Gamma$ does not have a dense conjugacy class.
\end{lemma}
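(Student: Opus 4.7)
The plan is to use Lemma~\ref{lemma:GEP} to translate everything into geometric language and then exploit the non-separability of $A$ and $B$ by $\Gamma$ to contradict \textbf{GEP} for the pair $(h,h')=(\tau_A^2,\tau_B^3)$. I take $S$ and $S'$ to be closed regular annular neighbourhoods of $A$ and $B$ respectively and suppose for contradiction that \textbf{GEP} holds with this data, producing $g,g',H\in\Gamma$ with $H\sim\tau_{g(A)}^2$ on $g(S)$ and $H\sim\tau_{g'(B)}^3$ on $g'(S')$. A useful preliminary observation is that $N_{f(c)}=N_c$ for every $f\in\Gamma$ and every essential curve $c$, since $\tau_{f(c)}^n=f\tau_c^n f^{-1}$ and both $f,f^{-1}\in\Gamma$.

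Because $A$ and $B$ cannot be separated by $\Gamma$, there are indices $i,j$ for which either $g(\alpha_i)=g'(\beta_j)=:\gamma$ or $i(g(\alpha_i),g'(\beta_j))\neq 0$. In the equal-curve case, the $\Gamma$-invariance of $N$ gives $N_{\alpha_i}=N_{\beta_j}=:N$, and the annulus $A_\gamma$ lies in $g(S)\cap g'(S')$; the two conditions then show that $H\restriction A_\gamma$ is simultaneously isotopic to $\tau_\gamma^{2N}\restriction A_\gamma$ and to $\tau_\gamma^{3N}\restriction A_\gamma$. Since $\MCG(A_\gamma)\cong\mathbb{Z}$ is detected by the twist count, this forces $2N=3N$, hence $N=0$, contradicting $N\geq 1$.

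In the crossing case, writing $c=g(\alpha_i)$ and $c'=g'(\beta_j)$, both local conditions force $H$ to fix each of the isotopy classes $c$ and $c'$ with non-zero twist coefficients $T_c(H)=2N_{\alpha_i}$ and $T_{c'}(H)=3N_{\beta_j}$. Isotoping $H$ so that it preserves a regular neighbourhood $\Sigma_0$ of $c\cup c'$ (possible because $H$ fixes both isotopy classes), the restriction $H\restriction\Sigma_0\in\MCG(\Sigma_0)$ has non-zero twists around both curves. By Nielsen--Thurston applied to the finite-type surface $\Sigma_0$, any curve with non-zero twist must lie in the canonical reduction system, which is a multicurve; but $c$ and $c'$ intersect essentially, so they cannot both belong to any multicurve. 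Contradiction. The resulting failure of \textbf{GEP} for $(\tau_A^2,\tau_B^3)$ concludes the proof via Lemma~\ref{lemma:GEP}. I expect the trickiest step to be the crossing case, where one must carefully justify that the twist coefficients of $H$ pass to the finite-type restriction and that non-zero twists guarantee membership in the canonical reduction system.
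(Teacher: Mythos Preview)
Your setup and case division match the paper's exactly: take $S=N(A)$, $S'=N(B)$, suppose \textbf{GEP} gives $g,g',H$, rewrite the two conditions as $H\restriction g(S)=\tau_{g(A)}^2\restriction g(S)$ and $H\restriction g'(S')=\tau_{g'(B)}^3\restriction g'(S')$, and then use non-separability to produce either a common curve or a crossing pair. In the equal-curve case your argument is the paper's (and in fact cleaner---you explicitly check $N_{\alpha_i}=N_{\beta_j}$ via $\Gamma$-invariance, whereas the paper sidesteps this by ``passing to an appropriate power'').

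In the crossing case the paper takes a much shorter route than you do. It simply observes that the first condition forces $H(c)=c$ for $c=g(\alpha_i)$, while the second condition says $H$ agrees with the non-trivial multitwist $\tau_{g'(B)}^3$ on the annular neighbourhood $g'(S')$; since $c$ passes through that neighbourhood and meets $c'=g'(\beta_j)$ essentially, $H$ cannot fix $c$. This stays entirely at the level of the elementary Dehn-twist facts listed in Section~\ref{SSEC:Dehn-Twists}. Your detour through a regular neighbourhood $\Sigma_0$ of $c\cup c'$, Nielsen--Thurston theory, and the canonical reduction system reaches the same contradiction, and the key implication ``non-zero twist about a fixed curve forces membership in the canonical reduction system'' is true (a fixed curve disjoint from the CRS lies in a periodic or pseudo-Anosov piece, and in either case the twist vanishes). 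But this is not a standard black box, and together with the passage of twist coefficients to $H\restriction\Sigma_0$ that you yourself flag, it amounts to more work than the paper's one-line argument. So your proof is correct but less economical; the Nielsen--Thurston machinery is unnecessary here.
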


\begin{proof}
To prove that $\Gamma$ does not have a dense conjugacy class, we use $A$ and $B$ to define homeomorphisms and compact subsurfaces that do not satisfy {\bf GEP} (see Lemma~\ref{lemma:GEP}). Modulo passing to an appropriate power, we can assume that $\tau_{\alpha_i},\tau_{\beta_j}\in\Gamma$ for every  $(\alpha_i,\beta_j)\in A\times B$. Let $\tau_{A} = \tau_{\alpha_{1}} \circ \cdots \circ \tau_{\alpha_{n}}, \tau_{B} = \tau_{\beta_{1}} \circ \cdots \circ \tau_{\beta_{m}} \in \Gamma$, $S = N(A)$ and $S' = N(B)$, where~$N(A)$ and~$N(B)$ denote closed regular neighbourhoods of $A$ and $B$ containing the support of $\tau_A$ and $\tau_B$ respectively. We proceed by contradiction.

Suppose there exist $g,g', H \in \Gamma$ such that $H \restriction g(S) = g \circ \tau_{A}^{2} \circ g^{-1} \restriction g(S)$ and $H \restriction g'(S') = g' \circ \tau_{B}^{3} \circ (g')^{-1} \restriction g'(S')$. We have that:
\begin{equation}
    \label{EQ:Dehn-Twist-trick}
\begin{array}{ccccc}
H \restriction g(S) & = & g \circ \tau_{A}^{2} \circ g^{-1} \restriction g(S) & = & \tau_{g(A)}^{2} \restriction g(S)\\
\\
H \restriction g'(S') & = & g' \circ \tau_{B}^{3} \circ (g')^{-1} \restriction g'(S') & = & \tau_{g'(B)}^{3} \restriction g'(S')
\end{array}
\end{equation}

By hypothesis, there exists a pair $(\alpha_i,\beta_j)\in A\times B$ such that either (i) $g(\alpha_i)=g'(\beta_j)$ or (ii) $i(g(\alpha_i),g'(\beta_j))\neq 0$. In both cases, (\ref{EQ:Dehn-Twist-trick}) leds to a contradiction. Indeed, (i) would imply that $H$ restricted to a neighborhood of $\gamma=g(\alpha_i)=g'(\beta_j)$ is isotopic to $\tau^2_\gamma$ and~$\tau^3_\gamma$, which is absurd.
Also, as $H$ restricted to $g(S)$ is the multitwist $\tau^2_{g(A)}$ by (\ref{EQ:Dehn-Twist-trick}), we have that $H(g(\alpha_i))=g(\alpha_i)$ for every $\alpha_i\in A$. But this is impossible if (ii) happens because, also by~(\ref{EQ:Dehn-Twist-trick}), $H$ restricted to $g'(S')$ is the multitwist $\tau^3_{g'(B)}$.
\end{proof}

Applying the ideas in the proof of the Dehn twist trick, we get the following:

\begin{theorem}
    \label{Theo:NotJEP}
Let $\Sigma$ be an infinite-type surface with a non-displaceable subsurface $S$ (not necessarily of finite type). Consider the Dehn twist $h=\tau_{\partial S}$.
Let $\Gamma$ be a closed subgroup of $\MCG(\Sigma)$ that contains, for every essential curve $\alpha\subset\Sigma$, a non-trivial power of the Dehn twist~$\tau_\alpha$. Then there are no $g, g^\prime, H \in \Gamma$ such that $H\circ g \restriction S=g \circ h \restriction S$ and $H\circ g^\prime \restriction S=g^\prime \restriction S$.
\end{theorem}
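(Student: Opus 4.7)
The plan is to proceed by contradiction. Suppose that $g, g', H \in \Gamma$ satisfy both equations. I first rewrite the equations intrinsically: the first says that $H\restriction_{g(S)}$ is isotopic, as a map $g(S) \to \Sigma$, to $\tau_{g(\partial S)}\restriction_{g(S)}$ (the boundary multitwist of $g(S)$, obtained from $h$ by conjugation by $g$); the second says that $H\restriction_{g'(S)}$ is isotopic to the inclusion $g'(S) \hookrightarrow \Sigma$. In particular, $H$ preserves both $g(S)$ and $g'(S)$ setwise, up to isotopy. This is the translation from the language of $\mathcal{F}_p$ and $\textbf{JEP}$ to the geometric language described in Section~\ref{SSEC:Needed-Lemmas}.

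Next, I invoke non-displaceability of $S$: applied to the homeomorphism $g^{-1}g' \in \Homeo(\Sigma)$, it produces a non-empty intersection $(g^{-1}g')(S)\cap S \neq \emptyset$, hence $T := g(S)\cap g'(S) \neq \emptyset$. On $T$, the two descriptions of $H$ must agree up to isotopy as maps $T \to \Sigma$, so $\tau_{g(\partial S)}\restriction_T$ has to be isotopic to the inclusion $T \hookrightarrow \Sigma$.

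The heart of the argument is to exhibit an essential simple closed curve $\beta$ that witnesses a contradiction. Since $g(S)$ and $g'(S)$ both carry the non-trivial topology of the non-displaceable $S$ and share a non-empty region, I can locate a component $\delta$ of the multicurve $g(\partial S)$ that meets the interior of $g'(S)$, and then choose an essential simple closed curve $\beta \subset g'(S)$ with $i(\beta,\delta)\neq 0$. The second equation forces $H(\beta) \simeq \beta$, while the first equation together with property (2) of Section~\ref{SSEC:Dehn-Twists} (non-trivial intersection numbers persist under Dehn twisting) forces $H(\beta) \simeq \tau_{g(\partial S)}(\beta) \not\simeq \beta$, which is the desired contradiction. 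This mirrors the Dehn twist trick of Lemma~\ref{DehnTwistTrick} applied with powers $1$ (coming from $h=\tau_{\partial S}$) and $0$ (coming from the identity on the $g'$ side).

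The main obstacle is the degenerate situation where $g'(S)$ sits entirely in the interior of $g(S)$ (or symmetrically), so that every component of $g(\partial S)$ lies outside $g'(S)$ and the witness curve above is unavailable. In that case I intend to iterate the argument: setting $k := g^{-1}g'$, non-displaceability of $S$ applied to the iterates $k^n$ yields a nested sequence of copies of $S$, and by considering the compatibility of the two descriptions of $H$ on an inner copy $k^n(S) \subset \mathrm{int}(S)$, I can transfer the contradiction to a configuration where some component of the boundary does cross the inner subsurface transversally. Equivalently, one can apply the Dehn twist trick of Lemma~\ref{DehnTwistTrick} to the multicurves obtained by refining $g(\partial S)$ and $g'(\partial S)$ with inessential companion curves chosen to obstruct separation.
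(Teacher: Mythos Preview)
Your overall strategy matches the paper's: rewrite the hypotheses as $H\restriction g(S)\simeq\tau_{g(\partial S)}$ and $H\restriction g'(S)\simeq\mathrm{Id}$, use non-displaceability to force $g(S)$ and $g'(S)$ to interact, and exhibit a curve on which these two descriptions of $H$ conflict. However, several steps are not yet proofs.

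\medskip

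\textbf{The central inference is overclaimed.} You choose $\beta\subset g'(S)$ with $i(\beta,\delta)\neq 0$ for some $\delta\subset g(\partial S)$, and assert that the first equation gives $H(\beta)\simeq\tau_{g(\partial S)}(\beta)$. But the first equation only describes $H$ on $g(S)$, and your $\beta$ crosses $\partial g(S)$, so $\beta\not\subset g(S)$; you have no direct control of $H(\beta)$ from that side. The paper does \emph{not} claim this. Instead, after a case split, it arranges $\gamma\in g(\partial S)$ to lie in the \emph{interior} of $g'(S)$ as an essential curve, takes a four-holed sphere $Q\subset g'(S)$ containing $\gamma$, and picks $\alpha\subset Q$ with $i(\alpha,\gamma)=2$. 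The contradiction is then local: $H(\alpha)=\alpha$ from the $g'(S)$ side, while on a collar of $\gamma$ inside $g(S)$ the map $H$ is a Dehn twist, which forces $H(\alpha)\neq\alpha$. To make this work the paper first enlarges $S$ so that it has enough complexity for $Q$ to exist; you omit this preparation.

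\medskip

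\textbf{The case analysis is missing.} You jump directly to ``locate a component $\delta$ of $g(\partial S)$ that meets the interior of $g'(S)$'' without justifying why such a $\delta$ exists. The paper separates two cases: either some $\gamma\in g(\partial S)$ and $\delta\in g'(\partial S)$ have $i(\gamma,\delta)\neq 0$ (handled by the Dehn twist trick on the boundaries themselves), or all such pairs are disjoint. Only in the second case, after further ruling out that every boundary curve of $g(S)$ is parallel to one of $g'(S)$, does non-displaceability produce a $\gamma\in g(\partial S)$ that is essential in $g'(S)$. Your write-up collapses these situations.

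\medskip

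\textbf{The ``degenerate nested'' paragraph does not work.} Setting $k=g^{-1}g'$, non-displaceability of $S$ gives only $k^n(S)\cap S\neq\emptyset$ for each $n$; it does not yield a nested chain of copies of $S$, and there is no mechanism by which iterating produces a boundary component that eventually crosses. The alternative suggestion of ``refining with inessential companion curves'' is not a proof either: inessential curves carry no Dehn twist information and cannot obstruct separation in the sense of Definition~\ref{DEF:Cannot-separate}. This paragraph should be replaced by the paper's actual dichotomy (boundaries intersect vs.\ boundaries disjoint), which already covers every configuration once $S$ has been enlarged as above.
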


\begin{proof}

By possibly enlarging $S$, we can assume that the boundary of $S$ consists of a disjoint union of non-isotopic essential simple closed curves in $\Sigma$. Moreover, we can assume that the topological complexity of $S$ is as large as needed and that the support of $h$ is actually contained in $S$.

We proceed by contradiction. Suppose that there exist $g, g^\prime, H \in \Gamma$ such that $H\circ g \restriction S=g\circ h \restriction S$ and $H\circ g^\prime \restriction S=g^\prime \restriction S$. Then we have
\begin{eqnarray}\label{Eq1}
H \restriction g(S) = g\circ h\circ g^{-1} \restriction g(S) & \mbox{ and } & H \restriction g^\prime(S) = Id \restriction g^\prime(S).
\end{eqnarray}

We divide the proof into two cases:

\textbf{Case 1}. There exist $\gamma \subseteq g(\partial S)$ and $\delta \subseteq g^\prime(\partial S)$ such that ${\rm{i}}(\gamma,\delta) \neq 0$. In other words, elements of $\partial S$ cannot be separated by $g$ and $g'$.
Then we get a contradiction using the ideas in the proof of the Dehn twist trick. Indeed, Equation~(\ref{Eq1}) tells us that $H \restriction g(S)$ is the multitwist around the curves in $\partial g(S)$ and on the other hand $H \restriction g^\prime(S) = Id \restriction g^\prime(S)$. But this is impossible, given that $\gamma$ and $\delta$ intersect.

\textbf{Case 2}. For all $\gamma \subseteq g(\partial  S)$ and $\delta \subseteq g'(\partial S)$, ${\rm{i}}(\gamma,\delta) = 0$.
If each boundary curve in $g(\partial S)$ is isotopic to a boundary curve in $g'(\partial S)$ then we obtain a contradiction from~(\ref{Eq1}) because~$H$ cannot simultaneously be isotopic to a Dehn multitwist and to the identity on $g(\partial S)$.
As~$S$ is non-displaceable, there exists $\gamma \subseteq g(\partial  S)$ which intersects $g^\prime(S)$ and is not isotopic to a curve in $g'(\partial S)$, hence has to be an essential curve of $g^\prime(S)$. Let $Q$ be a four-punctured subsurface of $g^\prime(S)$ which contains $\gamma$ in its interior as an essential curve. This is possible because, as remarked above, $S$ has enough topological complexity. Let~$\alpha$ be an essential curve of $Q$ with ${\rm{i}}(\alpha,\gamma)=2$, see Figure \ref{Fig:Case2}. Since $H \restriction g^\prime(S) = Id \restriction g^\prime(S)$ then $H(\alpha)=\alpha$. On the other hand, by Equation (\ref{Eq1}), $H$ restricted to a neighbourhood of $\gamma$ is a Dehn twist, hence $\alpha \neq H(\alpha)$ which is a contradiction.
\begin{figure}[!ht]
\centering
 \begin{tikzpicture}[scale=0.8]
  \draw (-4,3.5) .. controls +(-70:0.2cm) and +(-175:2cm) .. (-0.5,2.5)
  .. controls +(5:2cm) and +(-120:1.3cm) .. (3.5,4);
  \draw (-3.5,-1.5) .. controls +(50:0.2cm) and +(-170:2cm) .. node[below]{$Q$} (0,0)
  .. controls +(10:2cm) and +(140:0.5cm) .. (4,-1);
  \draw (-5,2.5) .. controls +(-35:0.5cm) and +(95:1cm) .. (-3.5,1)
  .. controls +(-85:0.8cm) and +(45:0.5cm) .. (-5,-0.8);
  \draw (4.5,3) .. controls +(-150:0.5cm) and +(85:1cm) .. (3,1.5)
  .. controls +(-95:0.8cm) and +(140:0.5cm) .. (4.5,-0.2);

  \draw[thick] (0,0) .. controls +(10:0.2cm) and +(5:0.2cm) .. node[pos=0.7, right]{$\gamma$} (-0.5,2.5);
  \draw[thick, dashed] (-0.5,2.5) .. controls +(-175:0.2cm) and +(-170:0.2cm) .. (0,0);

  \draw[thick, color=red] (-3.5,1) .. controls +(-85:0.3cm) and +(-95:0.3cm) .. node[pos=0.8, below]{$\alpha$} (3,1.5);
  \draw[thick, dashed, color=red] (3,1.5) .. controls +(85:0.3cm) and +(95:0.3cm) .. (-3.5,1);
 \end{tikzpicture}
	\caption{ }
	\label{Fig:Case2}
\end{figure}
\end{proof}

\begin{corollary}\label{Coro:MapNotJEP}
Let $\Sigma$ be an infinite-type surface with a finite-type non-displaceable subsurface $S$, and $\Gamma$ be a closed subgroup of $\MCG(\Sigma)$ that contains for every essential curve $\alpha\subset\Sigma$, a non-trivial power of the Dehn twist $\tau_\alpha$. Then $\Gamma$ does not have a dense conjugacy class.
\end{corollary}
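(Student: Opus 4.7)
The plan is to deduce this corollary directly from Theorem~\ref{Theo:NotJEP} combined with the geometric reformulation of the joint embedding property, namely Lemma~\ref{lemma:GEP}. The contrapositive of the equivalence in Lemma~\ref{lemma:GEP} states that if \textbf{GEP} fails for some particular choice of two finite-type subsurfaces and two elements of $\Gamma$, then $\Gamma$ has no dense conjugacy class. So it suffices to produce such a witness.

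Concretely, I would take both finite-type subsurfaces in the statement of \textbf{GEP} to be the given non-displaceable subsurface $S$, set $h'=\mathrm{id}\in\Gamma$, and choose $h$ to be a suitable multitwist supported on $\partial S$ that lies in $\Gamma$. Since $\partial S$ is a finite collection of essential simple closed curves and $\Gamma$ contains a non-trivial power of the Dehn twist about each such curve, one can use the multitwist $\tau_{\partial S}$ from Definition~\ref{Def:minpowerandmultitwist} (i.e.\ the product of appropriate positive powers of the Dehn twists about the components of $\partial S$); this element is in $\Gamma$ and plays the role of the Dehn twist $h$ that appears in the hypothesis of Theorem~\ref{Theo:NotJEP}. (Note that Theorem~\ref{Theo:NotJEP} is phrased for $h=\tau_{\partial S}$, but its proof only uses that $h$ acts on a neighbourhood of $\partial S$ as a non-trivial multitwist supported there, so it applies verbatim to this choice.)

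Now assume for contradiction that $\Gamma$ has a dense conjugacy class. By Lemma~\ref{lemma:GEP}, \textbf{GEP} holds, so applied to $S=S'$, $h$, and $h'=\mathrm{id}$ it yields elements $g,g',H\in\Gamma$ such that $g\circ h\restriction S$ is isotopic to $H\circ g\restriction S$ and $g'\restriction S$ is isotopic to $H\circ g'\restriction S$. But this is exactly the conclusion ruled out by Theorem~\ref{Theo:NotJEP} under the hypothesis that $S$ is non-displaceable and $\Gamma$ contains non-trivial powers of all Dehn twists. This contradiction finishes the proof.

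There is no serious obstacle here; the only small point to verify is that the multitwist realising $h$ indeed lies in $\Gamma$ and that Theorem~\ref{Theo:NotJEP} applies to this $h$ rather than literally to $\tau_{\partial S}$. The finite-type hypothesis on $S$ is used only to ensure that $\partial S$ is a finite multicurve, so that such a multitwist can be formed inside $\Gamma$; the rest of the argument is a clean invocation of the two previously established results.
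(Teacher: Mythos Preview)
Your proposal is correct and essentially identical to the paper's own proof: the paper also takes $S=S'$, $M=\partial S$, $h=\tau_M$ (the multitwist from Definition~\ref{Def:minpowerandmultitwist}) and $h'=\mathrm{Id}_\Sigma$, then invokes Theorem~\ref{Theo:NotJEP} together with Lemma~\ref{lemma:GEP}. Your explicit remark that Theorem~\ref{Theo:NotJEP} applies to the $\Gamma$-multitwist rather than the literal product of single Dehn twists is a valid clarification, and indeed the paper's notation $\tau_M$ already refers to this $\Gamma$-version.
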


\begin{proof}
Consider $S=S'$, $M=\partial S$, $h=\tau_{M}$ and $h^\prime=Id_\Sigma$. Then by Theorem~\ref{Theo:NotJEP}, the homeomorphisms $h$ and $h'$ do not satisfy \textbf{GEP} as stated in Lemma~\ref{lemma:GEP}.
\end{proof}

\subsection{An illustrative example}
    \label{SSEC:An-Illustrative-Example}
    Consider the flute surface $\Sigma = \R^2 \backslash \Z^2$, which has genus zero, no non-displaceable finite-type subsurfaces and its space of ends is homeomorphic to~$\omega + 1$. In what follows, we explain why $\MCG(\R^2 \backslash \Z^2)$ satifies \textbf{GEP} and thus has at least one dense conjugacy class. The proof of Theorem~\ref{THM:Charact-Dense-Conjugacy-Classes} in the next section is inspired by this example.

In the context of this surface, we say that a subset $X \subset \Sigma$ is \emph{bounded away from the maximal end} if there is a compact set $K \subset \R^2$ such that $X \subset K\cap\Sigma$.

Let $f, h \in \MCG(\Sigma)$ and $S, S' \subset \Sigma$ be finite-type subsurfaces. Suppose first that we are lucky and have that $S\cap {\supp}(h)=h(S')\cap {\supp}(f)=\emptyset$. Then $H = f \circ h$ satisfies
	\begin{equation}
        \label{EQ:Disjoint-support-case}
    	H \restriction S = f \restriction S, \qquad H \restriction S' = h \restriction S'
    \end{equation}
and the pairs $(f,S)$ and $(h,S')$ can be jointly embedded, in the sense of \textbf{GEP}, see Lemma~\ref{lemma:GEP}. In the context of $\Sigma = \R^2 \backslash \Z^2$, we can deal with the general case because we have the following two properties:
\begin{enumerate}
	\item \label{movable_support} If $X \subset \Sigma$ is a subsurface bounded away from the maximal end, then there exists an element $g \in \MCG(\Sigma)$ whose support is bounded away from the maximal end such that $X \cap g(X) = \emptyset$.

	\item \label{localization_of_homeos} If $S\subset \Sigma$ is a finite-type subsurface and $f \in \MCG(\Sigma)$, then there exists $f_0 \in \MCG(\Sigma)$ such that $f_0$ has support bounded away from the maximal end and
		\[	f \restriction S = f_0 \restriction S.	\]
\end{enumerate}

Using (2) above, we can change every $(f,S)$ and $(h,S')$ for pairs $(f_0,S)$ and $(h_0,S')$ such that $f \restriction S = f_0 \restriction S$, $h \restriction S' = h_0 \restriction S'$ and both $f_0$ and $h_0$ have supports which are bounded away from the maximal end.
We now consider a subsurface $X\subset\Sigma$ bounded away from the maximal end and containing ${\supp}(f_0)$, ${\supp}(h_0)$, $h_0(S')$ and $S$. By (1), there exists $g\in \MCG(\Sigma)$ with support bounded away from the maximal end and $X \cap g(X) = \emptyset$. In particular, ${\supp(f_0)}\cap (g\circ h_0(S'))=\emptyset$, hence $f_0\restriction g\circ h_0(S') = Id \restriction g\circ h_0(S')$, and $(g \circ h_0 \circ g^{-1})\restriction S = Id\restriction S$.

Define $H:=f_0 \circ (g \circ h_0 \circ g^{-1})$, then:
	\begin{equation}
        \label{EQ:Illustrative-GEP}
    	H \circ g \restriction S' = f_0\circ g \circ h_0 \restriction S'= g \circ h_0 \restriction S' = g \circ h \restriction S',
    	\qquad H \restriction S = f_0 \restriction S = f \restriction S.
    \end{equation}

This means that $(f,S)$ and $(h,S')$ satisfy \textbf{GEP}, thus by Lemma~\ref{lemma:GEP}, we have that $\MCG(\Sigma)$ satisfies \textbf{JEP} and hence it has a dense conjugacy class.

We now provide a proof of the properties listed above.

\begin{proof}[Proof of (\ref{movable_support})]
\begin{figure}[b]
\centering
 \begin{tikzpicture}[scale=0.8]
  \draw (0,0) circle (2.3cm);
  \draw (50:2.3) node[above right]{$c$};
  \draw[color=green!50!black] (-1,0) node{$S$} circle (0.6cm);
  \draw[color=green!50!black] (-1.4,0.3) node[above left]{$\alpha$};
  \draw[color=blue] (1,0) node{$S'$} circle (0.6cm);
  \draw[color=blue] (1.4,0.3) node[above right]{$\alpha'$};

  \draw (3.5,0) node{$\leadsto$};

  \begin{scope}[xshift=7cm]
   \draw (2,2) .. controls +(50:0.4cm) and +(130:0.4cm) .. node[pos=0.8, above]{$c$} (-2,2)
   .. controls +(-50:0.4cm) and +(-130:0.4cm) .. (2,2);
   \draw (-2,2) .. controls +(-50:0.7cm) and +(50:0.5cm) .. (-2,-2);
   \draw (2,2) .. controls +(-130:0.7cm) and +(130:0.5cm) .. (2,-2);
   \draw (-0.5,-2) .. controls +(100:2cm) and +(80:2cm) .. (0.5,-2);
   \draw[->] (-1.4,1) .. controls +(-30:0.4cm) and +(-150:0.4cm) .. node[below]{$g$} (1.4,1);

   \draw[color=green!50!black] (-2,-2) .. controls +(-50:0.2cm) and +(-80:0.2cm) .. node[pos=0.3, below]{$\alpha$} (-0.5,-2);
   \draw[dashed, color=green!50!black] (-0.5,-2) .. controls +(50:0.2cm) and +(80:0.2cm) ..  (-2,-2);

   \draw[color=blue] (0.5,-2) .. controls +(-50:0.2cm) and +(-130:0.2cm) .. node[pos=0.8, below]{$\alpha'$} (2,-2);
   \draw[dashed, color=blue] (2,-2) .. controls +(50:0.2cm) and +(130:0.2cm) ..  (0.5,-2);
  \end{scope}
 \end{tikzpicture}
	\caption{Homeomorphism of bounded support with $g(S) = S'$.}
	\label{Fig:Half_twist_pants}
\end{figure}
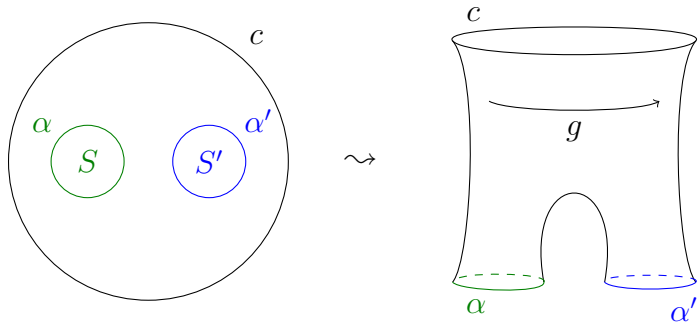
There is always a finite-type closed subsurface $S \subset \Sigma$ with $X \subset S$ such that the boundary of $S$ is a simple closed curve $\alpha \subset \Sigma$. If we translate $S$ along~$\R^2$, we can find another finite-type closed subsurface $S'$ homeomorphic to $S$ whose boundary is $\alpha'$ and such that $S \cap S' = \emptyset$. If we take a simple closed curve $c$ encircling~$\alpha$ and $\alpha'$ such that $\{\alpha,\alpha',c\}$ bounds a pair of pants, we may define $g$ as a half twist around the waist of the pair of pants so that it switches $\alpha$ with $\alpha'$ and $S$ with $S'$, and thus it is supported on a closed subsurface of $\Sigma$ whose boundary is $c$. See Figure~\ref{Fig:Half_twist_pants}.
\end{proof}

\begin{proof}[Proof of (\ref{localization_of_homeos})]
For every simple closed curve $\alpha \subset \Sigma$, denote by $S_\alpha$ the finite-type subsurface whose boundary is $\alpha$. Let $\alpha\subset\Sigma$ now be a curve such that $S \subset S_\alpha$ and observe that $f(S_\alpha) = S_{f(\alpha)}$.
Suppose first that $\alpha$ and $f(\alpha)$ are non-isotopic curves, so that $f(\alpha)$ cannot be contained in~$S_\alpha$ (this is because $S_\alpha$ and $S_{f(\alpha)}$ are punctured discs with the same number of punctures, so if one is contained in the other we have that their boundaries are isotopic). If $\alpha \cap f(\alpha) = \emptyset$ then there is a bigger circle $c$ bounding $\alpha$ and~$f(\alpha)$ such that $\{c,\alpha, f(\alpha) \}$ determines a pair of pants. In this case, we can define $f_0$ to be a homeomorphism whose support is in $S_c$ that acts as a half-twist in this pair of pants just as in the proof of (\ref{movable_support}) and such that it agrees with $f$ in the interior of $S_\alpha$. If $\alpha \cap f(\alpha) \neq \emptyset$, then a more complicated behaviour could happen, see Figure~\ref{Fig:Intersecting_curves}.
\begin{figure}
\centering
  \begin{tikzpicture}
   \draw[color=blue] (2,0.4) node{$\alpha$};
   \draw[color=blue] (0:2.35cm) arc (0:100:1cm)
    arc (-80:-160:0.8cm)
    arc (20:220:1cm)
    arc (40:-40:0.8cm)
    arc (-220:-20:1cm)
    arc (160:80:0.8cm)
    arc (-100:0:1cm);

   \draw[color=brown] (2,2) node[above]{$f(\alpha)$};
   \draw[color=brown] (60:2.35cm) arc (60:160:1cm)
    arc (-20:-100:0.8cm)
    arc (80:280:1cm)
    arc (100:20:0.8cm)
    arc (-160:40:1cm)
    arc (220:140:0.8cm)
    arc (-40:60:1cm);

   \draw[color=red] (0:2.55cm) node[right]{$\beta$} arc (0:64:1.2cm);
   \draw[color=red] (60:2.55cm) arc (60:-4:1.2);
   \draw[color=red] (60:2.55cm) arc (60:124:1.2);
   \draw[color=red] (120:2.55cm) arc (120:56:1.2);
   \draw[color=red] (120:2.55cm) arc (120:184:1.2);
   \draw[color=red] (180:2.55cm) arc (180:116:1.2);
   \draw[color=red] (180:2.55cm) arc (180:244:1.2);
   \draw[color=red] (240:2.55cm) arc (240:176:1.2);
   \draw[color=red] (240:2.55cm) arc (240:304:1.2);
   \draw[color=red] (300:2.55cm) arc (300:236:1.2);
   \draw[color=red] (300:2.55cm) arc (300:364:1.2);
   \draw[color=red] (0:2.55cm) arc (0:-64:1.2);

   \fill (-2.1,0) circle (1pt);
   \fill (-0.7,0) circle (1pt);
   \fill (0.7,0) circle (1pt);
   \fill (2.1,0) circle (1pt);
   \fill (-0.7,1.4) circle (1pt);
   \fill (0.7,1.4) circle (1pt);
   \fill (-0.7,-1.4) circle (1pt);
   \fill (0.7,-1.4) circle (1pt);
  \end{tikzpicture}
	\caption{Configuration of intersection between $\alpha$ and $f(\alpha)$.}
	\label{Fig:Intersecting_curves}
\end{figure}
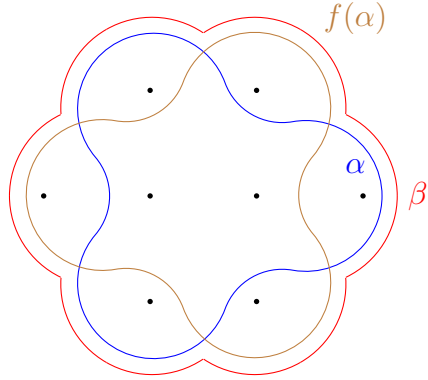
In this case, we can take $\beta \subset \Sigma$ to be the boundary component of a regular neighborhood of $\alpha \cup f(\alpha)$ as in Figure~\ref{Fig:Intersecting_curves}. Note that $S_\beta \backslash S_\alpha$ and $S_\beta \backslash S_{f(\alpha)}$ are punctured annuli, each with the same amount of punctures. By the classification of finite-type surfaces, there exists a homeomorphism $h\in\Homeo^+(\Sigma)$, which is the identity in $\Sigma\setminus S_\beta$ (in particular it fixes the curve $\beta$), agrees with $f$ on the curve $\alpha$ and switches
the annuli $S_\beta \backslash S_\alpha$ and $S_\beta \backslash S_{f(\alpha)}$, see Figure~\ref{Fig:complement_of_curves}. We define then the homeomorphism:

\begin{equation}
       \label{EQ:homeo-property2}
       f_0(z) =
       \begin{cases}
       f(z)  & z \in \overline{S_\alpha} \\
       h(z)  & z \in S_\beta \backslash S_\alpha \\
       z     & z \in \Sigma  \backslash S_\beta
       \end{cases}
\end{equation}

\begin{figure}[!ht]
\centering
  \begin{tikzpicture}[scale=0.5]
   \path[pattern=north east lines, pattern color=blue] (0,0) circle (2.8cm);
   \fill[color=white] (0:2.35cm) arc (0:100:1cm)
    arc (-80:-160:0.8cm)
    arc (20:220:1cm)
    arc (40:-40:0.8cm)
    arc (-220:-20:1cm)
    arc (160:80:0.8cm)
    arc (-100:0:1cm);
   \draw (-90:4) node{$S_\beta \setminus S_\alpha$};

   \draw[color=blue] (1.8,0.4) node{$\alpha$};
   \draw[color=blue] (0:2.35cm) arc (0:100:1cm)
    arc (-80:-160:0.8cm)
    arc (20:220:1cm)
    arc (40:-40:0.8cm)
    arc (-220:-20:1cm)
    arc (160:80:0.8cm)
    arc (-100:0:1cm);

   \draw[color=red] (0,0) circle (2.8cm);
   \draw[color=red] (-2.8,0) node[left]{$\beta$};

   \fill (-2.1,0) circle (1pt);
   \fill (-0.7,0) circle (1pt);
   \fill (0.7,0) circle (1pt);
   \fill (2.1,0) circle (1pt);
   \fill (-0.7,1.4) circle (1pt);
   \fill (0.7,1.4) circle (1pt);
   \fill (-0.7,-1.4) circle (1pt);
   \fill (0.7,-1.4) circle (1pt);

   \draw (4.5,0) node{$\cong$};
   \draw (4.5,0.8) node{$h$};

   \begin{scope}[xshift=9cm]
   \path[pattern=north east lines, pattern color=green] (0,0) circle (2.8cm);
   \fill[color=white] (60:2.35cm) node[below]{$f(\alpha)$} arc (60:160:1cm)
    arc (-20:-100:0.8cm)
    arc (80:280:1cm)
    arc (100:20:0.8cm)
    arc (-160:40:1cm)
    arc (220:140:0.8cm)
    arc (-40:60:1cm);
   \draw (-80:3.8) node{$S_\beta \setminus S_{f(\alpha)}$};

   \draw[color=brown] (0.5,0.6) node {$f(\alpha)$};
   \draw[color=brown] (60:2.35cm)  arc (60:160:1cm)
    arc (-20:-100:0.8cm)
    arc (80:280:1cm)
    arc (100:20:0.8cm)
    arc (-160:40:1cm)
    arc (220:140:0.8cm)
    arc (-40:60:1cm);

   \draw[color=red] (0,0) circle (2.8cm);
   \draw[color=red] (2.8,0) node[right]{$\beta$};

   \fill (-2.1,0) circle (1pt);
   \fill (-0.7,0) circle (1pt);
   \fill (0.7,0) circle (1pt);
   \fill (2.1,0) circle (1pt);
   \fill (-0.7,1.4) circle (1pt);
   \fill (0.7,1.4) circle (1pt);
   \fill (-0.7,-1.4) circle (1pt);
   \fill (0.7,-1.4) circle (1pt);
   \end{scope}

  \end{tikzpicture}
	\caption{Homeomorphism $h$ switching the punctured annuli $S_\beta \backslash S_\alpha$ and $S_\beta \backslash S_{f(\alpha)}$.}
	\label{Fig:complement_of_curves}
\end{figure}
Observe that ${\supp}(f_0)\subset S_\beta$ and hence this support is bounded away from the maximal end.
The remaining case to consider is when $\alpha$ is isotopic to $f(\alpha)$. Here the proof becomes rather trivial because $f$ is isotopic to a function which is the identity on $\alpha$ and which sends the interior of $\alpha$ to itself, so we can define $f_0$ as $f$ in the interior of $S_\alpha$ and as the identity in the rest of $\Sigma$.
\end{proof}

\subsection{Proof of Theorem~\ref{THM:Charact-Dense-Conjugacy-Classes}}
    \label{SSEC:Proof-Theorem-Dense-CC}
    
$\mathbf{(3)\Rightarrow(1)}$. The hypothesis in (\textbf{3}) distills the properties of the surface $\mathbb{R}^2\setminus\mathbb{Z}^2$ needed to guarantee that the class $\mathcal{F}_p$ (w.r.t.\ the Fra\"iss\'e class $\mathcal{F}_{C_{\MCG(\Sigma)}(\Sigma)}$, see Definition~\ref{DEF:FullCurveComplex}) satisfies \textbf{JEP}. This part of the proof is very much inspired by the illustrative example presented in Section~\ref{SSEC:An-Illustrative-Example}.

\begin{definition}
    \label{DEF:Bounded-Away}
Let $\Sigma$ be an infinite-type surface with a unique maximal end $x_\infty$. For any given separating curve $\gamma\subset\Sigma$, let $I_{\gamma}$ be the closed subsurface of $\Sigma$ bounded by $\gamma$ whose space of ends contains $x_\infty$ and let $E_\gamma$ be the complement of the interior of $I_\gamma$ in $\Sigma$.
We say that $X\subset \Sigma$ is \emph{bounded away from the maximal end} if there exists a separating curve $\gamma$ such that~$X$ is contained in the interior of $E_\gamma$.
\end{definition}

We now show that the class $\mathcal{F}_p$ (w.r.t.\ the Fra\"iss\'e class $\mathcal{F}_{C_{\MCG(\Sigma)}(\Sigma)}$, see Definition~\ref{DEF:FullCurveComplex}) satisfies~$\textbf{JEP}$. The proof is similar to the proof of Lemma~\ref{lemma:GEP}. Let $\langle A, \varphi: B\to C\rangle, \langle A', \varphi': B'\to C'\rangle \in \mathcal F_p$. We assume that $A=B\cup C$ and $A'=B'\cup C'$ (see Remark~\ref{rmk:A=BUC}). Let~$S$ and $S'$ be small tubular neighbourhoods of $B$ and $B'$ respectively (thought of as sets of fixed essential simple closed curves in $\Sigma$).
From the hypothesis, we deduce that there exist $f,h\in \MCG(\Sigma)$ such that $f\restriction S$ and $h\restriction S'$ induce $\varphi$ and $\varphi'$ respectively. Moreover, we can suppose that both ${\supp(f)}$ and ${\supp(h)}$ are bounded away from the maximal end.

If $S\cap {\supp}(h)=h(S')\cap {\supp}(f)=\emptyset$ then $H = f \circ h$ satisfies $(\ref{EQ:Disjoint-support-case})$ and hence the pairs $(f,S)$ and $(h,S')$ can be jointly embedded, in the sense of \textbf{GEP}. Using the same arguments as the ones used in the proof of Lemma~\ref{lemma:GEP} to show that \textbf{GEP} implies \textbf{JEP}, we can conclude that the classes $\langle A, \varphi: B\to C\rangle, \langle A', \varphi': B'\to C'\rangle \in \mathcal F_p$ can be jointly embedded.

For the general case, we use the following:

\emph{Claim}: there exist separating curves $\alpha$ and $\gamma$ such that:
\begin{enumerate}
    \item ${\supp}(f)\cup{\supp}(h)\cup h(S')\cup S$ is contained in $ E_\alpha$,\\
    \item $E_\alpha\cap E_\gamma=\emptyset$, and \\
    \item there exists $g\in\Homeo(\Sigma)$ sending $E_\alpha$ to $E_\gamma$.
\end{enumerate}
If such curves exist then define $H:=f\circ (g\circ h\circ g^{-1})$.
The rest of the proof follows from the arguments explained in Section~\ref{SSEC:An-Illustrative-Example} around Equation (\ref{EQ:Illustrative-GEP}).

\emph{Proof of claim}. Part (1) follows from the fact that the supports of $f$ and $h$ are bounded away from the maximal end and $S$ and $S'$ are finite-type subsurfaces. If the closure of $E_\alpha$ in~$\Sigma$ is a compact subsurface, it cannot be non-displaceable. Hence, there exists $g \in \Homeo(\Sigma)$ such that $g(E_\alpha) \cap E_\alpha = \emptyset$ and $\gamma := g(\alpha)$ satisfies (2) and (3). If the closure of $E_\alpha$ in $\Sigma$ is not compact then the space of ends of $\overline{E_\alpha}$ is not empty. The existence of $\gamma$ satisfying~(2) and~(3) above is a consequence of the following two facts: (i) the space of ends of $\overline{E_\alpha}$ does not contain the maximal end $x_\infty$ and (ii) the space of ends of $\Sigma$ is self-similar. For this last point, see Proposition 4.8 in~\cite{mann-rafi}.

$\mathbf{(2)\Leftrightarrow(3)}$. Given that $(2)$ is contained in $(3)$, we only have to prove that $(2)\Rightarrow(3)$.  Let $\phi:B\to C$ be an isomorphism between finite substructures of the full curve graph of $\Sigma$ w.r.t.~$\MCG(\Sigma)$. In what follows, we show that there exists $h\in \Homeo^+(\Sigma)$ that induces\,\footnote{That is, if $B$ is formed by curves $\{\beta_1,\ldots,\beta_n\}$ then $h(\beta_i)=\phi(\beta_i)$ for all $i=1,\ldots,n$.}~$\phi$ and whose support is bounded away from the maximal end. The following lemma tells us that the real difficulty is when $B$ is a singleton.

\begin{lemma}
	\label{LEMMA:reduction_to_singletons}
 Let $B$ and $C$ be singletons and $\phi: B \to C$ an isomorphism.\footnote{Henceforth, all isomorphisms are between finite substructures  of the full curve graph of $\Sigma$ w.r.t.~$\MCG(\Sigma)$.} Then there exists a homeomorphism $g: \Sigma \to \Sigma$ that induces\,\footnote{In the context of this lemma, this means that $g(B)=C$.} $\phi$ and whose support is bounded away from the maximal end.
\end{lemma}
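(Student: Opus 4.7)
By the enriched signature of the full curve graph $C_{\MCG(\Sigma)}(\Sigma)$ (see Section~\ref{SSEC:Fraissefication}), the existence of the isomorphism $\phi$ forces $\gamma$ to lie in the $\MCG(\Sigma)$-orbit of $\beta$; fix a representative $f \in \Homeo^+(\Sigma)$ with $f(\beta) = \gamma$. The goal is to replace $f$ by a homeomorphism $g$ that still sends $\beta$ to $\gamma$, but whose support is contained in $\Sigma \setminus U$ for some subsurface $U$ with $U^\ast \ni x_\infty$. Since $x_\infty$ is the unique maximal end, it suffices to produce $g$ whose support lies in a compact connected subsurface with boundary $T$: the component of $\Sigma \setminus T$ containing $x_\infty$ is then the desired $U$.

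Concretely, the plan is to find a compact connected subsurface with boundary $T \subset \Sigma$ such that
\begin{enumerate}
\item[(i)] $\beta \cup \gamma \subset \operatorname{int}(T)$, and
\item[(ii)] $\beta$ and $\gamma$ lie in the same orbit of $\MCG(T, \partial T)$, the mapping class group of $T$ fixing $\partial T$ pointwise.
\end{enumerate}
Given such $T$, one picks a representative $g^\prime \in \Homeo^+(T)$ fixing $\partial T$ pointwise with $g^\prime(\beta) = \gamma$, and extends $g^\prime$ by the identity on $\Sigma \setminus T$ to obtain $g$.

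The main obstacle is condition (ii): two curves equivalent under $\MCG(\Sigma)$ need not be equivalent in an arbitrary finite-type subsurface containing them, because when $\beta$ and $\gamma$ are separating, the truncations of their complementary sides produced by $T$ must carry matching topological types, and the components of $\partial T$ must distribute symmetrically between the two sides. To arrange this, I would begin with any compact $T_0 \supset \beta \cup \gamma$ and then enlarge it by successively gluing finite-type subsurfaces of $\Sigma \setminus T_0$ along components of $\partial T_0$ until the truncated complementary pieces of $\beta$ and $\gamma$ in the resulting $T$ are pairwise homeomorphic, with the appropriate correspondence of boundary components. Here the hypotheses of the theorem intervene crucially: the absence of non-displaceable finite-type subsurfaces together with uniqueness of the maximal end $x_\infty$ forces $(E(\Sigma), E_\infty(\Sigma))$ to be self-similar around $x_\infty$ (cf.\ \cite[Proposition~4.8]{mann-rafi}), and this self-similarity supplies enough copies of any prescribed end structure inside arbitrary clopen pieces of $E(\Sigma)$ to execute the required matching. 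Once (ii) is achieved, the classification of compact orientable surfaces with boundary delivers $g^\prime$ and completes the argument.
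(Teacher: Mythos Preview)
Your strategy differs from the paper's, and the difference is worth noting. You aim for a homeomorphism with \emph{compact} support, by trapping both curves inside a compact subsurface $T$ and applying the change-of-coordinates principle for finite-type surfaces. The paper instead works directly with the decomposition $\Sigma = E_\alpha \cup (\overline{I_\alpha \setminus I_\delta}) \cup I_\delta$ induced by an auxiliary separating curve $\delta$ chosen near $x_\infty$: one uses Richards' classification to match the (possibly infinite-type) pieces $I_\alpha \setminus I_\delta$ and $I_{f(\alpha)} \setminus I_\delta$, and glues. The resulting $g$ has support in $E_\delta$, which is bounded away from $x_\infty$ but typically \emph{not} compact. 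So you are attempting to prove a strictly stronger conclusion than the lemma requires.

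That said, your argument has a genuine gap at the decisive step. Condition~(ii)---that $\beta$ and $\gamma$ lie in the same $\MCG(T,\partial T)$-orbit---demands that the two components of $T\setminus\beta$ match, as an ordered pair, the two components of $T\setminus\gamma$. When $\beta$ is separating with an infinite-type complementary region $E_\beta$, any compact $T$ replaces $E_\beta$ by a finite-type truncation whose topological type depends delicately on how many components of $\partial T$ lie in $E_\beta$, what genus is captured, and so on; the same applies to $\gamma$, and to the other sides. Your proposed ``enlarge $T_0$ until things match'' procedure is not a proof: you have not shown it terminates, nor that self-similarity alone (which is a statement about the end space, not about finite truncations of the surface) suffices to equalize both sides simultaneously---particularly in the nested case $E_\gamma \subsetneq E_\beta$ or when $i(\beta,\gamma)\neq 0$. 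The paper sidesteps this entirely by never insisting on compactness: it finds a single separating curve $\delta$ near $x_\infty$ and invokes the classification of infinite-type surfaces to produce the required homeomorphism $\overline{I_\alpha\setminus I_\delta}\to\overline{I_{f(\alpha)}\setminus I_\delta}$, treating several geometric configurations ($f(\alpha)=\alpha$, $i(\alpha,f(\alpha))\neq 0$, disjoint and non-nested, nested) separately. If you want to salvage your route, you must actually construct $T$ case by case with the same care.
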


We postpone the proof of this lemma to explain first how it is used to obtain the desired conclusion.

If $B$ is not a singleton, let $U \subset \Sigma$ be a subsurface such that $U^\ast$ is a closed neighborhood of the unique maximal end, $U$ has a unique boundary curve $\beta$ and every element of $B$ is an essential curve of $\Sigma \setminus U$. Since $\phi$ is an isomorphism and the full curve graph is an ultrahomogeneous structure w.r.t.~$\MCG(\Sigma)$, there exists a homeomorphism $f: \Sigma \to \Sigma$ that realizes $\phi$. Define $B^{\prime} = B \cup \{\beta\}$ and $\varphi: B^{\prime} \to C\cup\{f(\beta)\}$. By Lemma~\ref{LEMMA:reduction_to_singletons}, we know that there exists a homeomorphism $g:\Sigma \to \Sigma$ whose support is bounded away from the maximal end and that realizes $\varphi \restriction \{\beta\}$. Note that since $g(\beta) = \varphi(\beta) = f(\beta)$ and $U^\ast$ is a neighbourhood of the unique maximal end, we have that $g(\Sigma \setminus U) = f(\Sigma \setminus U)$. Define the following homeomorphism of $\Sigma$:
$$
 h(p)=
 \begin{cases}
 g(p) & p \in U\\
 f(p)    & p \in \Sigma \setminus U
 \end{cases}
$$
Since $h(\beta) = g(\beta) = f(\beta)$ and $h \restriction U = g \restriction U$, we have that the support of $h$ is bounded away from the unique maximal end. Also, since every element of $B$ is an essential curve of $\Sigma \setminus U$ and $h \restriction \Sigma \setminus U = f \restriction \Sigma \setminus U$, we have that $h(\gamma) = f(\gamma) = \phi(\gamma)$ for every $\gamma \in B$. Therefore $h: \Sigma \to \Sigma$ realizes $\phi$.

\emph{Proof of Lemma~\ref{LEMMA:reduction_to_singletons}}.
Note first that $B$ and $C$ either both correspond to a non-separating curve or both to a separating curve. In the case of non-separating curves, the classification of surfaces yields a homeomorphism $g \colon \Sigma \to \Sigma$ which sends the curve corresponding to $B$ to the curve corresponding to $C$ and whose support is a compact subsurface that contains these two curves.

In the case of separating curves, we use the following general principle. Let $f\in\Homeo^+(\Sigma)$ and suppose that for a given separating curve $\alpha$, one can find another separating curve $\beta$ such that~$ I_\alpha\setminus I_\beta$ is homeomorphic to $ I_{f(\alpha)}\setminus I_\beta$ (see Definition~\ref{DEF:Bounded-Away}). Then there exists a homeomorphism
$$
h:\overline{ I_\alpha\setminus I_\beta}\to\overline{ I_{f(\alpha)}\setminus I_\beta}
$$
such that $h \restriction \{\alpha\}=f \restriction \{\alpha\}$, $h \restriction \{\beta\}=Id \restriction \{\beta\}$ and hence
$$
g(z)=
\begin{cases}
f(z) & z\in E_\alpha \\
z    & z\in I_\beta\\
h(z) & z\in\overline{ I_\alpha\setminus I_\beta}
\end{cases}
$$
is a homeomorphism whose support is bounded away from the maximal end and such that $g(\alpha)=f(\alpha)$. Given that the full curve graph is an ultrahomogeneous structure w.r.t.~$\MCG(\Sigma)$,  Lemma~\ref{LEMMA:reduction_to_singletons} follows.

\begin{remark}
This principle already appeared in the illustrative example discussed
in Section~\ref{SSEC:An-Illustrative-Example}. See Equation~\eqref{EQ:homeo-property2} and the paragraph preceding it for details.
\end{remark}

Let henceforth $\alpha$ denote a separating curve and $f\in\Homeo(\Sigma)$. We proceed considering several cases. In some of them, we find the curve $\beta$ satisfying the principle mentioned above, in others, we see that the conclusion of Lemma~~\ref{LEMMA:reduction_to_singletons} follows.

\emph{Case: $f(\alpha)=\alpha$ modulo isotopy}. This is the simplest case: any separating curve~$\beta$ which is non-isotopic to $\alpha$ and such that $ I_\beta\subset I_\alpha$ satisfies that $ I_\alpha\setminus I_\beta$ is homeomorphic to $ I_{f(\alpha)}\setminus I_\beta$. Henceforth, we suppose that the isotopy class of $\alpha$ is not fixed by $f$.

\emph{Case: $i(f(\alpha), \alpha) \neq 0$}. Let $X$ and $Y$ be the spaces of ends of $I_\alpha$ and $I_{f(\alpha)}$ respectively. By definition, $X$ and $Y$ are homeomorphic. Define $Z :=E(\Sigma)\setminus (X\cup Y)$. Then $Z$ is a clopen (possibly empty) subset of $E(\Sigma)$ which does not contain the maximal end $x_\infty$. Given that $E(\Sigma)$ is self-similar, there exists a curve $\delta$ such that $Z'$, the space of ends of~$E_{\delta}$, is homeomophic to $Z$ and is contained in~$V :=X \cap Y$. Let $\beta$ be the separating curve for which the space of ends of $I_\beta$ is $W := V \setminus Z'$. We claim that $\beta$ is the curve we are looking for. Indeed, if $f^*\in\Homeo(E(\Sigma))$ denotes the homeomorphism induced by $f$, then:
\begin{align*}
 X \setminus W
 &\ = (X \setminus V) \cup Z'
 \simeq (X \setminus V) \cup Z
 = E(\Sigma) \setminus Y
 = f^\ast(E(\Sigma) \setminus X) \\
 &\ \simeq E(\Sigma) \setminus X
 = (Y \setminus V) \cup Z'
 \simeq (Y \setminus V) \cup Z
 = Y \setminus W.
\end{align*}

\emph{Case: $i(f(\alpha), \alpha) = 0$}. We consider two subcases. The simplest one is when $E_{f(\alpha)}\cap E_\alpha =\emptyset$. In this situation, we do not need to find a curve $\beta$ such that $ I_\alpha\setminus I_\beta$ is homeomorphic to $ I_{f(\alpha)}\setminus I_\beta$. Indeed, from Figure~\ref{Fig:simple_homeo} it is easy to see that one can find a homeomorphism $g$ such that $g(\alpha)=f(\alpha)$ and whose support is bounded away from the maximal end.

\begin{figure}[!ht]
\centering
\begin{tikzpicture}
  \draw (0,0) .. controls +(90:0.8cm) and +(-15:0.5cm) .. (-0.9,1)
  .. controls +(165:0.5cm) and +(10:0.5cm) .. (-3,1)
  .. controls +(-170:0.5cm) and +(90:0.4cm) .. (-4,0.4)
  .. controls +(-90:0.4cm) and +(170:0.3) .. (-3,0)
  .. controls +(-10:0.1cm) and +(90:0.1cm) .. (-2.8,-0.2)
  .. controls +(-90:0.1cm) and +(40:0.1cm) .. (-2.9,-0.4)
  .. controls +(-140:0.3cm) and +(80:0.3cm) .. (-3.8,-0.8)
  .. controls +(-100:0.5cm) and +(-170:0.6cm) .. (-2.7,-1.3)
  .. controls +(10:0.5cm) and +(-150:0.7cm) .. (-0.7,-0.8)
  .. controls +(30:0.4cm) and +(-90:0.4cm) .. (0,0);

  \fill (0,0) circle (2pt) node[right]{$x_\infty$};

  \draw[thick, color=blue] (-0.9,1) node[above]{$\beta$} .. controls +(-15:0.3cm) and +(30:0.3cm) .. (-0.7,-0.8);
  \draw[thick, dashed, color=blue] (-0.9,1) .. controls +(165:0.3cm) and +(-150:0.3cm) .. (-0.7,-0.8);

  \draw[thick] (-3,1) node[above]{$\alpha$} .. controls +(-15:0.3cm) and +(-10:0.3cm) .. (-3,0);
  \draw[thick, dashed] (-3,1) .. controls +(170:0.3cm) and +(-150:0.3cm) .. (-3,0);

  \draw[thick] (-2.9,-0.4) .. controls +(40:0.3cm) and +(30:0.3cm) .. (-2.7,-1.3) node[below]{$f(\alpha)$};
  \draw[thick, dashed] (-2.9,-0.4) .. controls +(-140:0.3cm) and +(-150:0.3cm) .. (-2.7,-1.3);
 \end{tikzpicture}
    \caption{}
	\label{Fig:simple_homeo}
\end{figure}

Let us now consider the case when $E_{f(\alpha)} \subset E_\alpha$ (the proof for the case when the reverse inclusion holds is analogous). Once more, we do not seek to find a separating curve $\beta$ such that $ I_\alpha\setminus I_\beta$ is homeomorphic to $ I_{f(\alpha)}\setminus I_\beta$. Instead, we prove that the homeomorphism $g$ satisfying the conclusion of the lemma can be found.

Given that there are no non-displaceable finite-type subsurfaces and a unique maximal end, Proposition 4.8 in~\cite{mann-rafi} tells us that the space of ends of $\Sigma$ is self-similar. Therefore, we can find an essential separating curve $\tilde{\alpha}$ in $I_{\alpha}$ such that $E_\alpha$ is homeomorphic to $E_{\tilde{\alpha}}$. Let $a$ be a simple arc in $\Sigma$ joining $\alpha$ with $\tilde{\alpha}$ and define $\beta$ to be the boundary component in $I_\alpha$ of a closed regular neighborhood of $\alpha\cup a \cup \tilde{\alpha}$, see Figure~\ref{Fig:Construction}.

\begin{figure}[!ht]
\begin{center}
    \begin{center}
 \begin{tikzpicture}[scale=0.6]
  \draw (0,0) circle (2.5cm) +(45:2.5) node[right]{$\alpha$};
  \draw (10,0) circle (2.5cm) +(45:2.5) node[right]{$\tilde{\alpha}$};

  \draw[color=magenta] (2.5,0) -- node[above]{$a$} (7.5,0);

  \draw[color=red] (5,1) .. controls +(0:1.7cm) and +(-135:0.5cm) .. (7.525,2.475)
  arc (135:-135:3.5cm)
  .. controls +(135:0.5cm) and +(0:1.7cm) ..
  (5,-1) node[below]{$\beta$} .. controls +(180:1.7cm) and +(45:0.5cm) .. (-45:3.5cm)
  arc (-45:-315:3.5cm)
  .. controls +(-45:0.5cm) and +(180:1.7cm) .. (5,1);

  \fill (5,3) node[right]{$x_\infty$} circle (0.1cm);
 \end{tikzpicture}
\end{center}
    \caption{}
	\label{Fig:Construction}
\end{center}
\end{figure}

By the Classification Theorem of infinite-type surfaces, there exists a homeomorphism $h:E_\beta\setminus E_\alpha\to E_\beta\setminus E_{f(\alpha)}$ such that $f\restriction\alpha=h\restriction\alpha$ and $h\restriction\beta = Id\restriction\beta$. Then
\begin{equation}
       g(z)=
       \begin{cases}
       z  & z \in I_\beta \\
       h(z)  & z \in \overline{E_\beta\setminus E_\alpha} \\
       f(z)     & z \in E_\alpha
       \end{cases}
\end{equation}
is a homeomorphism with support bounded away from the maximal end and which coincides with $f$ on $\alpha$.

$\mathbf{(1)\Rightarrow(2)}$. First, suppose that $\Sigma$ contains a finite-type non-displaceable subsurface $S$. Then by Corollary~\ref{Coro:MapNotJEP}, we have that $\MCG(\Sigma)$ does not have a dense conjugacy class. We now suppose that $\MCG(\Sigma)$ has a dense conjugacy class and show that there exists a unique maximal end $x_\infty\in E(\Sigma)$. The rest of the proof is divided into two steps:

\underline{Step 1}: \emph{All maximal ends of $\Sigma$ are equivalent}. Indeed, if this was not the case, let $E(x_1)$ and~$E(x_2)$ be two different equivalence classes for maximal ends $x_1,x_2\in E(\Sigma)$.
By Proposition~\ref{prop:topology_maximal_elements}, each $E(x_i)$ is either finite or a Cantor set.
If both $E(x_1)$ and $E(x_2)$ are finite then
the set of mapping classes fixing each point in $E(x_1)\cup E(x_2)$ defines a closed normal subgroup of $\MCG(\Sigma)$ of (finite) index strictly bigger than 1. By Proposition~\ref{normal}, we get a contradiction. The remaining possibility is that at least one of $E(x_1)$ or~$E(x_2)$ is a Cantor set. In this case, it is rather easy to find a non-displaceable pair of pants in $\Sigma$, see Example~2.5 in~\cite{mann-rafi}. By Corollary~\ref{Coro:MapNotJEP}, we obtain a contradiction.

Before we show now in the second step that the unique equivalence class of maximal ends is a singleton, we first prove a lemma that is used to rule out any other possibility.

\begin{lemma}\label{splitting-cantor}
    Let $\Sigma$ be an infinite-type surface without non-displaceable subsurfaces.
	Suppose there exists a maximal end $x \in E(\Sigma)$ such that its equivalence class $E(x)$ coincides with the set of maximal ends and is homeomorphic to the Cantor set. Then, for every $n \in \N$, there exists a partition into disjoint clopen subsets $E(\Sigma) = E_1 \sqcup \cdots \sqcup E_n$, such that $E_i \cong E_j$ for all $i,j \in \{1,\ldots,n\}$.
\end{lemma}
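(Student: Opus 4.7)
The overall plan is to first partition the Cantor set $E(x)$ of maximal ends into $n$ clopen Cantor pieces, then extend this partition to the whole space $E(\Sigma)$, and finally use the hypotheses (absence of non-displaceable finite-type subsurfaces, and the fact that all maximal ends lie in the single equivalence class $E(x) \cong 2^\omega$) to ensure the pieces are pairwise homeomorphic.

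The first step is purely topological: since $E(x)$ is homeomorphic to the Cantor set, it can be written as a disjoint union $C_1 \sqcup \cdots \sqcup C_n$ of clopen subsets, each itself homeomorphic to the Cantor set. By Proposition~\ref{p:1.2}, $E(\Sigma)$ is zero-dimensional and compact, hence the clopen partition $\{C_i\}$ of $E(x)$ extends to a clopen partition $E(\Sigma) = U_1 \sqcup \cdots \sqcup U_n$ with $U_i \cap E(x) = C_i$. The remaining task is to redistribute the non-maximal ends among the $U_i$ so that $U_i$ and $U_j$ are homeomorphic as nested pairs $(U_i, U_i \cap E_\infty(\Sigma))$; by Theorem~\ref{t:1.1}, once this holds I recover the desired homeomorphisms $E_i \cong E_j$.

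For the redistribution, I would exploit two features of the setup. First, every non-maximal end $y$ satisfies $y \preccurlyeq c$ for every maximal $c \in E(x)$, and combined with the density of $E(x)$ in itself (no isolated points in a Cantor set) this forces every similarity type of end realized inside some $U_i$ to also be realized inside every $U_j$. Second, the hypothesis that no finite-type subsurface is non-displaceable lets me displace finite-type pieces of $\Sigma$ freely, so no global obstruction prevents matching up the non-maximal ends of each type. I would formalise this by a back-and-forth argument on finer and finer clopen refinements of the partition of $E(x)$, at each stage matching the traces of non-maximal ends of each similarity type between the pieces, and taking a limit to produce the nested-pair homeomorphisms $(U_i, U_i \cap E_\infty(\Sigma)) \cong (U_j, U_j \cap E_\infty(\Sigma))$.

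The main obstacle is the last step: running the back-and-forth rigorously requires careful bookkeeping of the similarity types of non-maximal ends (which may form a rich partial order strictly below $E(x)$) and of the accumulation of genus near ends in $E_\infty(\Sigma) \cap U_i$. The key technical input I expect to need is a local matching statement: under our hypotheses, for any two maximal ends $c,c' \in E(x)$ and any clopen neighborhoods $V \ni c$, $W \ni c'$, there exist smaller clopen neighborhoods $V' \subset V$ and $W' \subset W$ of $c$ and $c'$ such that $(V', V' \cap E_\infty(\Sigma))$ and $(W', W' \cap E_\infty(\Sigma))$ are homeomorphic as nested pairs. This local matching, fed into the Cantor structure of $E(x)$, should assemble into the required global homeomorphisms and hence close the argument.
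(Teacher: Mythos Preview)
Your local matching statement is exactly the right key input: it is the stable-neighbourhood property from Mann--Rafi (their Remark~4.15 and Lemma~4.17), and the paper invokes precisely this. So you have correctly located the engine of the proof.

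The gap is in the global step. A back-and-forth on refinements of $E(x)$ builds a homeomorphism between the Cantor sets $C_i$ and $C_j$, but it does not by itself control the non-maximal ends that lie \emph{away} from $E(x)$. When you extend the partition $\{C_i\}$ of $E(x)$ to a clopen partition $\{U_i\}$ of $E(\Sigma)$, each $U_i$ may contain a clopen ``residue'' of non-maximal ends not covered by any small stable neighbourhood of a point of $C_i$; your back-and-forth never touches this residue, and the sentence ``no global obstruction prevents matching up the non-maximal ends of each type'' is where the argument is doing real work without saying how.

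The paper handles this differently and more concretely, in three moves. First, cover each $C_i$ by the \emph{same number} $m$ of disjoint stable clopen neighbourhoods; since stable neighbourhoods of equivalent maximal ends are pairwise homeomorphic, the unions $X_i$ of these neighbourhoods are already pairwise homeomorphic, and one obtains $E(\Sigma)=X_1\sqcup\cdots\sqcup X_n\sqcup Z$ with $Z$ a clopen set containing no maximal end. Second, use the no-non-displaceable-subsurface hypothesis in a specific way: separate $Z$ and a cover of $E(x)$ by stable neighbourhoods with a finite-type subsurface $K$, displace $K$ off itself, and conclude that a homeomorphic copy of $Z$ sits inside every clopen neighbourhood of every maximal end. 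Third, run a Hilbert-hotel (shift) argument: find infinitely many disjoint copies $Z_1,Z_2,\ldots$ of $Z$ inside $X_1$ converging to a maximal end, and use the shift $\bigsqcup_{n\ge 1}Z_n\cong\bigsqcup_{n\ge 2}Z_n$ to absorb the extra copy, giving $X_1\sqcup Z\cong X_1$. This is where the displaceability hypothesis actually bites, and it replaces your unspecified back-and-forth with an explicit absorption of the residue.
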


\begin{proof}
	\textit{(1) Local splitting.}
	Note that any end $y \in E(x)$ has a stable neighborhood in the sense of \cite{mann-rafi} (see Remark 4.15 there). From Lemma 4.17 in \cite{mann-rafi}, it follows that for two given ends $y,z \in E(x)$, any two sufficiently small neighborhoods $V_y, V_z \subset E(\Sigma)$ are homeomorphic.
	As $E(x)$ is homeomorphic to a Cantor set, we can split $E(x) = E_1(x) \sqcup \cdots \sqcup E_n(x)$ where the~$E_i(x)$ are Cantor sets homeomorphic to $E(x)$. For a fixed $m \in \mathbb{N}$ big enough, we can cover each of the Cantor sets by disjoint stable clopen neighborhoods $E_j(x) \subset \bigsqcup_{s=1}^m V_{j,s} = X_j$ (this can be done by placing $m$ equally distributed points in each Cantor set with stable neighborhoods around them that can be made disjoint). Thus, we get a splitting into clopen subsets $E(\Sigma) = X_1 \sqcup \cdots \sqcup X_n \sqcup Z$, with $X_1 \cong X_j$ (because each of them is covered by the same amount of disjoint stable neighborhoods) and~$Z$ a clopen subset not containing maximal ends.
	
	\textit{(2) Absorbing non-maximal sets.} Given $U \subset E(\Sigma)$ a clopen subset containing $x \in E(x)$, consider a covering $E(x) \subset \bigsqcup_{j = 1}^l W_j$ with every $W_j$ being a clopen subset that embeds homeomorphically into~$U$ ($W_j$ can be taken the stable neighborhoods $V_{j,s}$ considered above). There exists a compact, connected subsurface $K \subset \Sigma$ such that the closed subsets $\{ W_1, ... , W_l, Z\}$ are contained in the spaces of ends of different connected components of $\Sigma \setminus K$. As~$\Sigma$ does not admit non-displaceable surfaces, there is a homeomorphism $f \in \Homeo(\Sigma)$ such that $f(K) \cap K = \emptyset$. By analyzing the connected components of $\Sigma \setminus K$ and the decomposition they induce on the space of ends, we see that the homeomorphism $f^*$ cannot send a clopen $W_j$ to the subspace containing $Z$, because the corresponding component does not have maximal ends. So what we have is that~$f^\ast(Z)$ lies inside one of the $W_j$ and in particular, $U$ contains a homeomorphic copy of~$Z$.
	
	\textit{(3) Global splitting.} Without loss of generality, suppose $x \in E_1(x) \subset X_1$ and consider~$\{U_n\}$ to be a countable local basis of clopen neighborhoods of $x$, such that $U_{n+1} \subset U_n \subset X_1$. Define $Z_1 = Z$ and recursively choose an element $U_{k_n}$ of the neighborhood basis which does not intersect $Z_n$ (this exists as $Z_n$ does not contain maximal elements) and then $Z_{n+1} \subset U_{k_n}$ a homeomorphic copy of~$Z$ inside $U_{k_n}$. As the elements of the sequence~$\{Z_n\}$ are pairwise disjoint, and the sequence Hausdorff-converges to $\{x\}$, we have a shift homeomorphism $\bigsqcup_{j=1}^\infty Z_n \cong \bigsqcup_{j=2}^\infty Z_n$ (see Observation 4.9 in \cite{mann-rafi}), which extends to the homeomorphism $ (X_1 \sqcup Z) \cong X_1$. So the splitting that we were looking for is $E_1 = (X_1 \sqcup Z)$, and $E_j = X_j$, for $j > 1$.
\end{proof}

\underline{Step 2}: Let $E(x)$ be the unique equivalence class that contains maximal ends.
Again, $E(x)$ is either finite or a Cantor set. If $E(x)$ is finite then it has only one element (if not, we can use Proposition~\ref{normal} again to get a contradiction as before), which is precisely what we wanted to show. Now suppose that $E(x)$ is Cantor set.
Consider the partition of the space of ends into three homeomorphic copies $E(\Sigma) = E_0 \sqcup E_1 \sqcup E_2$ given by Lemma~\ref{splitting-cantor}. Then there is a pair of pants $P \subset \Sigma$ such that:
\begin{equation*}
 \Sigma\setminus P = \Sigma_0\sqcup\Sigma_1\sqcup\Sigma_2, \qquad E_i \subset E(\Sigma_i).
\end{equation*}
As the three components of $\Sigma\setminus P$ are homeomorphic, there is an $R\in\MCG(\Sigma)$ leaving $P$ invariant and sending $\Sigma_i$ to $\Sigma_{i+1}$, for all $i\in\Z/3\Z$.

Consider now the pairs $(R,P)$ and $(Id_{\Sigma},P)$. We claim that these cannot be jointly embedded, and hence by Lemma~\ref{lemma:GEP}, we obtain a contradiction to the fact that $\MCG(\Sigma)$ has a dense conjugacy class. Suppose that there exist $g,g',H\in\Homeo^+(\Sigma)$ such that:
\begin{itemize}
\item [(i)] $g \circ R\restriction P = H \circ g\restriction P$ and
\item [(ii)] $g' \circ Id \restriction P = H \circ g'\restriction P$.
\end{itemize}
This is equivalent to $g \circ R \circ g^{-1} \restriction g(P) = H \restriction g(P)$ and $Id \restriction g'(P) = H \restriction g'(P)$.
Note that $g \circ R \circ g^{-1}$ permutes the connected components of $\Sigma \setminus g(P)$.
If $g'(P) \cap g(P) \neq \emptyset$, then $H$ acts at the same time as the identity and as a non-trivial rotation in $g'(P) \cap g(P)$, which is impossible. If $g'(P) \cap g(P) = \emptyset$, then $g'(P) \subset g(\Sigma_j)$ for some $j \in \{1,2,3\}$, but this is also impossible since, on the one hand, $H$ acts as the identity on $g'(P)$ and on the other hand, $H(g'(P)) \subset H(g(\Sigma_j)) = g(\Sigma_{j+1})$ with $g(\Sigma_{j+1}) \cap g(\Sigma_j) = \emptyset$. Hence the only possibility left is that $E(x)$ is a singleton as desired.

\section{Nowhere dense and somewhere dense conjugacy classes}
    \label{SEC:Nowhere-Dense-Conjugacy-Classes}

In this section, we prove a generalized version of Theorem~\ref{THM:non-displaceable-the-nowhere-dense-versionintro} on nowhere dense conjugacy classes and Theorem~\ref{thm:Somewhere-dense} on somewhere dense conjugacy classes.
We then apply the results of the former to prove Theorem~\ref{THM:CC_for_PMap}.

\subsection{Nowhere dense conjugacy classes}
In this subsection, we study the conditions under which conjugacy classes of closed subgroups of a big mapping class group have \emph{nowhere dense} conjugacy classes. In what follows, $\Gamma$ is a closed subgroup of $\MCG(\Sigma)$ that contains, for each essential simple closed curve $\alpha\in\Sigma$, a non-trivial power of the Dehn twist $\tau_\alpha$.
Examples of such subgroups are $\MCG(\Sigma), \pmap{\Sigma}$ and $\overline{\pmapc{\Sigma}}$. Recall that:
\begin{itemize}
    \item for each curve $\alpha$ on $\Gamma$, the number $N_{\alpha}$ denotes the minimal positive power of the Dehn twist $\tau_{\alpha}$ that is contained in $\Gamma$,
    \item if $\alpha$ and $\beta$ are in the same $\Gamma$-orbit, then $N_{\alpha} = N_{\beta}$, and
    \item we say that a multicurve $M$ cannot be separated from itself by $\Gamma$ if for all $g,g' \in \Gamma$, there exist $\alpha \in g(M)$ and $\beta \in g'(M)$ such that either $\alpha = \beta$ or $i(\alpha, \beta) \neq 0$.
\end{itemize}

We also define a generalized notion of non-displaceable subsurfaces for this subsection: We say that a connected subsurface $S \subset \Sigma$ is \emph{$\Gamma$-non-displaceable} if for every representative~$f$ of a mapping class in $\Gamma$, we have that $f(S)\cap S\neq\emptyset$.

The main result that we prove in this subsection is the following:

\begin{theorem}
    \label{THM:non-displaceable-the-nowhere-dense}
 If $\Sigma$ has a $\Gamma$-non-displaceable finite-type subsurface $S$ and $W \subset \Gamma$ is a non-empty open set, then there exist $V_{1}, V_{2} \subset W$ non-empty and disjoint open sets satisfying that any conjugacy class with non-empty intersection with $V_{i}$, is disjoint from $V_{j}$, for $i \neq j$. In particular, all conjugacy classes of elements of $\Gamma$ are nowhere dense in $\Gamma$.
\end{theorem}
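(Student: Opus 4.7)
The plan is to reduce to a basic open set in the permutation topology and then isolate inside it two disjoint cosets of a stabilizer on which a conjugation would have to match two different local Dehn twist powers, contradicting the $\Gamma$-non-displaceability of $S$ in the spirit of Theorem~\ref{Theo:NotJEP} and the Dehn twist trick (Lemma~\ref{DehnTwistTrick}). Once the main statement is established, every conjugacy class is nowhere dense: if the conjugacy class $C$ of $g\in\Gamma$ were somewhere dense, pick a non-empty open $U\subseteq \overline{C}$ and apply the statement with $W=U$; density of $C$ in $U$ forces $C\cap V_1\neq\emptyset$ and $C\cap V_2\neq\emptyset$, contradicting the defining property of $V_1,V_2$.

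For the main statement, pick $f_0\in W$ and a finite multicurve $A$ with $f_0 U_A\subseteq W$. Enlarge the given $\Gamma$-non-displaceable finite-type subsurface $S$ to a finite-type $\Gamma$-non-displaceable $S^{\ast}\supseteq S$ absorbing a closed regular neighbourhood of pairwise disjoint representatives of $A$, so that $M:=\partial S^{\ast}$ consists of essential simple closed curves disjoint (as isotopy classes) from every curve in $A$. By absorbing one extra pair of pants into $S^{\ast}$ if necessary, I fix an essential simple closed curve $\gamma\subset S^{\ast}$ with $i(\gamma,\alpha)\neq 0$ for some component $\alpha$ of $M$. Set $B:=A\cup M\cup\{\gamma\}$ and define
\[
V_1 \;:=\; f_0\,\tau_M^{\,2}\,U_B, \qquad V_2 \;:=\; f_0\,\tau_M^{\,3}\,U_B,
\]
where $\tau_M\in\Gamma$ is the multitwist from Definition~\ref{Def:minpowerandmultitwist}. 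Since $M$ is disjoint from $A$, we have $\tau_M^{k}\in U_A$, hence $V_1,V_2\subseteq f_0 U_A\subseteq W$; the two cosets of $U_B$ are disjoint because $\tau_M(\gamma)\neq\gamma$ forces $\tau_M\notin U_B$.

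The core step is showing that no conjugacy class meets both $V_1$ and $V_2$. Assume for contradiction that $\phi g_1 \phi^{-1}=g_2$ with $g_i=f_0\,\tau_M^{\,i+1}\,u_i$ and $u_i\in U_B$. Since $B$ contains an Alexander system for $S^{\ast}$, I may choose representatives of $u_1,u_2$ that are the identity on $S^{\ast}$. Rewriting the conjugation identity using $\phi\tau_M\phi^{-1}=\tau_{\phi(M)}$ as
\[
(\phi f_0\phi^{-1})\,\tau_{\phi(M)}^{\,2}\,(\phi u_1\phi^{-1})\;=\;f_0\,\tau_M^{\,3}\,u_2,
\]
I would then run the case analysis of Theorem~\ref{Theo:NotJEP}: by $\Gamma$-non-displaceability, $\phi(S^{\ast})\cap S^{\ast}\neq\emptyset$, so either some component of $\phi(M)$ has non-zero intersection with some component of $M$ (Case 1), or $\phi(M)$ sits as an essential multicurve inside $S^{\ast}$ (Case 2, using enough complexity in $S^{\ast}$). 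Evaluating the displayed identity on the test curve $\gamma$ and on components of $M$ then pits a $2$-twist supported near $\phi(M)$ against a $3$-twist supported near $M$, producing a contradiction via the same mechanism as the Dehn twist trick.

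\textbf{Main obstacle.} The routine parts are Steps checking $V_i\subseteq W$ and $V_1\cap V_2=\emptyset$. The substantive step is the non-conjugacy argument, because the correction terms $\phi,u_1,u_2$ are a priori uncontrolled off of $B$. The plan to overcome this is to exploit that $B$ contains an Alexander system for $S^{\ast}$, so that the relevant information is concentrated on $M$ and $\gamma$, and then to mimic verbatim the Case 1 / Case 2 geometric dichotomy already used in the proof of Theorem~\ref{Theo:NotJEP}; the mismatch between the exponents $2$ and $3$ in the two multitwists provides the contradiction exactly as it did for failure of \textbf{JEP}.
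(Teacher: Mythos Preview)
Your overall architecture---pick a basic open set $f_0U_A$, compose $f_0$ with two different multitwist powers, and read off a contradiction from non-displaceability---is exactly the paper's strategy. But the proposal skips the two technical lemmas the paper proves first (Lemmas~\ref{lemma:DehnTwistTrickNWD} and~\ref{lemma:nondisp-nolocalGEP}), and both omissions create real gaps.

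\medskip
\textbf{First gap: $M=\partial S^{\ast}$ need not be ``non-separable from itself''.} Non-displaceability of $S^{\ast}$ only gives $\phi(S^{\ast})\cap S^{\ast}\neq\emptyset$; it does \emph{not} force any component of $\phi(M)$ to coincide with or intersect a component of $M$. For instance $\phi(S^{\ast})$ may sit properly inside $\operatorname{int}(S^{\ast})$, so that $\phi(M)$ is disjoint from $M$ and your Case~1 never arises. The paper repairs this (Lemma~\ref{lemma:nondisp-nolocalGEP}) by taking $M=\partial S\cup P$ with $P$ a pants decomposition of $S\setminus\widetilde K$ and a complexity hypothesis on the pieces, which forces curves of $g(M)$ and $g'(M)$ to meet for all $g,g'\in\Gamma$. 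Your single test curve $\gamma$ does not substitute for this.

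\medskip
\textbf{Second gap: fixed exponents $2,3$ are not conjugation-robust.} Even when some $\alpha\in M$ satisfies $\phi(\alpha)=\beta\in M$, your displayed identity does not pit $\tau^2$ against $\tau^3$ cleanly, because the unknown $f_0$ (and $\phi f_0\phi^{-1}$) already twist around $M$. Using the conjugation-invariant twist number $n(\cdot,\cdot)$ of Lemma~\ref{lemma:DehnTwistTrickNWD}, your $g_i$ have $n(\alpha,g_1)=n(\alpha,f_0)+2N_\alpha$ and $n(\beta,g_2)=n(\beta,f_0)+3N_\beta$; the conjugacy $\phi g_1\phi^{-1}=g_2$ then yields only
\[
n(\alpha,f_0)-n(\beta,f_0)=N_\alpha,
\]
which is no contradiction when $\alpha\neq\beta$. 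The paper avoids this by choosing the exponents $k_\alpha,k'_\alpha$ \emph{per curve}, depending on $n(\alpha,f_0)$, so that $k_\alpha N_\alpha+n(\alpha,f_0)<N_\alpha<k'_\alpha N_\alpha+n(\alpha,f_0)$ for every $\alpha\in M$; this inequality is what survives conjugation and yields the contradiction in both the ``equal curve'' and ``intersecting curve'' cases. (A side issue: your claim that $B=A\cup M\cup\{\gamma\}$ contains an Alexander system for $S^{\ast}$ is unjustified, and you also need $u_i$ trivial on a two-sided neighbourhood of $M$, not just on $S^{\ast}$; both are fixable by enlarging $B$, but the two points above are not.)
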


For the proof of this theorem, we need some technical lemmas.

\begin{lemma}\label{lemma:DehnTwistTrickNWD}
 Let $K$ be a compact subsurface of $\Sigma$, $f$ a homeomorphism of $\Sigma$ and $M$ be a multicurve of $\Sigma \setminus K$ that cannot be separated from itself by $\Gamma$. If $S$ is a connected subsurface of~$\Sigma$ (not necessarily of finite type) that contains $K$ and contains every element of $M$ as an essential or a boundary curve, then there exist two homeomorphisms $h, h'$ of $\Sigma$ such that:
 \begin{enumerate}
  \item $h \restriction K = h' \restriction K = f \restriction K$ up to isotopy.
  \item There do not exist $g,g',H \in \Gamma$ with $g \circ h \circ g^{-1} \restriction g(S) = H \restriction g(S)$ and $g' \circ h' \circ (g')^{-1} \restriction g'(S) = H \restriction g'(S)$ up to isotopy.
 \end{enumerate}
\end{lemma}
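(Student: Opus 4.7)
The plan is to set $h = f\circ \tau_M^{2}$ and $h' = f\circ \tau_M^{3}$, where $\tau_M \in \Gamma$ is the canonical multi-twist of Definition~\ref{Def:minpowerandmultitwist}. Part (1) will follow immediately: since $M\subset \Sigma\setminus K$ and $K$ is compact, a tubular neighborhood of $M$ containing the support of $\tau_M$ can be chosen disjoint from $K$, so $\tau_M$ restricts to the identity on $K$, and hence $h\restriction K = h'\restriction K = f\restriction K$ up to isotopy.

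For part (2), I would argue by contradiction. Suppose $g, g', H \in \Gamma$ satisfy the two conjugacy relations. Unfolding and using that $\Gamma$-conjugation preserves the weights $N_\alpha$, one sees that $H$ is isotopic to $(gfg^{-1})\circ \tau_{g(M)}^{2}$ on $g(S)$ and to $(g'f(g')^{-1})\circ \tau_{g'(M)}^{3}$ on $g'(S)$. By the hypothesis that $M$ cannot be separated from itself by $\Gamma$, there exist $\alpha_0,\beta_0\in M$ with either $g(\alpha_0)=g'(\beta_0)=:\gamma$, or $i(g(\alpha_0),g'(\beta_0))\neq 0$. In the coincidence case, $\gamma$ lies in both $g(S)$ and $g'(S)$, and comparing the two local descriptions of $H$ on a neighborhood of $\gamma$ forces the relation $\tau_\gamma^{-N_{\alpha_0}} = \mathrm{id}$ near $\gamma$, which is absurd. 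In the transverse case, comparing the two descriptions at an intersection point of $g(\alpha_0)$ and $g'(\beta_0)$ forces Dehn twists about two distinct intersecting curves to coincide locally, again impossible. This mirrors the Dehn twist trick (Lemma~\ref{DehnTwistTrick}).

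The principal obstacle is that, unlike in Lemma~\ref{DehnTwistTrick} where $h,h'$ are pure multi-twists, the two local descriptions of $H$ carry the extra conjugating factors $gfg^{-1}$ and $g'f(g')^{-1}$, which could in principle absorb the Dehn twist discrepancy. The strategy I would use to neutralize these factors is to replace $f$, before forming $h$ and $h'$, by a homeomorphism $\tilde f$ of $\Sigma$ with $\tilde f\restriction K = f\restriction K$ up to isotopy and $\tilde f \equiv \mathrm{id}$ on a neighborhood $U$ of $M$. Such an $\tilde f$ is obtained by composing $f$ with a homeomorphism of $\Sigma$ supported in $\Sigma\setminus K$ which realizes $f^{-1}$ on $U$, the neighborhood $U$ being chosen small enough that $U\cap f(K)=\emptyset$ (possible by the compactness of $f(K)$ and the local finiteness of $M$). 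With $\tilde f$ in place of $f$, the factors $g\tilde f g^{-1}$ and $g'\tilde f(g')^{-1}$ become the identity on neighborhoods of $g(M)$ and $g'(M)$ respectively, so the local analysis near $\gamma$ (coincidence case) or near an intersection point (transverse case) reduces verbatim to the contradiction produced in Lemma~\ref{DehnTwistTrick}.
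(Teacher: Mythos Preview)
Your overall shape is right: take $h=f\circ(\text{multitwist along }M)$ with two different exponents, get part~(1) for free because $M\subset\Sigma\setminus K$, and derive a contradiction \`a la Lemma~\ref{DehnTwistTrick}. You also correctly isolate the real difficulty, namely that the conjugated factors $gfg^{-1}$ and $g'f(g')^{-1}$ appear in the local description of $H$ and could absorb the discrepancy between the two twist powers.

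The gap is in your proposed fix. You claim one can replace $f$ by $\tilde f$ with $\tilde f\restriction K=f\restriction K$ and $\tilde f\equiv\mathrm{id}$ on a tubular neighborhood $U$ of $M$, obtained as $\tilde f=f\circ\psi$ with $\psi$ supported in $\Sigma\setminus K$ and $\psi\restriction U=f^{-1}\restriction U$. Two things go wrong here. First, the preliminary condition $U\cap f(K)=\emptyset$ need not be achievable: $f$ is an arbitrary homeomorphism, so curves of $M$ may have essential (hence non-removable) intersection with $f(K)$. Second, even when $U\cap f(K)=\emptyset$, the homeomorphism $\psi$ need not exist. You are asking for a homeomorphism of $\Sigma$ that is the identity on $K$ and carries $U$ onto $f^{-1}(U)$; this forces a homeomorphism $\Sigma\setminus(K\cup U)\to\Sigma\setminus(K\cup f^{-1}(U))$ matching the prescribed boundary data, and there is no reason these complements are even abstractly homeomorphic. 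Concretely, if some $\alpha\in M$ separates $\Sigma$ and $f(K)$ lies on the opposite side of $\alpha$ from $K$, then any $\tilde f$ fixing $\alpha$ and agreeing with $f$ on $K$ would have to interchange the two complementary components of $\alpha$, which is impossible when they are not homeomorphic. Nothing in the hypotheses of the lemma rules this out.

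The paper circumvents this entirely: instead of trying to trivialize $f$ near $M$, it \emph{measures} $f$ near $M$. For each $\alpha\in M$ one writes $f\restriction N(\alpha)=\tau_{f(\alpha)}^{\,n(\alpha,f)}\circ b_{\alpha,f}$ with $b_{\alpha,f}$ the ``untwisted'' annulus homeomorphism, and observes that the integer $n(\alpha,f)$ is invariant under conjugation by elements of $\Gamma$. One then chooses, curve by curve, exponents $k_\alpha,k'_\alpha$ so that the resulting total twist amounts $k_\alpha N_\alpha+n(\alpha,f)$ and $k'_\alpha N_\alpha+n(\alpha,f)$ lie on opposite sides of $N_\alpha$. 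The contradiction in both the coincidence case and the transverse case then follows from comparing these twist amounts, with no modification of $f$ required. Your fixed global exponents $2$ and $3$ would not work against an unmodified $f$, because $n(\alpha,f)$ may vary with $\alpha$.
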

\begin{proof}
 For each $\alpha \in M$, let $b_{\alpha,f}: \overline{N(\alpha)} \to \overline{N(f(\alpha))}$ be the homeomorphism from the closed annulus $\overline{N(\alpha)}$ to the annulus $\overline{N(f(\alpha))}$ as defined in Figure \ref{figure:balpha}. Note that, when we fix the boundary, this means that $b_{\alpha,f}$ does not twist the annulus.

 \begin{figure}[ht]
     \begin{center}
      \begin{tikzpicture}
      \draw (0,0) circle (0.5cm);
      \draw (0,0) circle (1.5cm);
      \draw[thick, color=purple] (0,0) circle (1cm);
      \draw[color=purple] (1.1,-0.5) node{$\alpha$};
      \draw[thick, color=green] (-1.5,0) -- (-0.5,0);
      \draw (0,2) node{$\overline{N(\alpha)}$};

      \draw[->] (2,0) -- node[above]{$b_{\alpha,f}$} (3,0);

      \draw (5,0) circle (0.5cm);
      \draw (5,0) circle (1.5cm);
      \draw[thick, color=blue] (5,0) circle (1cm);
      \draw[color=blue] (7,-0.5) node{$f(\alpha)$};
      \draw[thick, color=green] (3.5,0) -- (4.5,0);
      \draw (5,2) node{$\overline{N(f(\alpha))}$};
      \end{tikzpicture}
     \end{center}

   \caption{In pink $\alpha$, in blue $f(\alpha)$, in green untwisted transversal arcs in both $N(\alpha)$ and $N(f(\alpha))$. Then, $b_{\alpha,f}$ maps one green transversal to the other. }
   \label{figure:balpha}
 \end{figure}
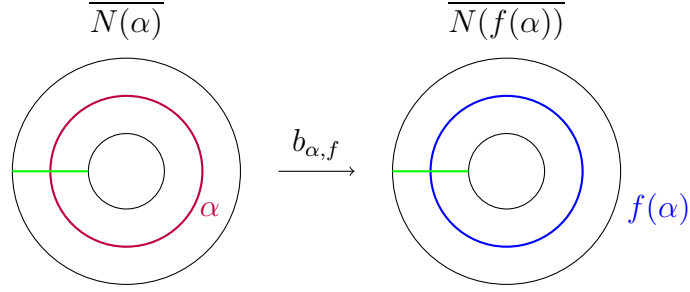

 Then there exists a unique $n(\alpha,f) \in \mathbb{Z}$ such that $f \restriction N(\alpha) = \tau_{f(\alpha)}^{n(\alpha,f)} \circ b_{\alpha,f}$.
 Note that $n(\alpha,f) = n(g(\alpha),g\circ f \circ g^{-1})$ for every $g \in \Gamma$.

 For each $\alpha \in M$, we define $k_{\alpha}$ and $k'_{\alpha}$ so that they satisfy $k'_{\alpha} \cdot N_{\alpha} + n(\alpha,f) > N_{\alpha} > k_{\alpha} \cdot N_{\alpha} + n(\alpha,f)$.

 We define $\displaystyle h := \left(\prod_{\alpha \in M} \tau_{f(\alpha)}^{k_{\alpha}\cdot N_{\alpha}}\right) \circ f = f \circ \left(\prod_{\alpha \in M} \tau_{\alpha}^{k_{\alpha}\cdot N_{\alpha}}\right)$ and $\displaystyle h' := \left(\prod_{\alpha \in M} \tau_{f(\alpha)}^{k'_{\alpha} \cdot N_{\alpha}}\right) \circ f = f \circ \left(\prod_{\alpha \in M} \tau_{\alpha}^{k'_{\alpha} \cdot N_{\alpha}}\right)$. By definition of $h$, $h'$ and $M$, we have that $h \restriction K = h' \restriction K = f \restriction K$.

 Suppose there exist $g,g',H \in \Gamma$ such that $g \circ h \circ g^{-1} \restriction g(S) = H \restriction g(S)$ and $g' \circ h' \circ (g')^{-1} \restriction g'(S) = H \restriction g'(S)$ up to isotopy. Then, we have the following facts for all $\alpha, \beta \in M$:

 \begin{enumerate}
  \item $H(g(\alpha)) = g \circ h \circ g^{-1} (g(\alpha)) = g \circ h (\alpha) = g \circ \tau_{f(\alpha)}^{k_{\alpha}\cdot N_{\alpha}} \circ f (\alpha) = g(f(\alpha))$.
  \item $H(g'(\beta)) = g' \circ h' \circ (g')^{-1} (g'(\beta)) = g' \circ h' (\beta) = g' \circ \tau_{f(\beta)}^{k'_{\beta} \cdot N_{\beta}} \circ f (\beta) = g'(f(\beta))$.
  \item On one hand, we have that:
  \begin{center}$\displaystyle \begin{array}{rcl}
  H \restriction N(g(\alpha)) & = & g \circ \tau_{f(\alpha)}^{k_{\alpha} \cdot N_{\alpha}} \circ f \circ g^{-1} \restriction N(g(\alpha))\\[0.2cm] & = & g \circ \tau_{f(\alpha)}^{k_{\alpha} \cdot N_{\alpha}} \circ \tau_{f(\alpha)}^{n(\alpha,f)} \circ b_{\alpha,f} \circ g^{-1} \restriction N(g(\alpha))\\[0.2cm] & = & g \circ \tau_{f(\alpha)}^{k_{\alpha} \cdot N_{\alpha} + n(\alpha,f)} \circ b_{\alpha,f} \circ g^{-1} \restriction N(g(\alpha))\\[0.2cm] & = & \tau_{g(f(\alpha))}^{k_{\alpha} \cdot N_{\alpha} + n(\alpha,f)} \circ g \circ b_{\alpha,f} \circ g^{-1} \restriction N(g(\alpha))\\[0.2cm] & = & \tau_{g(f(\alpha))}^{k_{\alpha} \cdot N_{\alpha} + n(\alpha,f)} \circ b_{g(\alpha),g \circ f \circ g^{-1}}
  \end{array}$\end{center}
  This means that in $N(g(\alpha))$, $H$ behaves as first mapping $N(g(\alpha))$ to $N(g(f(\alpha)))$ untwisted, and then twisting $N(g(f(\alpha)))$ strictly less than $N_{\alpha}$ times.
  \item On the other hand, we have that:
  \begin{center}$\displaystyle \begin{array}{rcl}
  H \restriction N(g'(\beta)) & = & g' \circ \tau_{f(\beta)}^{k'_{\beta} \cdot N_{\beta}} \circ f \circ (g')^{-1} \restriction N(g'(\beta))\\[0.2cm] & = & g' \circ \tau_{f(\beta)}^{k'_{\beta} \cdot N_{\beta}} \circ \tau_{f(\beta)}^{n(\beta,f)} \circ b_{\beta,f} \circ (g')^{-1} \restriction N(g'(\beta))\\[0.2cm] & = & g' \circ \tau_{f(\beta)}^{k'_{\beta} \cdot N_{\beta} + n(\beta,f)} \circ b_{\beta,f} \circ (g')^{-1} \restriction N(g'(\beta))\\[0.2cm] & = & \tau_{g'(f(\beta))}^{k'_{\beta} \cdot N_{\beta} + n(\beta,f)} \circ g' \circ b_{\beta,f} \circ (g')^{-1} \restriction N(g'(\beta))\\[0.2cm] & = & \tau_{g'(f(\beta))}^{k'_{\beta} \cdot N_{\beta} + n(\beta,f)} \circ b_{g'(\beta),g' \circ f \circ (g')^{-1}}
  \end{array}$\end{center}
  This means that in $N(g'(\beta))$, $H$ behaves as first mapping $N(g'(\beta))$ to $N(g'(f(\beta)))$ untwisted, and then twisting $N(g'(f(\beta)))$ strictly more than $N_{\beta}$ times.
 \end{enumerate}

 We now finish the proof by dividing it into two cases and reaching a contradiction in each~case:

 \textbf{Case 1}. If there exist $\alpha, \beta \in M$ with $g(\alpha) = g'(\beta)$ then $\alpha$ and $\beta$ are in the same $\Gamma$-orbit, which implies that $N_{\alpha} = N_{\beta}$. Also, by points (1) and (2) above, we have that $g(f(\alpha))$ is isotopic to $g'(f(\beta))$. Hence, using points (3) and (4), we reach a contradiction.

 \textbf{Case 2}. If there exist $\alpha, \beta \in M$ with $i(g(\alpha), g'(\beta)) \neq 0$ then $i(H(g(\alpha)), H(g'(\beta))) \neq 0$. By points (1) and (2) above, we have that:
 $$
 i(g(f(\alpha)), g'(f(\beta))) \neq 0.
 $$
 Therefore $N(g(f(\alpha)))$ and $N(g'(f(\beta)))$ intersect, even up to isotopy. Hence, we reach a contradiction by points (1) and (4) above, seeing that by (1) $H(g(\alpha)) = g(f(\alpha))$, while by~(4), we get that~$H$ sends subarcs of $g(\alpha)$ to the image of $g(f(\alpha))$ under a non-trivial power of $\tau_{g'(f(\beta))}$.
\end{proof}

 The following lemma can be thought of as a generalization of Theorem~\ref{Theo:NotJEP}.

\begin{lemma}\label{lemma:nondisp-nolocalGEP}
 Let $S$ be a $\Gamma$-non-displaceable subsurface of $\Sigma$ (not necessarily of finite topological type) and let $U \subset \Gamma$ be an open set. If $f \in U$, then there exist $h,h'\in U$ and a compact subsurface $K$ such that:
 \begin{enumerate}
  \item $h \restriction K = h' \restriction K = f \restriction K$ up to isotopy.
  \item There do not exist $g,g',H \in \Gamma$ with $g \circ h \circ g^{-1} \restriction g(S) = H \restriction g(S)$ and $g' \circ h' \circ (g')^{-1} \restriction g'(S) = H \restriction g'(S)$ up to isotopy.
 \end{enumerate}
\end{lemma}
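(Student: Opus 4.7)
The plan is to reduce to Lemma~\ref{lemma:DehnTwistTrickNWD} by choosing a suitable compact $K_{0} \subset S$ together with a multicurve $M \subset S \setminus K_{0}$, and then to use the explicit form of the homeomorphisms produced by that lemma to promote the resulting agreement with $f$ into membership in $U$. First, using the sub-base \eqref{EQ:Basic-permutation-topo}, I fix a finite set $A$ of essential simple closed curves in $\Sigma$ such that
\[
V := \{g \in \Gamma : g(a) = f(a) \text{ for every } a \in A\} \subset U,
\]
so the goal becomes to produce $h, h' \in V$ together with a compact subsurface $K$ satisfying (1) and (2).

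The core step is to construct a compact connected subsurface $K_{0} \subset S$ and a multicurve $M \subset S \setminus K_{0}$ with three properties: every element of $M$ is either an essential or a boundary curve of $S$; every element of $M$ is disjoint from every curve of $A$; and $M$ cannot be separated from itself by $\Gamma$ in the sense of Definition~\ref{DEF:Cannot-separate}. A natural candidate is to take $M$ as the union of $\partial S$ with a pants decomposition (or other sufficiently filling multicurve) of the complement in $S$ of a small regular neighborhood of $A \cap S$. The non-separability condition is where $\Gamma$-non-displaceability of $S$ enters decisively: for any $g, g' \in \Gamma$ we have $g(S) \cap g'(S) \neq \emptyset$ (apply non-displaceability to $(g')^{-1}g \in \Gamma$), and because $M$ fills $S$, the translates $g(M)$ and $g'(M)$ must contain either a common curve or a pair of curves meeting essentially. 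This is precisely the mechanism already used in Theorem~\ref{Theo:NotJEP}.

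With $(K_{0}, f, M, S)$ satisfying the hypotheses of Lemma~\ref{lemma:DehnTwistTrickNWD}, that lemma produces $h, h' \in \Gamma$ with $h \restriction K_{0} = h' \restriction K_{0} = f \restriction K_{0}$ up to isotopy, and satisfying the non-amalgamation conclusion (2) required here. Inspecting the proof of Lemma~\ref{lemma:DehnTwistTrickNWD}, $h$ and $h'$ are obtained from $f$ by post-composing with multitwists of the form $\prod_{\alpha \in M} \tau_{\alpha}^{k_{\alpha} N_{\alpha}}$ (with different exponents for $h$ and $h'$), and hence differ from $f$ only on small tubular neighborhoods of $M$. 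Since $M$ is disjoint from $A$, those tubular neighborhoods can be chosen disjoint from a compact tubular neighborhood $N(A)$ of $A$; thus $h(a) = h'(a) = f(a)$ for every $a \in A$, which puts $h, h'$ in $V \subset U$. Finally, letting $K$ be a compact subsurface containing $K_{0} \cup N(A)$ and lying outside the supports of the twists yields condition~(1).

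The main technical obstacle is the construction of $M$, and specifically the verification of the non-separability property while keeping $M$ disjoint from $A$. If $S$ has ample topological complexity relative to $A \cap S$, the straightforward choice $M = \partial S \cup P$ suffices; in degenerate cases in which $S$ has very small complexity, or in which $A$ consumes most of the available complexity of $S$, a preliminary adjustment is needed — either enlarging $S$ inside the ambient $\Gamma$-non-displaceable region, or refining the basic neighborhood $V$ to a smaller one — so that enough of the filling structure of $S$ survives in $S \setminus N(A)$ to support a multicurve meeting the non-separability requirement.
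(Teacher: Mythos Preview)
Your approach is essentially the paper's: pass to a basic neighborhood $V$ of $f$, choose $M=\partial S\cup P$ with $P$ a pants decomposition of $S$ minus a compact core, verify that $M$ cannot be separated from itself by $\Gamma$, and feed everything into Lemma~\ref{lemma:DehnTwistTrickNWD}. The one substantive gap is in the non-separability step. You write ``because $M$ fills $S$'', but $M$ does \emph{not} fill $S$: by construction it misses $K_{0}$ (your compact core containing $A\cap S$). So from $g(S)\cap g'(S)\neq\emptyset$ you cannot conclude directly that $g(M)$ and $g'(M)$ interact; the overlap of $g(S)$ and $g'(S)$ could a priori live entirely in $g(K_{0})$ or $g'(K_{0})$, where $M$ is absent.

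You notice this in your last paragraph and propose enlarging $S$, which is exactly what the paper does, but the paper supplies the missing quantitative ingredient: after arranging $\widetilde K\subset S$ (any superset of a $\Gamma$-non-displaceable surface is again $\Gamma$-non-displaceable, so enlarging $S$ is free), one further enlarges $S$ so that every component of $S\setminus\widetilde K$ has complexity strictly larger than the complexity of $\widetilde K$. This complexity comparison is what forces $g(S\setminus\widetilde K)\cap g'(S\setminus\widetilde K)\neq\emptyset$ once $g(S)\cap g'(S)\neq\emptyset$, and since $M=B\cup P$ genuinely fills $S\setminus\widetilde K$, the desired coincidence or essential intersection of curves in $g(M)$ and $g'(M)$ follows. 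Incorporating this step (and letting $S$ swallow \emph{all} of $A$, so that disjointness of $M$ from $A$ is automatic, not just from $A\cap S$) completes your argument along the same lines as the paper.
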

\begin{proof}
 Since $f \in U$, there exists a compact subsurface $K$ such that $$V := \{g \in \Gamma: g \restriction K = f \restriction K\},$$ is an open neighbourhood of $f$ contained in $U$.

 Moreover, due to Alexander's Lemma, there exists a finite-type subsurface $\widetilde{K}$ such that all the boundary curves of $\widetilde{K}$ are essential, and for all $g \in \Gamma$ we have that $g \restriction K = f \restriction K$ if and only if $g \restriction \widetilde{K} = f \restriction \widetilde{K}$ up to isotopy.

 Without loss of generality, we can assume that $S$ contains $\widetilde{K}$ such that every boundary curve of $\widetilde{K}$ is an essential curve in $S$. Furthermore, we possibly enlarge $S$ such that every connected component of $S \setminus \widetilde{K}$ has complexity larger than the complexity of $\widetilde{K}$ or the boundary of the component is contained in the boundary of $\widetilde{K}$.

 Now, let $B$ be the multicurve composed of all the boundary curves of $S$, $P$ be a pants decomposition of $S \setminus \widetilde{K}$, and $M = B \cup P$.
 Let $g,g' \in \Gamma$. Since $S$ is $\Gamma$-non-displaceable,~$g(S)$ and~$g'(S)$ have non-trivial intersection. By the complexity condition above, it follows that also $g(S \setminus \widetilde{K})$ and $g'(S \setminus \widetilde{K})$ have non-trivial intersection. This implies that there exist $\alpha \in g(M)$ and $\beta \in g'(M)$ such that $\alpha = \beta$ or $i(\alpha,\beta) \neq 0$.
 In particular, $M$ cannot be separated from itself by $\Gamma$. Thus, by Lemma \ref{lemma:DehnTwistTrickNWD}, there exist $[h], [h'] \in V \subset U$ that satisfy~(1) and~(2), finishing the proof.
\end{proof}

\emph{Proof of Theorem~\ref{THM:non-displaceable-the-nowhere-dense}.}
 Let $f \in W$. Then there exists a compact subsurface $K$ such that $U = \{g \in \Gamma: g\restriction K = f \restriction K\}$ is an open neighbourhood of $f$ contained in $W$.

 Let $h,h' \in U$ be the mapping classes obtained by Lemma \ref{lemma:nondisp-nolocalGEP}, and let $g, g' \in \Gamma$ be two different elements. We define: $$V_{1} = \{\widetilde{f} \in \Gamma: \widetilde{f} \restriction g(S) = g \circ h \circ g^{-1} \restriction g(S)\},$$ $$V_{2} = \{\widetilde{f} \in \Gamma: \widetilde{f} \restriction g'(S) = g' \circ h' \circ (g')^{-1} \restriction g'(S)\}.$$

 By construction, $V_{1}$ and $V_{2}$ are non-empty open sets contained in $U$.

 If there existed $f_1, f_2, H \in \Gamma$ such that $f_{i} \circ H \circ f_{i}^{-1} \in V_{i}$ for each $i = 1,2$, we would have that $f_{1}^{-1} \circ g$, $f_{2}^{-1} \circ g'$ and $H$ contradict property~(2) of Lemma \ref{lemma:nondisp-nolocalGEP}. Thus, any conjugacy class with non-empty intersection with $V_{i}$ is disjoint from $V_{j}$ for $i \neq j$.

 Since any element is contained in a conjugacy class, it is obvious that $V_{1}$ is disjoint from~$V_{2}$.
\qed

\subsection{Somewhere dense conjugacy classes}
In this subsection, we address the proof of Theorem~\ref{thm:Somewhere-dense}, characterizing thus when a big mapping class group has a conjugacy class which is dense somewhere.

\emph{Proof of Theorem~\ref{thm:Somewhere-dense}}.
We show first that, if $\MCG(\Sigma)$ has a conjugacy class which is dense somewhere then $\Sigma$ has no non-displaceable finite-type subsurfaces and $\mathcal{M}$, the set of maximal ends of $\Sigma$, consists of at most two maximal ends. If $\Sigma$ has a non-displaceable finite-type subsurface, then by Theorem~\ref{THM:non-displaceable-the-nowhere-dense}, all conjugacy classes of $\MCG(\Sigma)$ are nowhere dense. If there are no non-displaceable finite-type subsurfaces and $\mathcal{M}$ has cardinality strictly larger than $2$ then $\mathcal{M}$ is a Cantor set. This follows from Example 2.5 in~\cite{mann-rafi}: if $Z$ is a finite and $\MCG(\Sigma)$-invariant set of ends of cardinality at least $3$, then any subsurface $S$ which separates all the ends in $Z$ into different complimentary regions is non-displaceable. We claim that an infinite-type surface $\Sigma$ with a Cantor set of maximal ends cannot have a conjugacy class that is somewhere dense.
Let $U$ be an open set of the form $f\cdot U_A$ for some $f\in \MCG(\Sigma)$ and~$A$ a finite set of simple closed curves where $U_A \subset \MCG(\Sigma)$ is defined analogously to~(\ref{EQ:Basic-permutation-topo})). We can extend $A$ to a larger, more convenient set of curves and by this pass to an open subset of $U$: As $\mathcal{M}$ is a Cantor set, there exists a separating curve $\alpha \subset \Sigma$ such that $\Sigma\setminus\alpha$ has a connected component $C$ whose set of ends contains a (not necessarily proper) subset $\mathcal{M}'\subset\mathcal{M}$ homeomorphic to the Cantor set. Furthermore, we can choose $\alpha$ such that no element of $A$ is contained in $C$ or intersects $\alpha$.
We enlarge $A$ by this curve $\alpha$.

Addititionally, let $\beta_1,\beta_2$ be two separating curves in $C$ which together with $\alpha$ bound a pair of pants $P$ such that $\Sigma\setminus P$ induces a partition of $\mathcal{M}'=\mathcal{M}'_1\sqcup \mathcal{M}'_2$ such that $\mathcal{M}'_i$ is a Cantor set for $i=1,2$. Now define $h_2=\tau^2\circ f$ and $h_3=\tau^3\circ f$, where $\tau$ is supported on a small regular neighbourhood of $\{f(\beta_1),f(\beta_2)\}$ and acts as a Dehn twist on these. Given that $\{\alpha,\beta_1,\beta_2\}$ do not intersect any curve in $A$, we have that $h_2,h_3\in U$. Moreover, by considering $A'=A\sqcup\{\beta_1,\beta_2\}$, the restriction of $h_i$ to $A'$ defines an open subset~$U_i$ of $U$, for $i=2,3$. Indeed, this follows from the fact that $h_i$ restricted to $A$ coincides with~$f$.
Note that a conjugacy class which intersects $U_2$ cannot intersect $U_3$. Indeed, take $g\in U_2$. Then~$g$ must act as $\tau^2$ on $\{f(\beta_1),f(\beta_2)\}$. On the other hand, any conjugate of $g$ which lies in~$U_3$ must act on a neighbourhood of $f(\beta_1) \cup f(\beta_2)$ on one hand as $\tau^3$ and on the other as $\tau^2$ (see properties of Dehn twists listed in Section~\ref{SSEC:Dehn-Twists}). This is absurd.

We now show that if $\Sigma$ has no non-displaceable finite-type subsurfaces and $\mathcal{M}$ consists of at most two maximal ends, then there exists a conjugacy class which is somewhere dense. The hypothesis on non-displaceable subsurfaces is necessary: Figure~\ref{Fig:two-ends-nondisplaceable} shows an example for which $\Sigma$ has exactly two maximal ends and the surface has non-displaceable finite-type subsurfaces.

\begin{figure}
\begin{center}
    \begin{center}
     \begin{tikzpicture}[scale=0.3]
     \path[pattern color=blue!10, pattern=north east lines] (17.5,2) .. controls +(180:0.5cm) and +(180:0.5cm) .. (17.5,-2)
      -- (27.5,-2) .. controls +(0:0.4cm) and +(0:0.4cm) .. (27.5,2);
     \fill[color=white] (18.8,0.05) to[bend left] (21.2,0.05) -- (21.5,0.15) to[bend left] (18.5,0.15) -- (18.8,0.05);

     \draw (-2.5,-2) -- (47.5,-2);
     \draw (-2.5,2) -- (47.5,2);

     \foreach \g in {0,5,...,20}{
      \begin{scope}[xshift=\g cm]
       \draw (-1.2,0.05) to[bend left] (1.2,0.05);
       \draw (-1.5,0.15) to[bend right] (1.5,0.15);
      \end{scope}
      }

     \foreach \g in {25,30,...,45}{
      \begin{scope}[xshift=\g cm]
       \draw (-0.3,0.3) -- (0.3,-0.3);
       \draw (-0.3,-0.3) -- (0.3,0.3);
      \end{scope}
      }

     \draw (-2.5,2) .. controls +(180:0.5cm) and +(180:0.5cm) .. (-2.5,-2);
     \draw[dashed] (-2.5,2) .. controls +(0:0.4cm) and +(0:0.4cm) .. (-2.5,-2);
     \draw (-4,0) node{$\cdots$};

     \draw (47.5,2) .. controls +(180:0.5cm) and +(180:0.5cm) .. (47.5,-2);
     \draw (47.5,2) .. controls +(0:0.4cm) and +(0:0.4cm) .. (47.5,-2);
     \draw (49,0) node{$\cdots$};

     \draw[thick, color=blue] (17.5,2) .. controls +(180:0.5cm) and +(180:0.5cm) .. (17.5,-2);
     \draw[thick, dashed, color=blue] (17.5,2) .. controls +(0:0.4cm) and +(0:0.4cm) .. (17.5,-2);

     \draw[thick, color=blue] (27.5,2) .. controls +(180:0.5cm) and +(180:0.5cm) .. (27.5,-2);
     \draw[thick, dashed, color=blue] (27.5,2) .. controls +(0:0.4cm) and +(0:0.4cm) .. (27.5,-2);

     \draw[color=blue] (22.5,-1) node{$S$};
     \end{tikzpicture}
    \end{center}
    \caption{ }
	\label{Fig:two-ends-nondisplaceable}
\end{center}
\end{figure}

By Theorem~\ref{THM:Charact-Dense-Conjugacy-Classes-simple-version}, we can suppose that the set of maximal ends has exactly two elements, which we denote by $\mathcal{M}=\{x_0,x_1\}$. Let $A=\{a_0,a_1\}$ be a set of separating curves for which $\Sigma\setminus A = \Sigma_0\sqcup S_A\sqcup \Sigma_1$ satisfies that $x_i$ is an end of $\Sigma_i$, $i=0,1$, and $S_A$ is either a 3-punctured sphere or a 2-punctured torus. This set always exists because a surface with exactly two maximal ends and no non-displaceable finite-type subsurfaces has either infinite genus or genus zero and infinitely many isolated ends.
Let $U_A$ be the open set in $\MCG(\Sigma)$ defined analogously to~(\ref{EQ:Basic-permutation-topo}). If any two elements of $U_A$ satisfy \textbf{JEP}, then $\MCG(\Sigma)$ has a conjugacy class which is dense in $U_A$.
The proof of this fact is, \emph{mutatis mutandis}, the same as the proof of Theorem 2.1 in~\cite{kechris-rosendal}.

We now show that any two $h,h'\in U_A$ satisfy \textbf{GEP}.
Without loss of generality, we can suppose that there exist two subsurfaces $N(x_0), N(x_1) \subset \Sigma$ such that $N(x_0)^\ast$ and $N(x_1)^\ast$ are neighbourhoods of $x_0$ and $x_1$ in $E(\Sigma)$, respectively, and such that the supports of $h$ and $h'$ are contained in $\Sigma\setminus\overline{N(x_0)\sqcup N(x_1)}$.
Indeed, both $h$ and $h'$ leave each connected component in the decomposition $\Sigma\setminus A = \Sigma_0\sqcup S_A\sqcup \Sigma_1$ invariant. In particular, $h = h_0 \circ h_{S_A} \circ h_1$ and $h' = h'_0 \circ h'_{S_A} \circ h'_1$, where $h_i,h_i'$ and $h_{S_A},h'_{S_A}$ are homeomorphisms of $\Sigma_i$, $i\in\{0,1\}$, and $S_A$ respectively.
Even though $\Sigma_0$ and $\Sigma_1$ do not necessarily have a unique maximal end (for example, this is the case when $\Sigma$ is homeomorphic to Jacob's ladder), we can still apply the arguments used in the proof of Theorem~\ref{THM:Charact-Dense-Conjugacy-Classes} (more precisely, where we show that $\mathbf{(2)\Rightarrow(3)}$), to all elements of $\{h_i,h_i'\}_{i=0,1}$ and the claim on the supports of $h$ and $h'$ follows.

Let $S, S' \subset \Sigma$ be finite-type subsurfaces.
Since $\Sigma$ has no non-displaceable finite-type subsurfaces, there exist $g,g'\in\Homeo^+(\Sigma)$ such that the support of $f'=g' \circ h' \circ g'^{-1}$ is disjoint from $S$ and the support of $f = g \circ h \circ g^{-1}$ is disjoint from $f'(S')$ (recall that $\supp(g\circ h \circ g^{-1})=g(\supp(h))$). The rest of the proof is as for surfaces with just one maximal end: the map $H:=f\circ f'$ satisfies (\ref{EQ:Disjoint-support-case}) and therefore the pairs $(h,S)$ and $(h,S')$ satisfy \textbf{GEP}, see Lemma~\ref{lemma:GEP}.
\qed

\subsection{Closed subgroups of \texorpdfstring{$\pmap{\Sigma}$}{PMap(Sigma)}}
    \label{SSEC:Closed-Subgroups-MAP}

In this subsection, we prove Theorem~\ref{THM:CC_for_PMap}. Recall that $\pmap{\Sigma}$ denotes the subgroup of $\MCG(\Sigma)$ which fixes all elements of $E(\Sigma)$ and $\pmapc{\Sigma}$ denotes the subgroup generated by compactly supported mapping classes.

\begin{proof}[Proof of Theorem~\ref{THM:CC_for_PMap}]
 Suppose that $\Sigma$ has at least three different ends $x, y, z$. Then there exist separating curves $\alpha$ and $\beta$ such that $\alpha$ separates $x$ and $y$ from $z$, and $\beta$ separates $y$ and $z$ from $x$.

 If $f \in \Gamma \leq \pmap{\Sigma}$, we have that $f$ acts trivially on the space of ends of~$\Sigma$. In particular, $f(\alpha)$ also separates $x$ and $y$ from $z$. Analogously, if $g \in \Gamma$, we have that~$g(\beta)$ separates $y$ and $z$ from $x$.
 In the case that $f(\alpha)$ and $g(\beta))$ intersect, it follows with Lemma \ref{DehnTwistTrick} (the Dehn twist trick) that $\Gamma$ does not have a dense conjugacy class.
 Now we assume that $f(\alpha)$ and $g(\beta)$ are disjoint. Then we can find a curve $\gamma$ such that $f(\alpha)$, $g(\beta)$ and $\gamma$ bound a pair of pants (compare to Figure~\ref{Fig:Construction}). Then $\gamma$ separates $x$ and $z$ from $y$. Hence, the pair of pants is $\Gamma$-non-displaceable. Therefore, $\Gamma$ does not have a dense conjugacy class by Theorem~\ref{THM:non-displaceable-the-nowhere-dense}.

 Suppose now that $\Sigma$ has at most two different ends and is different from the Loch Ness Monster. Since $\Sigma$ has infinite topological type, it can only be homeomorphic to either the once-punctured Loch Ness Monster, or Jacob's Ladder. If $\Sigma$ is the once-punctured Loch Ness Monster then it has a finite-type non-displaceable subsurface and Corollary \ref{Coro:MapNotJEP} tells us that $\Gamma$ does not have a dense conjugacy class.

 If $\Sigma$ is Jacob's Ladder, by Corollary 6 in~\cite{Aramayona-Patel-Vlamis-2020}, we have that $\pmap{\Sigma} \cong \overline{\pmapc{\Sigma}} \rtimes \langle h \rangle$, where $h$ is the homeomorphism described in Figure~\ref{Fig:handle-shift} (\emph{a.k.a.}\ handle-shift). With this in mind, we aim to derive a contradiction and suppose that $\Gamma$ has a dense conjugacy class. Thus by Lemma \ref{lemma:GEP}, we have that $\Gamma$ satisfies \textbf{GEP}.

 \begin{figure}[!ht]
 \begin{center}
  \begin{tikzpicture}[scale=0.5]
  \draw (-2.5,-2) -- (27.5,-2);
  \draw (-2.5,2) -- (27.5,2);

  \foreach \g in {0,5,...,25}{
   \begin{scope}[xshift=\g cm]
    \draw (-1.2,0.05) to[bend left] (1.2,0.05);
    \draw (-1.5,0.15) to[bend right] (1.5,0.15);
   \end{scope}
   }

  \draw[->] (10,2.7) -- node[above]{$h$} (15,2.7);
  \end{tikzpicture}
 \end{center}
 \caption{A handle-shift.}
 \label{Fig:handle-shift}
 \end{figure}
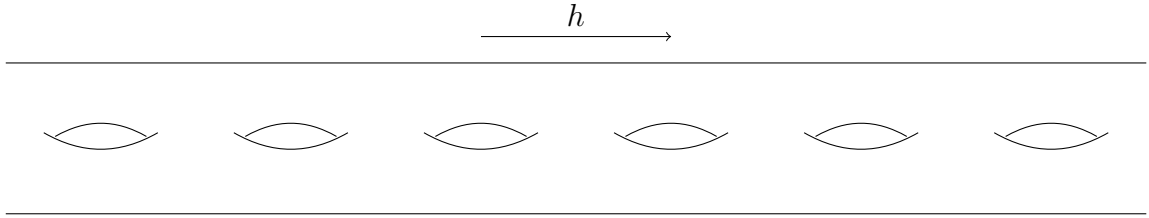

Now we divide the proof into two cases, depending whether $\Gamma$ contains a non-trivial power of $h$ or not.
 
 \textbf{Case 1}. $\Gamma \leq \overline{\pmapc{\Sigma}}$: Let $M_{1}$ be the multicurve depicted in Figure~\ref{Multicurve-M1}. Then, by Lemma \ref{DehnTwistTrick}, there exists $f \in \Gamma$ such that $f(\beta_{1})$ is disjoint and different from every element in $M_{1}$. This implies that $f(\beta_{1})$ and $\beta_{1}$ together bound a subsurface of positive genus. This implies that $f$ is not in $\pmapc{\Sigma}$ (see the proof of Proposition 6.3 in~\cite{Patel-Vlamis2018}). Moreover, since $\beta_1$ is a separating curve that separates the ends of $\Sigma$, we also have that $f = g \circ h^{n}$ with $g \in \overline{\pmapc{\Sigma}}$, $h$ the handle-shift defined above and $n \neq 0$ (again, see the proof of Proposition 6.3 in~\cite{Patel-Vlamis2018}). But this would imply that $h \in \overline{\pmapc{\Sigma}}$, in other words, that the handle-shift can be approximated by elements in $\pmapc{\Sigma}$. This contradicts Proposition~6.3 in~\cite{Patel-Vlamis2018}.

\begin{figure}[!ht]
    \begin{center}
     \begin{tikzpicture}[scale=0.5]
     \draw (-2.5,-2) -- (27.5,-2);
     \draw (-2.5,2) -- (27.5,2);

     \foreach \g in {0,5,...,25}{
      \begin{scope}[xshift=\g cm]
       \draw (-1.2,0.05) to[bend left] (1.2,0.05);
       \draw (-1.5,0.15) to[bend right] (1.5,0.15);
      \end{scope}
      }

     \draw[thick, color=blue] (2.1,2) .. controls +(180:0.5cm) and +(180:0.5cm) .. node[pos=0.75, left]{$\gamma_1$} (2.1,-2);
     \draw[thick, dashed, color=blue] (2.1,2) .. controls +(0:0.4cm) and +(0:0.4cm) .. (2.1,-2);

     \draw[thick, color=blue] (3,-2) .. controls +(180:0.2cm) and +(180:2cm) .. (4.2,1.5) -- (20.8,1.5) .. controls +(0:2cm) and +(0:0.2cm) .. (22,-2);
     \draw[thick, dashed, color=blue] (3,-2) .. controls +(0:0.5cm) and +(180:2.5cm) .. node[pos=0.25, right]{$\gamma_2$} (4.7,1) -- (20.3,1) .. controls +(0:2.5cm) and +(180:0.5cm) .. (22,-2);

     \draw[thick, color=blue] (22.9,2) .. controls +(180:0.5cm) and +(180:0.5cm) .. (22.9,-2);
     \draw[thick, dashed, color=blue] (22.9,2) .. controls +(0:0.4cm) and +(0:0.4cm) .. node[pos=0.75, right]{$\gamma_3$} (22.9,-2);

     \draw[thick, color=purple] (12.5,2) .. controls +(180:0.5cm) and +(180:0.5cm) .. node[pos=0.75, left]{$\beta_1$} (12.5,-2);
     \draw[thick, dashed, color=purple] (12.5,2) .. controls +(0:0.4cm) and +(0:0.4cm) .. (12.5,-2);
     \end{tikzpicture}
    \end{center}

    \caption{The multicurve $M_1= \{\gamma_1, \gamma_2, \gamma_3\}$.}
    \label{Multicurve-M1}
\end{figure}
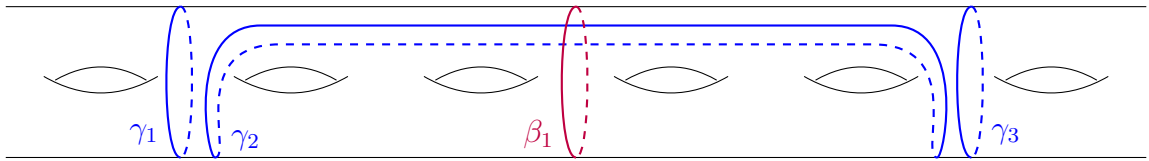

\textbf{Case 2}. $\Gamma$ is not a subgroup of $\overline{\pmapc{\Sigma}}$: Then there exists an $f_0\in \Gamma$ of the form $f_0=f\circ h^n\in\Gamma$, where $n\neq 0$ and $f\in \overline{\pmapc{\Sigma}}$. Let $\beta$, $S$ and $S'$ be as illustrated in Figure~\ref{Fig:Fig3-S}, and $h' = \tau_{\beta}^{N_{\beta}}$ (see Definition~\ref{Def:minpowerandmultitwist}). In particular, we can suppose that ${\supp(h')}\subset S$.
Since~$\Gamma$ satisfies \textbf{GEP}, there exist $g,g', H \in \Gamma$ such that $g \circ f_0 \circ g^{-1} \restriction g(S) = H \restriction g(S)$ and $\tau_{g'(\beta)}^{N_{\beta}}\restriction g'(S') = g' \circ h' \circ (g')^{-1} \restriction g'(S') = H \restriction g'(S')$. Then we have $f' \in \overline{\pmapc{\Sigma}}$ and $k \in \mathbb{Z}$ such that $H = f' \circ h^{k}$.
On one hand, we have that $\tau_{g'(\beta)}^{N_{\beta}}\restriction g'(S') = H \restriction g'(S')$ and hence $H(g'(S')) = g'(S')$.
Given that ${\supp(h')}\subset S$, $f'$ is not a handle-shift and $g'(S')$ separates the two ends of $\Sigma$, we have that $k = 0$. 
On the other hand, $g \circ f_0 \circ g^{-1}=g \circ f\circ h^n\circ g^{-1}\in\Gamma$, which implies that $k \neq 0$. Therefore, we reach a contradiction.

\begin{figure}[!ht]
\begin{center}
 \begin{tikzpicture}[scale=0.5]
  \path[pattern color=blue!10, pattern=north east lines]
  (2.1,2) .. controls +(180:0.5cm) and +(180:0.5cm) .. (2.1,-2)
  -- (22.9,-2) .. controls +(0:0.4cm) and +(0:0.4cm) .. (22.9,2);
  \path[pattern color=red!10, pattern=north west lines]
  (11.7,2) .. controls +(180:0.3cm) and +(180:0.3cm) .. (11.7,-2)
  -- (13.3,-2) .. controls +(0:0.2cm) and +(0:0.2cm) .. (13.3,2);

 \foreach \g in {0,5,10,15}{
 \begin{scope}[xshift=\g cm]
 \fill[color=white] (3.8,0.05) to[bend left] (6.2,0.05) -- (6.5,0.15) to[bend left] (3.5,0.15) -- (3.8,0.05);
 \end{scope}
 }

 \draw (-2.5,-2) -- (27.5,-2);
 \draw (-2.5,2) -- (27.5,2);

 \foreach \g in {0,5,...,25}{
  \begin{scope}[xshift=\g cm]
   \draw (-1.2,0.05) to[bend left] (1.2,0.05);
   \draw (-1.5,0.15) to[bend right] (1.5,0.15);
  \end{scope}
  }

 \draw[thick, color=blue] (2.1,2) .. controls +(180:0.5cm) and +(180:0.5cm) .. node[pos=0.75, left]{} (2.1,-2);
 \draw[thick, dashed, color=blue] (2.1,2) .. controls +(0:0.4cm) and +(0:0.4cm) .. (2.1,-2);

 \draw[thick, color=blue] (22.9,2) .. controls +(180:0.5cm) and +(180:0.5cm) .. (22.9,-2);
 \draw[thick, dashed, color=blue] (22.9,2) .. controls +(0:0.4cm) and +(0:0.4cm) .. node[pos=0.75, right]{} (22.9,-2);

 \draw[thick, color=purple] (12.5,2) .. controls +(180:0.3cm) and +(180:0.3cm) .. node[pos=0.75, right]{$\beta$} (12.5,-2);
 \draw[thick, dashed, color=purple] (12.5,2) .. controls +(0:0.2cm) and +(0:0.2cm) .. (12.5,-2);

 \foreach \g in {-0.8,0.8}{
 \begin{scope}[xshift=\g cm]
  \draw[thick, color=red] (12.5,2) .. controls +(180:0.3cm) and +(180:0.3cm) .. (12.5,-2);
  \draw[thick, dashed, color=red] (12.5,2) .. controls +(0:0.2cm) and +(0:0.2cm) .. (12.5,-2);
 \end{scope}
 }
 \draw[color=red] (12,1.5) node{$S'$};

 \draw[color=blue] (7.5,-1) node{$S$};
 \end{tikzpicture}
\end{center}

  \caption{}
 \label{Fig:Fig3-S}
\end{figure}

\end{proof}

\section{Miscellaneous results and open questions}
\label{SEC:questions}

In this section, we first discuss two properties of permutation groups which are necessary for ample generics: oligomorphy and Roelcke-precompactness. As we see, these fail for all big mapping class groups. Finally, we present a list of open questions.

\subsection{Oligomorphy and Roelcke-precompactness}
Recall that, when a group~$G$ acts on a set $X$, the action can be extended to the Cartesian product $X^n$ for any $n\in\Z_{>0}$ as $g\cdot(x_1,\ldots,x_n)=(gx_1,\ldots,gx_n)$. In this sense, the group $G$ is called \emph{oligomorphic} if the space of $G$-orbits in $X^n$ is finite for every $n\in\Z_{>0}$.

\begin{lemma}
  \label{lemma:no_olygomorphic}
Let $\Sigma$ be an infinite-type surface. Then the action of $\MCG(\Sigma)$ on the curve graph $C(\Sigma)$ is not oligomorphic.
\end{lemma}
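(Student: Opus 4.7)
The strategy is to defeat oligomorphy already at the level of pairs by producing infinitely many $\MCG(\Sigma)$-orbits on $C^0(\Sigma)^2$. The invariant that separates them will be the geometric intersection number $i(\cdot,\cdot)$: I aim to fix a curve $\alpha$ and build a sequence $\gamma_n \in C^0(\Sigma)$ with $i(\alpha,\gamma_n) \to \infty$, so that the ordered pairs $(\alpha,\gamma_n)$ lie in pairwise distinct orbits of the diagonal $\MCG(\Sigma)$-action on $C^0(\Sigma)^2$.

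First I would locate a compact finite-type subsurface $S \subset \Sigma$, essentially embedded and with essential boundary, carrying two essential simple closed curves $\alpha,\beta$ with $i(\alpha,\beta)\geq 1$. I split into cases according to the genus of $\Sigma$. If $\Sigma$ has positive genus, I embed a once-holed torus $S$ in $\Sigma$ and take $(\alpha,\beta)$ a standard pair with $i(\alpha,\beta)=1$. If instead $\Sigma$ has genus zero then, being of infinite type, $E(\Sigma)$ must be infinite (Section~\ref{subsection:ends}), so I can pick four ends lying in pairwise disjoint clopen subsets of $E(\Sigma)$ and cobound a four-holed sphere $S\subset\Sigma$ by four essential separating curves; inside such an $S$ one has essential curves with $i(\alpha,\beta)=2$. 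Because $S$ is essentially embedded, intersection numbers computed in $S$ agree with those computed in $\Sigma$.

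Next I set $\gamma_n := \tau_\beta^n(\alpha) \in C^0(\Sigma)$. On the once-holed torus, identifying curves with primitive vectors in $\Z^2$ and $\tau_\beta$ with the shear $(p,q)\mapsto(p,q+p)$, one reads off $\gamma_n=(1,n)$ and hence $i(\alpha,\gamma_n)=n$; the analogous picture on the four-holed sphere (which double-covers the one-holed torus) yields linear growth of $i(\alpha,\gamma_n)$ in $n$. Thus $\{i(\alpha,\gamma_n)\}_{n\in\N}$ is unbounded, and since $i(\cdot,\cdot)$ is a $\MCG(\Sigma)$-invariant, the pairs $(\alpha,\gamma_n)$ represent infinitely many distinct orbits in $C^0(\Sigma)^2$. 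The only point requiring some care is the genus-zero construction of $S$: one has to verify, using the Kerékjártó--Richards classification (Theorem~\ref{t:1.1}), that the four separating curves can be chosen pairwise disjoint and essential in $\Sigma$ and that the cobordism they cut off really is a four-holed sphere; this is the main (mild) obstacle, but it follows from the fact that infinitely many ends allow us to separate four of them by disjoint, genuinely essential curves.
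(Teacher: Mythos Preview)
Your argument is correct and follows essentially the same approach as the paper: exhibit infinitely many orbits on $C^0(\Sigma)^2$ by using geometric intersection number as an invariant and producing curves with unbounded intersection via iterated Dehn twists. Your case split to locate a subsurface carrying a pair $\alpha,\beta$ with $i(\alpha,\beta)>0$ is more explicit than the paper's, and your choice $\gamma_n=\tau_\beta^n(\alpha)$ (giving $i(\alpha,\gamma_n)=n\cdot i(\alpha,\beta)^2$) is in fact cleaner than the paper's $\beta_n=\tau_\alpha^n(\beta)$, which as written keeps $i(\alpha,\beta_n)=i(\alpha,\beta)$ constant.
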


\begin{proof}
For any surface $\Sigma$ of infinite type, there exist infinitely many different positive integers~$n$ and pairs $(\alpha,\beta_n)$ of essential simple closed curves such that the intersection number fulfills $i(\alpha,\beta_n) \geq n$. Indeed, it is sufficient to consider two non-isotopic essential simple closed curves $\alpha$ and $\beta$ whose geometric intersection is different from zero and define $\beta_n=\tau^n_\alpha(\beta)$, where $\tau_\alpha$ is the Dehn twist along $\alpha$. Homeomorphisms of $\Sigma$ preserve intersection number, thus  $(\alpha,\beta_n)$ are all in different $\MCG(\Sigma)$-orbits when considering the action on $C(\Sigma)\times C(\Sigma)$.
\end{proof}

A topological group $G$ is \emph{Roelcke precompact} if, for every neighbourhood~$V$ of the identity, there is a finite set $F\subseteq G$ such that $G=VFV$. This turns out to be equivalent to the following property: for all $n \in \Z_{>0}$ and all neighbourhoods $V$ of the identity in $G$, there is a finite set $F$ such that
$$
G^n=G \times  \ldots \times  G = V \cdot (FV \times \ldots \times FV).
$$

\begin{lemma}
  \label{lemma:no_roelcke_precompact}
Let $\Sigma$ be an infinite-type surface. Then $\MCG(\Sigma)$ is not Roelcke precompact.
\end{lemma}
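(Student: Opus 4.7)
The plan is to exhibit an open neighbourhood $V$ of the identity in $\MCG(\Sigma)$ for which no finite subset $F \subset \MCG(\Sigma)$ satisfies $\MCG(\Sigma) = VFV$. The key observation is that the geometric intersection number is a $\MCG(\Sigma)$-invariant of pairs of curves, and conjugation-like decompositions through a \emph{pointwise} stabilizer preserve it.

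First, I would fix two essential simple closed curves $\alpha, \beta \subset \Sigma$ with $i(\alpha, \beta) \neq 0$ (these exist on any infinite-type surface). Set $\beta_n := \tau_\alpha^n(\beta)$, where $\tau_\alpha$ is the Dehn twist about $\alpha$. The standard lower bound $i(\beta, \tau_\alpha^n(\beta)) \geq |n|\,i(\alpha,\beta)^2 - 2\,i(\alpha,\beta)$ (a consequence of the bigon criterion, see for example~\cite{FarbMargalit}) guarantees that $i(\beta, \beta_n) \to \infty$ as $|n| \to \infty$. This furnishes the unbounded geometric invariant that will witness the failure of Roelcke precompactness.

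Next I would choose $V := U_{\{\alpha,\beta\}}$, the pointwise stabilizer of $\{\alpha,\beta\}$, which is a basic open neighbourhood of the identity in the permutation topology described in~\eqref{EQ:Basic-permutation-topo}. Suppose towards a contradiction that $\MCG(\Sigma) = VFV$ for some finite $F \subset \MCG(\Sigma)$. Applying this to $g_n := \tau_\alpha^n$ for each $n$, we obtain a factorisation $g_n = v_1^{(n)}\, f^{(n)}\, v_2^{(n)}$ with $v_1^{(n)}, v_2^{(n)} \in V$ and $f^{(n)} \in F$. Since $v_2^{(n)}$ fixes both $\alpha$ and $\beta$ and $g_n(\beta) = \beta_n$, evaluating at $\beta$ gives
\[
v_1^{(n)}\!\left(f^{(n)}(\beta)\right) = \beta_n.
\]
Using that $v_1^{(n)}$ also fixes $\beta$ and that intersection numbers are preserved by mapping classes,
\[
i\!\left(\beta,\, f^{(n)}(\beta)\right) \;=\; i\!\left(v_1^{(n)}(\beta),\, v_1^{(n)}(f^{(n)}(\beta))\right) \;=\; i(\beta, \beta_n).
\]
Since $f^{(n)}$ ranges in the finite set $F$, the left-hand side takes only finitely many values, whereas the right-hand side tends to infinity, a contradiction.

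The only delicate points are the use of \emph{pointwise} (rather than setwise) stabilizers so that the $V$-factor on the left can be cancelled from the intersection computation, and the invocation of a standard intersection-number estimate for iterated Dehn twists to produce an unbounded sequence. Everything else is a bookkeeping exercise. I do not expect a substantial obstacle: the argument is essentially the observation that oligomorphy fails (Lemma~\ref{lemma:no_olygomorphic}) upgraded by tracking a numerical invariant, which is the standard route from non-oligomorphy to failure of Roelcke precompactness for closed subgroups of $S_\infty$.
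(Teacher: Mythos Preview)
Your proof is correct and follows essentially the same strategy as the paper: exploit that the intersection numbers $i(\beta,\tau_\alpha^n(\beta))$ are unbounded to contradict the finiteness forced by a Roelcke-type decomposition. The only cosmetic difference is that the paper takes $V=U_\beta$ and works with the equivalent $n=2$ reformulation of Roelcke precompactness (finitely many $G$-orbits on $(G\cdot\beta)^2$), whereas you take $V=U_{\{\alpha,\beta\}}$ and argue directly from $G=VFV$; the invariant and the contradiction are identical.
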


\begin{proof}
For simplicity, let us write $G=\MCG(\Sigma)$. Let $\beta$ be any essential simple closed curve and $V = U_\beta$ the stabilizer of the isotopy class of $\beta$ in $\MCG(\Sigma)$. Suppose that we can find a finite set $F$ such that $G \times G=V \cdot (FV\times FV)$. Then:
$$
\{ (f_i\beta, f_j\beta)  :   f_i,f_j\in F \}
$$
enumerates representatives for all orbits of $G$ in $(G\cdot \beta)\times (G\cdot \beta)$ (considering the diagonal action). This leads to a contradiction. Indeed, as seen in the proof of Lemma~\ref{lemma:no_olygomorphic}, for any surface $\Sigma$ of infinite type, there exist infinitely many pairs $(\alpha,\beta_n)$ of essential simple closed curves such that the intersection number fulfills $i(\alpha,\beta_n) \geq n$, and moreover $\beta_n$ can be chosen to be in the $G$-orbit of $\beta$. This implies that the set of $G$-orbits in $(G\cdot \beta)\times (G\cdot \beta)$ is infinite.
\end{proof}

\subsection{Questions}

Several of our main results (\emph{e.g.}\ Theorems~\ref{THM:All_CC_are_meager} and~\ref{THM:CC_for_PMap}) are valid for closed subgroups of $\MCG(\Sigma)$ having a lot of Dehn twists.

\begin{question}
Let $\{\alpha_i\}_{i\in I}$ be an infinite set of pairwise non-isotopic simple closed curves in $\Sigma$ such that $\Sigma\setminus\cup_{i\in I}\alpha_i$ is a disjoint union of discs and punctured discs (in other words $\{\alpha_i\}_{i\in I}$ \emph{fills} $\Sigma$). Let $\Sigma$ be a closed subgroup of $\MCG(\Sigma)$ which does not contain any (non-trivial) power of a Dehn twist w.r.t.\ a curve in $\{\alpha_i\}_{i\in I}$. Are all conjugacy classes of $\Gamma$ meager?
\end{question}

As discused in Remark~\ref{Remark:countable-one-max-end}, the set of genus zero surfaces with no non-displaceable finite-type subsurfaces and whose space of ends is countable and has a unique maximal end are in 1-1 correspondence with ordinals of the form $\omega^\alpha+1$, where $\alpha$ is a countable ordinal.

\begin{question}
Is it possible to list the elements of the set formed by all genus zero surfaces whose space of ends is uncountable and has a unique maximal end?
\end{question}

For $n\in\mathbb N$, let $\mathcal F_p^n$ denote the class of all $ S = \langle A,\psi_1 : B_1 \to C_1 , \dots , \psi_n : B_n \to C_n\rangle $, where $A,B_i,C_i \in\mathcal F$, $B_i, C_i \subseteq A$  and $\psi_i$ is an isomorphism between $B_i$ and $C_i$. Furthermore, we say that $ S = \langle A,\psi_i : B_i \to C_i, i=1\dots, n\rangle $ \emph{embeds into} $ S' = \langle A',\psi_i' : B_i' \to C_i', i=1\dots, n\rangle $ if there is an embedding $f:A\to A'$ such that $f$ embeds $B_i$ into~$B_i'$ and $C_i$ into~$C_i'$ and $f\circ\psi_i\subseteq \psi'_i\circ f$ for all $i=1\dots, n$. The following proposition summarizes the importance of the class $\mathcal F_p^n$.

\begin{proposition}~\cite{kechris-rosendal}
    Let $\mathcal F$ be a Fra\"{\i}ss\'e class with Fra\"{i}ss\'e limit $\mathbf{K}$. Then it holds:
\begin{enumerate}
\item $\Aut(\mathbf K)^n$ has a dense conjugacy class iff $\mathcal F_p^n$ satisfies {\bf JEP},
and
\item $\Aut(\mathbf K)$ has \emph{ample generics}, \emph{i.e.}\ $\Aut(\mathbf K)^n$ has a co-meager conjugacy class for every $n\in\mathbb N$, iff $\mathcal F_p^n$ satisfies
{\bf JEP} and {\bf WAP}.
\end{enumerate}
\end{proposition}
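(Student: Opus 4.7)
The plan is to extend the proof of Theorem~\ref{Theorem:Criteria} from the single partial-isomorphism class $\mathcal F_p$ to the $n$-tuple class $\mathcal F_p^n$. The conceptual starting point is that basic open sets for the diagonal conjugation action of $\Aut(\mathbf K)$ on $\Aut(\mathbf K)^n$ are naturally parametrized by elements of $\mathcal F_p^n$: each such open set has the form
\[
U_{\mathbf{S}} = \{(g_1, \ldots, g_n) \in \Aut(\mathbf K)^n : g_i \restriction B_i = \psi_i \text{ for } i = 1, \ldots, n\},
\]
where $\mathbf{S} = \langle A, (\psi_i : B_i \to C_i)_{i=1}^n\rangle \in \mathcal F_p^n$ with $A \subseteq \mathbf K$ (compare Remark~\ref{rmk:A=BUC}). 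Under this dictionary, the existence of a dense (respectively, comeager) conjugacy class becomes a uniform amalgamation statement for elements of $\mathcal F_p^n$.

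For part (1), in the direction $(\Leftarrow)$, given $U_1, U_2$ labelled by $\mathbf{S}_k = \langle A^{(k)}, (\psi_i^{(k)})_{i=1}^n\rangle$, I would apply \textbf{JEP} for $\mathcal F_p^n$ to obtain an amalgamation $\langle D, (\Phi_i)_{i=1}^n\rangle$ with embeddings $j_k : A^{(k)} \hookrightarrow D$, then use ultrahomogeneity to lift each $j_k$ to some $g_k \in \Aut(\mathbf K)$ and each $\Phi_i$ to some $H_i \in \Aut(\mathbf K)$. The identity $\Phi_i \circ j_k = j_k \circ \psi_i^{(k)}$ on $B_i^{(k)}$ then yields $g_k^{-1} H_i g_k \restriction B_i^{(k)} = \psi_i^{(k)}$, placing $(g_k^{-1} H_1 g_k, \ldots, g_k^{-1} H_n g_k) \in U_k$ for both $k = 1, 2$, so the diagonal orbit of $(H_1, \ldots, H_n)$ meets both open sets. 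For $(\Rightarrow)$, given a dense orbit of $(f_1, \ldots, f_n)$, meeting each $U_k$ supplies conjugators $h_k$, and a \textbf{JEP} witness is then built as $D = h_1^{-1}(A^{(1)}) \cup h_2^{-1}(A^{(2)}) \subseteq \mathbf K$ with partial isomorphisms $\theta_i = f_i \restriction (D \cap f_i^{-1}(D))$ and embeddings $j_k = h_k^{-1} \restriction A^{(k)}$, for which the condition $\theta_i \circ j_k = j_k \circ \psi_i^{(k)}$ follows directly from $h_k f_i h_k^{-1} \restriction B_i^{(k)} = \psi_i^{(k)}$.

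For part (2), the direction $(\Rightarrow)$ is immediate: ample generics produces for each $n$ a comeager, hence dense, orbit in $\Aut(\mathbf K)^n$, yielding \textbf{JEP} via part (1); the local extension property available at a comeager-orbit point translates, upon restriction to any basic open set, into the absorbing diagram defining \textbf{WAP} for $\mathcal F_p^n$. The direction $(\Leftarrow)$ follows the Banach--Mazur game analysis of~\cite{kechris-rosendal}: one plays on $\Aut(\mathbf K)^n$ with moves being nested basic open sets labelled by elements of $\mathcal F_p^n$, where \textbf{JEP} keeps the play nonempty and \textbf{WAP} provides a winning strategy whose limit forces all generic tuples into a single diagonal conjugacy class, which is therefore comeager. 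The passage from $n = 1$ to general $n$ introduces no genuinely new combinatorics, since $\mathcal F_p^n$ behaves like $\mathcal F_p$ for a signature-enriched version of $\mathbf K$ obtained by adding $n$ commuting unary operation symbols representing the $n$ components of an automorphism tuple.

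The main obstacle I expect is the careful bookkeeping in the fusion argument of part (2) $(\Leftarrow)$: one must ensure that the winning strategies supplied by \textbf{WAP} at countably many basic open sets can be assembled coherently into a single generic tuple whose orbit is comeager, while simultaneously respecting \textbf{JEP} to meet a dense family of targets. This is precisely the subtle Baire-category step already present in the proof of Theorem~\ref{Theorem:Criteria}(2) in~\cite{kechris-rosendal}, now with $n$ simultaneous partial isomorphisms tracked throughout the strategy rather than just one.
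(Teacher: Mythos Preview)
The paper does not supply its own proof of this proposition: it is stated with a bare citation to~\cite{kechris-rosendal} in Section~\ref{SEC:questions}, purely to motivate the final open question, and no argument is given. So there is nothing in the paper to compare your sketch against.

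That said, your outline is essentially the standard Kechris--Rosendal argument and is correct in spirit. Two small points. First, in part~(1) $(\Leftarrow)$ you should note explicitly that once you show every two nonempty basic open sets contain conjugate tuples, the existence of a \emph{single} dense orbit follows by a routine Baire-category argument (intersecting over a countable basis), not just that any two open sets meet a common orbit. Second, your closing remark about ``adding $n$ commuting unary operation symbols'' is slightly off: the $n$ automorphisms in a tuple need not commute, so the correct analogy is with $n$ independent (not commuting) unary function symbols. This is cosmetic and does not affect the actual argument, which you carry out by direct bookkeeping on $n$-tuples of partial isomorphisms rather than via any genuine signature change.
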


\begin{question}
For which infinite-type surfaces $\Sigma$ and $n\in\Z_{>0}$ does $\MCG(\Sigma)^n$ have a dense (or a somewhere dense) conjugacy class?
\end{question}

\bibliography{biblio}{}

\begin{thebibliography}{10}

\bibitem{Aramayona-Patel-Vlamis-2020}
Javier Aramayona, Priyam Patel, and Nicholas~G. Vlamis.
\newblock The first integral cohomology of pure mapping class groups.
\newblock {\em Int. Math. Res. Not. IMRN}, (22):8973--8996, 2020.

\bibitem{Bavard-Dowdall-Rafi}
Juliette Bavard, Spencer Dowdall, and Kasra Rafi.
\newblock Isomorphisms between big mapping class groups.
\newblock {\em Int. Math. Res. Not. IMRN}, (10):3084--3099, 2020.

\bibitem{BeckerKechris}
Howard Becker and Alexander~S. Kechris.
\newblock {\em The descriptive set theory of {P}olish group actions}, volume
  232 of {\em London Mathematical Society Lecture Note Series}.
\newblock Cambridge University Press, Cambridge, 1996.

\bibitem{FarbMargalit}
Benson Farb and Dan Margalit.
\newblock {\em A primer on mapping class groups}, volume~49 of {\em Princeton
  Mathematical Series}.
\newblock Princeton University Press, Princeton, NJ, 2012.

\bibitem{Ghys95}
\'{E}tienne Ghys.
\newblock Topologie des feuilles g\'{e}n\'{e}riques.
\newblock {\em Ann. of Math. (2)}, 141(2):387--422, 1995.

\bibitem{hernandez-etal}
Jes\'{u}s Hern\'{a}ndez~Hern\'{a}ndez, Israel Morales, and Ferr\'{a}n Valdez.
\newblock Isomorphisms between curve graphs of infinite-type surfaces are
  geometric.
\newblock {\em Rocky Mountain J. Math.}, 48(6):1887--1904, 2018.

\bibitem{hernandez-etal-2}
Jes\'{u}s Hern\'{a}ndez~Hern\'{a}ndez, Israel Morales, and Ferr\'{a}n Valdez.
\newblock The {A}lexander method for infinite-type surfaces.
\newblock {\em Michigan Math. J.}, 68(4):743--753, 2019.

\bibitem{Hu}
Wilfrid Hodges.
\newblock {\em Model theory}, volume~42 of {\em Encyclopedia of Mathematics and
  its Applications}.
\newblock Cambridge University Press, Cambridge, 1993.

\bibitem{kechris-rosendal}
Alexander~S. Kechris and Christian Rosendal.
\newblock Turbulence, amalgamation, and generic automorphisms of homogeneous
  structures.
\newblock {\em Proc. Lond. Math. Soc. (3)}, 94(2):302--350, 2007.

\bibitem{Lanier-Vlamis}
Justin Lanier and Nicholas~G. Vlamis.
\newblock Mapping class groups with the {R}okhlin property.
\newblock Preprint, arXiv:2105.11280.

\bibitem{Mann_note}
Kathryn Mann.
\newblock Automatic continuity for homeomorphism groups of noncompact
  manifolds.
\newblock Preprint, arXiv:2003.01173.

\bibitem{mann-rafi}
Kathryn Mann and Kasra Rafi.
\newblock Large scale geometry of big mapping class groups.
\newblock Preprint, arXiv:1912.10914.

\bibitem{Patel-Vlamis2018}
Priyam Patel and Nicholas~G. Vlamis.
\newblock Algebraic and topological properties of big mapping class groups.
\newblock {\em Algebr. Geom. Topol.}, 18(7):4109--4142, 2018.

\bibitem{Sullivan81}
Anthony Phillips and Dennis Sullivan.
\newblock Geometry of leaves.
\newblock {\em Topology}, 20(2):209--218, 1981.

\bibitem{Raymond60}
Frank Raymond.
\newblock The end point compactification of manifolds.
\newblock {\em Pacific J. Math.}, 10:947--963, 1960.

\bibitem{Richards63}
Ian Richards.
\newblock On the classification of noncompact surfaces.
\newblock {\em Trans. Amer. Math. Soc.}, 106:259--269, 1963.

\bibitem{truss}
J.~K. Truss.
\newblock Generic automorphisms of homogeneous structures.
\newblock {\em Proc. London Math. Soc. (3)}, 65(1):121--141, 1992.

\bibitem{Vlamis_note}
Nicholas~G. Vlamis.
\newblock Notes on the topology of mapping class groups.
\newblock preprint.

\bibitem{Kerekjarto23}
B.~von Ker\'ekj\'art\'o.
\newblock {\em Vorlesungen \"uber Topologie, 1: Fl\"achentopologie}, volume~8
  of {\em Die Grundlehren der mathematischen Wissenschaften in
  Einzeldarstellungen}.
\newblock Springer, 1923.

\end{thebibliography}
\bibliographystyle{plain}

\end{document}